\newcommand\eps\varepsilon
\newcommand\hbbeta{\bm{\hat{\beta}}}
\newcommand\tbbeta{\bm{\tilde{\beta}}}
\DeclareMathOperator{\tr}{tr}
\DeclareMathOperator{\prox}{prox}
\DeclareMathOperator{\E}{\mathbb{E}}
\DeclareMathOperator{\PP}{\mathbb{P}}
\DeclareMathOperator*{\argmin}{arg\,min}
\DeclareMathOperator{\oper}{op}
\DeclareMathOperator{\F}{F}
\newcommand{\Op}{\mathcal{O}_{\mathrm{p}}}
\newcommand{\op}{o_{\mathrm{p}}}
\newcommand{\by}{\bm y}
\newcommand{\bX}{\bm X}
\newcommand{\bN}{\bm N}
\newcommand{\bM}{\bm M}
\newcommand{\bV}{\bm V}
\newcommand{\bD}{\bm D}
\newcommand{\bW}{\bm W}
\newcommand{\bP}{\bm P}
\newcommand{\bQ}{\bm Q}
\newcommand{\bI}{\bm I}
\newcommand{\bA}{\bm A}
\newcommand{\mL}{\mathcal{L}}
\newcommand{\bbeta}{\bm \beta}
\newcommand{\R}{\mathbb{R}}
\newcommand{\bx}{\bm x}
\newcommand{\bpsi}{\bm \psi}
\newcommand{\tbpsi}{\bm{\tilde{\psi}}}
\newcommand{\bh}{\bm h}
\newcommand{\bz}{\bm z}
\newcommand{\be}{\bm e}
\newcommand{\bb}{\bm b}
\newcommand{\bv}{\bm{v}}
\newcommand{\bu}{\bm{u}}
\newcommand{\bw}{\bm{w}}
\newcommand{\bg}{\bm{g}}
\newcommand{\q}{q}
\newcommand{\hbb}{\hat{\bb}}
\numberwithin{equation}{section}
\newtheorem{theorem}{Theorem}[section]
\newaliascnt{proposition}{theorem}
\newtheorem{proposition}[proposition]{Proposition}
\newaliascnt{lemma}{theorem}
\newtheorem{lemma}[lemma]{Lemma}
\newaliascnt{remark}{theorem}
\newtheorem{remark}[remark]{Remark}
\newaliascnt{assumption}{theorem}
\newtheorem{assumption}[assumption]{Assumption}
\begin{document}

\title{Asymptotics of resampling without replacement in robust and
logistic regression}



\author{
Pierre C. Bellec\thanks{Department of Statistics, Rutgers University. 
Email: \href{mailto:pierre.bellec@rutgers.edu}{pierre.bellec@rutgers.edu}}
\and
Takuya Koriyama\thanks{Booth School of Business, University of Chicago. 
Email: \href{mailto:tkoriyam@uchicago.edu}{tkoriyam@uchicago.edu}}
}

\date{\today}
\maketitle

\begin{abstract}
  This paper studies the asymptotics of resampling without
  replacement in the proportional regime where dimension $p$
  and sample size $n$ are of the same order.
  For a given dataset $(\bX,\by)\in\R^{n\times p}\times \R^n$
  and fixed subsample ratio $\q\in(0,1)$,
  the practitioner samples independently of $(\bX,\by)$
  iid subsets $I_1,...,I_M$ of $\{1,...,n\}$
  of size $\q n$
  and trains estimators $\hbb(I_1),...,\hbb(I_M)$
  on the corresponding subsets of rows of $(\bX,\by)$.
  Understanding the performance of the bagged estimate
  $\bar\bb = M^{-1}\sum_{m=1}^M \hbb(I_m)$, for instance
  its squared error, requires
  us to understand correlations between two distinct
  $\hbb(I_m)$ and $\hbb(I_{m'})$ trained on different
  subsets $I_m$ and $I_{m'}$.
  
  In robust linear regression and logistic regression, we characterize
  the limit in probability of the correlation between two
  estimates trained on different subsets of the data.
  The limit is characterized as the unique solution of a
  simple nonlinear equation. We further provide data-driven estimators
  that are consistent for estimating this limit. These estimators
  of the limiting correlation allow us to estimate the squared error
  of the bagged estimate $\bar\bb$, and for instance perform
  parameter tuning to choose the optimal subsample ratio $\q$.
  As a by-product of the proof argument, we obtain the limiting
  distribution of the bivariate pair $(\bx_i^T \hbb(I_m), \bx_i^T \hbb(I_{m'}))$ for observations $i\in I_m\cap I_{m'}$, i.e., for observations
  used to train both estimates.
\end{abstract}

\maketitle

\tableofcontents

\section{Introduction}
\label{s:intro}

This paper studies the performance of bagging estimators
trained on subsampled, overlapping datasets in the context
robust linear regression and logistic regression. 

\subsection{M-estimation in the proportional regime}
We consider an M-estimation problem in the proportional
regime where sample size \( n \) and dimension \( p \) are of the same order:
Throughout the paper, \( \delta > 1 \) is a fixed constant and
the ratio
\begin{equation}
    n/p = \delta
    \label{regime}
\end{equation}
is held fixed as \( n,p\to+\infty \) simultaneously.
The practitioner collects data \( (y_i,\bx_i)_{i\in[n]} \)
with scalar-valued responses \( y_i \) and feature vectors \( \bm x_i \in \R^p \). For a given subset of observations \( I\subset [n] \),
an estimator \( \hat{\bb}(I) \) is trained on the subset of
observations \( (y_i,\bx_i)_{i\in I} \) using an optimization problem
of the form
\begin{equation}
    \hat{\bb}(I) = \argmin_{\bb \in \R^p} \sum_{i\in I} {\ell}_{y_i}(\bx_i^\top \bb) 
    \label{hbbI_intro}
\end{equation}
where for each \( i\in[n] \), the loss
\( {\ell}_{y_i}(\cdot)  \) is convex and depends implicitly on the response \( y_i \).
We will focus on two regression settings:
robust linear regression and Generalized Linear Models (GLM),
including logistic regression.
In robust regression, the response is of the form 
\begin{equation}
    \label{y_i_robust}
    y_i = \bm x_i^T \bbeta^* + \eps_i
\end{equation}
for some possibly heavy-tailed noise $\eps_i$ independent of $\bx_i$.
In this case the loss ${\ell}_{y_i}$ in \eqref{hbbI_intro}
is given by
\begin{equation}
    {\ell}_{y_i}(u) = \rho(y_i-u)
    \label{ell_i_robust}
\end{equation}
where $\rho$ is a deterministic function, for instance
the Huber loss $\rho (u) = \int_0^{|u|}\min(1,t)dt$
or its smooth variants, e.g., $\rho(u)=\sqrt{1+u^2}$.
The asymptotics of the performance of \eqref{hbbI_intro}
with $I=\{1,...,n\}$ and the loss \eqref{ell_i_robust} in robust regression
in the proportional regime \eqref{regime} are now well understood
\cite{el2013robust,donoho2016high,el2018impact,thrampoulidis2018precise}
as we will review in \Cref{sec:robust}.
A typical example of GLM to which our results apply is the case of 
binary logistic regression,
where \( {\ell}_{y_i} \) in \eqref{hbbI_intro} is the negative log-likelihood  
\begin{equation}
    {\ell}_{y_i}(u) = \log(1+e^u) - u y_i,
    \qquad y_i\in \{0,1\}
    \label{logi_loss}
\end{equation}
which is now also well understood for $I=[n]$ in \eqref{hbbI_intro} \cite{sur2018modern,candes2020phase}. Related results will be reviewed
in \Cref{sec:logistic}.
The goal of the present paper is to study the performance
of bagging several estimators of the form \eqref{hbbI_intro}
obtained from several subsampled datasets $I_1,...,I_M$. \\

\subsection{Bagging estimators trained on subsampled datasets without replacement}

Let \( M>0 \) be a fixed integer, held fixed as \( n,p\to+\infty \).
The practitioner then samples \( M \) subsets of \( [n] \)
according to the uniform distribution on all subsets of \( [n] \)
of size \( \q n \) for some $q\in (0,1]$, that is, 
\begin{equation}
    I_1,...,I_M
    \sim^{\text{iid}} \text{Unif}\{I\subset [n] : |I|=\q n \}.
    \label{I_1_I_M}
\end{equation}
Each \( I_m \) thus samples a subset of \( [n] \) of size \( \q n \)
without replacement 
 and the set of indices \( I_1,...,I_M \)
are all independent. Throughout this paper, we will refer this procedure as \textit{sampling without replacement}. 
While the set of indices are independent, the corresponding 
subsampled datasets $(\bx_i, y_i)_{i \in I_m}$ and $(\bx_i, y_i)_{i \in I_{m'}}$ 
are not independent as soon as there is some overlap in the sense
\( I_m\cap I_{m'} \ne \emptyset \).

\begin{remark}\label{rm:gepmetric_dist}
If \( I_m \) and \( I_{m'} \) are independent according
to \eqref{I_1_I_M} then \( |I_m \cap I_{m'}| \) follows a hyper-geometric
distribution with mean \( q^2 n \), and by Chebychev's inequality
using the explicit formula for the variance of hyper-geometric distributions,
\(|I_m \cap I_{m'}|/n \to^P q^2  \) as \( n\to+\infty \) while \( \q \)
is held fixed. Thus, not only is the intersection non-empty 
with high-probability, but it is of order \( n \).
\end{remark}
The goal of the paper is to understand the performance
of bagging the corresponding subsampled estimates:
with the notation \( \hbb(I) \) in \eqref{hbbI_intro}
and \( I_1,...,I_M \) in \eqref{I_1_I_M}, the practitioner
constructs the bagged estimate
\begin{equation}
    \label{bagging_intro}
    \bar\bb=
    \frac1 M \sum_{m=1}^M \hbb(I_m).
\end{equation}

\subsection{Related work}
In the proportional regime \eqref{regime}, 
\cite{peter1995learning, anders1997statistical} derived the limiting generalization error for ensembles of estimators $\hat{\bb}(I_m)$ whose distribution follows a Gibbs measure proportional to $\exp(-\mathcal{L}_{I_m}(\bb;\lambda)/T)$, where $T>0$ is the temperature parameter and $\mathcal{L}_{I_m}(\bb;\lambda)$ denotes the $\ell_2$-regularized empirical risk: $\mathcal{L}_{I_m}(\bb;\lambda) = \sum_{i\in I_m} (y_i - \bx_i^\top \bb)^2 + \lambda \|\bb\|_2^2$. Based on this result, they showed via numerical simulations that for a fixed temperature $T>0$, the ensemble estimator with a fixed regularization level $\lambda>0$ and optimally tuned subsample size $|I|$ can achieve strictly lower generalization error than a single estimator $\hat{\bb}([n])$ trained on the full dataset with an optimally tuned regularization parameter. 
Bagging as a generally applicable principle was introduced
in \cite{breiman1996bagging,breiman2001using} and early analysis in low-dimensional regimes 
were performed in \cite{buhlmann2002analyzing} among others.
In the proportional regime \eqref{regime}, 
\cite{lejeune2020implicit} demonstrated the role of bagging
as an implicit regularization technique when the base learners
$\hbb(I_m)$ are least-squares estimates.
Bagging Ridge estimators was studied in \cite{du2023subsample,patil2023bagging}
who characterized the limit of the squared error of \eqref{bagging_intro}
using random matrix theory.
The implicit regularization power of bagging
in the proportional regime is again seen in 
\cite{patil2023bagging,du2023subsample}, where it is shown that the optimal risk among Ridge estimates
can also be achieved by bagging Ridgeless estimates and optimally choosing the subsample size.
Estimating the risk of a bagged estimate such as \eqref{bagging_intro}
for regularized least-squares estimates is done in
\cite{patil2023bagging,du2023subsample,bellec2023corrected}.
The risk of bagging random-features estimators, trained on the full dataset
but with each base learner having independent weights within the random
feature activations, is characterized in \cite{loureiro2022fluctuations}.
Most recently, \cite{clarte2024analysis} studied the limiting equations of
several resampling schemes including bootstrap and resampling without
replacement, and characterized self-consistent equations for the limiting risk
of estimators obtained by minimization of the negative log-likelihood and an
additive Ridge penalty. However, the specific nonlinear systems we study (\eqref{eta_equation_robust} and \eqref{eta_equation_logi}) do not explicitly appear in their work, which instead focuses on bias and variance functionals associated with particular resampling strategies. The results in \cite{clarte2024analysis} build on the general AMP framework and the state evolution analysis developed in \cite[Lemmas B.3 and B.5]{loureiro2022fluctuations}, extending the foundational work of \cite{bayati2011lasso}. Their approach relies on the existence and uniqueness of solutions to the limiting system of equations, which is guaranteed under strong convexity assumptions (e.g., with a Ridge penalty) but was not established in the case without such an assumption until the present paper appeared.

\subsection*{Organization}
We will first study and state our main results for robust regression
in \Cref{sec:robust}.
\Cref{sec:logistic} extends the results to logistic regression.
Numerical simulations are provided in \Cref{sec:simu_robust} in robust
regression and in \Cref{subsec:simu_logistic} in logistic regression.
The main results are proved in
\Cref{sec:proof} simultaneously
for robust linear regression 
and logistic regression.
\Cref{sec:auxilliary_lemma} contains several auxiliary
lemmas used in the proof in \Cref{sec:proof}.

\subsection*{Notation}
For vectors $\|\cdot\|$ or $\|\cdot\|_2$ is the Euclidean norm, while $\|\cdot\|_{\oper}$ and $\|\cdot\|_{\F}$ denote the operator norm
and Frobenius norm of matrices.
The arrow $\to^P$ denotes convergence in probability
and $\op(1)$ denotes any sequence of random variables
converging to 0 in probability.
The stochastically bounded notation $\Op(r_n)$ for $r_n>0$ denotes
a sequence of random variables such that for any $\eta>0$, there exists $K>0$
with $\PP(\Op(r_n)>K r_n)\le \eta$.

\section{Robust regression}
\label{sec:robust}

This section focuses on robust regression in the linear model
\eqref{y_i_robust}, where the noise variables $\eps_i$
are possibly heavy-tailed. Throughout the paper, our working assumption
for the robust linear regression setting is the following.
\begin{assumption}
    \label{assum:robust}
    Let $\q\in(0,1),\delta>0$ be constants such that $\q\delta > 1$
    and $n/p = \delta$ as $n,p\to+\infty$.
    Let $\bbeta^*\in\R^p$.
    Assume that $(\bx_i,y_i)_{i\in[n]}$ are iid with
    $y_i = \bx_i^T\bbeta^* + \eps_i$ and $\eps_i$ independent of $\bx_i\sim N(\bm{0}_p,\bI_p)$ {satisfying $\PP(\eps_i\ne 0)> 0$.}
    Assume that the loss is
    ${\ell}_{y_i}(u)= \rho(y_i - u)$ for a twice-continuously differentiable
    function $\rho$ {with $\argmin_{x\in\R}\rho(x)=\{0\}$}
    as well as $|\rho'(t)|\le 1$ and $0<\rho''(t)\le 1$
    for all $t\in\R$.
\end{assumption}

Robust loss functions that meets \Cref{assum:robust} include the pseudo-Huber loss $\rho(t) = \sqrt{1 + t^2}$ and its scaled variant $\rho_\lambda(t) = \{\lambda^2/(1+\lambda)\} \cdot \rho(t / \lambda)$ for any $\lambda>0$. In contrast, the standard Huber loss $\rho(t) = \int_0^{|t|} \min(1, x) dx$ does not meet the requirement $\inf_{t\in\mathbb{R}}\rho''(t) > 0$ imposed in \Cref{assum:robust}. 

Nevertheless, we emphasize that the most essential and fundamental condition on the robust loss function $\rho$ is the Lipschitz continuity, namely, $\sup_{t\in \mathbb{R}}|\rho'(t)|\le 1$. Indeed, an unregularized M-estimator fitted by a Lipschitz convex loss has a finite risk limit for any noise distribution, while for any non-Lipschitz convex loss function, there exists a heavy-tailed noise under which the risk diverges (see Section 2 and Proposition E.2 in \cite{koriyama2023analysis}). 
On the other hand, the condition $\inf_{t\in\mathbb{R}}\rho''(t)>0$ is primarily an artifact of our proof technique, and we verify by numerical simulation that our main theorem holds for the Huber loss (see \Cref{sec:simu_robust}). We expect that the condition $\inf_{t\in\mathbb{R}} \rho''(t)>0$ can be relaxed, by a smoothing argument that adds a vanishing Ridge penalty term to the
optimization problem \eqref{hbbI_intro}, 
as explained in \cite[Section 1.3]{bellec2023error} and \cite[Section B.2.1]{koriyama2024precise}.

With $|I| = qn$ and $\delta = n/p$, the assumption $q\delta (=|I|/p) >1$ is necessary for the unregularized M-estimator $\hat\bb (I) \in \argmin_{\bb\in \R^p} \sum_{i\in I} \rho(y_i-\bx_i^\top\bb)$ to be well-defined. 
The condition $\PP(\eps_i \ne 0) > 0$ is assumed to avoid the trivial case where the perfect recovery $\hat{\bb}(I)=\bbeta_*$ holds with probability $1$. 
Indeed, if $\PP(\eps_i \ne 0)=0$, then combined with $\{0\}=\argmin_x\rho(x)$ for the convex loss $\rho$, this gives 
$\rho'(\eps_i)=0$ for all $i\in I$ with probability $1$, so that  $\sum_{i\in I} \bx_i \rho'(\eps_i)=\bm{0}_p$ with probability $1$. 
By the KKT condition for the unregularized M-estimator, this means $\hat{\bb}(I) = \bbeta_*$ with probability $1$.

\subsection{A review of existing results in robust linear regression}
\label{s:intro_robust}
The seminal works \cite{donoho2016high,el2013robust,karoui2013asymptotic,el2018impact} characterized the performance of robust M-estimation
in the proportional regime \eqref{regime}.
For a convex loss $\rho:\R\to\R$ and ${\ell}_{y_i}$ as in \Cref{assum:robust},
these works characterized the limiting squared risk
$\|\hbb(\{1,...,n\})-\bbeta^*\|^2$
of an estimator $\hbb(\{1,...,n\})$,
trained on the full dataset, i.e.,
taking $I=\{1,...,n\}$ in \eqref{hbbI_intro}.
In particular, 
\cite{donoho2016high,el2013robust,karoui2013asymptotic,el2018impact,thrampoulidis2018precise} show that under the design of $(\bx_i,y_i)$ given in
\Cref{assum:robust}, the squared risk of $\hbb(\{1,...,n\})$
converges in probability to a constant, and this constant is found
by solving a system of two nonlinear equations with two unknowns.
If a subset $I\subset [n]$ of size $|I|=\q n$ is used to train
\eqref{hbbI_intro}, simply changing $\delta=n/p$ to $\delta\q=|I|/p$,
these results imply the convergence in probability
$\|\hbb(I)-\bbeta^*\|^2\to^P \sigma^2$ where $(\sigma,\gamma)$
is the solution to the system
\begin{align}
    \label{Robust1}
    \tfrac{\sigma^2}{\delta\q}&= \E[
    (\sigma G - \prox[\gamma{\ell}_{y}](\sigma G))^2
    ]
    \\
    1
    -
    \tfrac{1}{\delta\q} 
    &= 
    \sigma^{-1}
        \E[G
    \prox[\gamma{\ell}_y](\sigma G)
    ]
    \label{Robust2}
\end{align}
where \( G \sim N(0,1) \) is independent of ${y}$ and $y=^d y_i$, i.e., $y$ follows the same distribution as any marginal of the response vector $\by = (y_i)_{i \in [n]}$. 
Above, \( \prox[f](x_0)=\argmin_{x\in\R}(x_0-x)^2/2 + f(x) \) denotes the proximal operator of a convex function
\( f \) for any \( x_0\in \R \).
The system \eqref{Robust1}-\eqref{Robust2} was predicted
in \cite{el2013robust} using a heuristic leave-one-out argument.
Early rigorous results \cite{donoho2016high,karoui2013asymptotic,el2018impact} assumed either $\rho$ is strongly convex (\cite{donoho2016high})
or added an additive strongly convex Ridge penalty to the M-estimation problem
(\cite{karoui2013asymptotic,el2018impact});
\cite{thrampoulidis2018precise} generalized such results
without strong convexity.

We now subsample without replacement, obtaining iid subsets
$I_1,...,I_M$ as in \eqref{I_1_I_M}.
For each $m=1,...,M$ the theory above applies individually
to $\hbb(I_m)$. In particular $\|\hbb(I_m)-\bbeta^*\|^2\to^P \sigma^2$.
By expanding the square, the squared L2 error of the average
$\bar\bb$ in \eqref{bagging_intro}  is given by
\begin{equation}
    \|\bar\bb-\bbeta^*\|^2
    =
    \frac{1}{M^2}
    \sum_{m=1}^M
    \|\hbb(I_m)-\bbeta^*\|^2
    +
    \frac{1}{M^2}
    \sum_{m=1}^M
    \sum_{m'=1: m'\ne m}^M
    (\hbb(I_m)-\bbeta^*)^T
    (\hbb(I_{m'})-\bbeta^*).
    \label{risk_bagged}
\end{equation}
Since previous works established that 
$\|\hbb(I_m)-\bbeta^*\|^2\to^P \sigma^2$, the first term above
is clearly $\sigma^2/M$. The crux of the problem is thus
to characterize
the limit in probability, if any,
of each term 
$(\hbb(I_m)-\bbeta^*)^T
(\hbb(I_{m'})-\bbeta^*)$
in the second term inside the double sum.

\subsection{A glance at our results}
Since $\rho$ in \eqref{ell_i_robust} is Lipschitz and differentiable,
the system \eqref{Robust1}-\eqref{Robust2}
admits a unique solution (\cite{koriyama2023analysis}). 
Let \( (\sigma,\gamma ) \) be the solution
to this system (since only the solution to \eqref{Robust1}-\eqref{Robust2} is of interest,
we denote its solution by \( (\sigma,\gamma) \) without extra subscripts
for brevity). 

The key to understanding the performance of the aforementioned
bagging procedure \eqref{bagging_intro} and, for instance,
characterizing the limits of $\| \bar\bb - \bbeta^*\|^2$,
is the following equation
with unknown \( \eta\in[-1,1] \):
\begin{align}
\eta&=\frac{{\q^2}\delta}{\sigma^2}
        \E\Bigl[
        \Bigl(\sigma G - \prox[\gamma {\ell}_{y}](\sigma G)\Bigr)
        \Bigl(\sigma G - \prox[\gamma {\ell}_y](\sigma \tilde G)\Bigr)
        \Bigr]
\label{eta_equation_robust}, \quad 
\begin{pmatrix}
    G \\
    \tilde G
\end{pmatrix}
\sim N\Bigl(0_2,
\begin{pmatrix}
    1 & \eta \\
    \eta & 1
\end{pmatrix}
\Bigr)
\end{align}
with $y=^d y_i$ as in \eqref{Robust1}-\eqref{Robust2} and $(G, \tilde G)$ being independent of $y$.
Using \eqref{Robust1}, the above equation can be equivalently rewritten as
\begin{equation}
    \eta = F(\eta)
    \quad \text{where}\quad  F(\eta) \equiv \q \frac{
        \E\bigl[
        \bigl(\sigma G - \prox[\gamma {\ell}_{y}](\sigma G)\bigr)
        \bigl(\sigma G - \prox[\gamma {\ell}_{y}](\sigma \tilde G)\bigr)
        \bigr]
    }{
        \E[(\sigma G- \prox[\gamma{\ell}_y](\sigma G))^2
        ]
    } 
    \label{def-F}
\end{equation}
since $\E[(\sigma G- \prox[\gamma{\ell}_y](\sigma G))^2
        ] = \sigma^2/(\delta \q)$ in the denominator by \eqref{Robust1}.
This shows that any solution \( \eta \) must satisfy
\( |\eta|\le \q \) by the Cauchy-Schwarz inequality.

We will show in the next section that this equation in \( \eta \)
has a unique solution. Our main results imply a close relationship
between the solution \( \eta \) of \eqref{eta_equation_robust}
and the bagged estimates, in particular \eqref{intro_inner_product_P}
satisfies
\begin{equation}
    \bigl(\hbb(I_m)-\bbeta^*\bigr)^T 
    \bigl(\hbb(I_{m'}) - \bbeta^*\bigr) \to^P \eta \sigma^2.
\end{equation}
For two distinct and fixed \( m \ne m' \), the solution \( \eta \) further characterizes the joint distribution
of two predicted values \( \bx_i^T\hbb(I_m) \) and
\( \bx_i^T\hbb(I_m) \) with \( i\in I_m\cap I_{m'} \), by
showing the existence of \( (G_i,\tilde G_i) \) as in \eqref{eta_equation_robust},
independent of \( ({\ell}_i,U_i) \) and such that
\begin{align*}
        \bx_i^T\hbb(I_m)
        &=\prox[\gamma{\ell}_{y_i}](\sigma G_i)
    + \op(1), \quad 
        \bx_i^T\hbb(I_{m'}) =
        \prox[\gamma{\ell}_{y_i}](\sigma\tilde G_i)
    + \op(1)
\end{align*}

\subsection{Existence and uniqueness of solutions to the fixed-point
equation}

\begin{proposition}\label{lm:eta_exist}
    The function \( F \) in \eqref{def-F}
    is non-decreasing and \( \q \)-Lipschitz with \( 0\le F(0)\le q\le 1 \). 
    The equation
    \( \eta = F(\eta) \) has a unique solution \( \eta\in[0,\q] \).
\end{proposition}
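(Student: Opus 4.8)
The plan is to reduce every claim to properties of the single scalar function $\psi_y(x) := \sigma x - \prox[\gamma\ell_y](\sigma x)$ and its derivative, and to differentiate $F$ in the correlation $\eta$ by Gaussian integration by parts. Write $F(\eta) = q\,\E_y\E[\psi_y(G)\psi_y(\tilde G)]\,/\,\E_y\E[\psi_y(G)^2]$, where by \eqref{Robust1} the denominator is the $\eta$-free constant $\sigma^2/(\delta q)$ and $\E_y$ denotes expectation over $y=^d y_i$ independent of $(G,\tilde G)$. Since $\rho\in C^2$, the map $u\mapsto\prox[\gamma\ell_y](u)$ is $C^1$ with $(\prox[\gamma\ell_y])'(u) = (1+\gamma\ell_y''(\prox[\gamma\ell_y](u)))^{-1}$ and $\ell_y''(u)=\rho''(y-u)\in(0,1]$ by \Cref{assum:robust}; hence $s_y(x) := 1 - (\prox[\gamma\ell_y])'(\sigma x)\in(0,\tfrac{\gamma}{1+\gamma})\subset(0,1)$ and $\psi_y'(x)=\sigma s_y(x)\in(0,\sigma)$ pointwise. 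This two-sided pointwise control of $\psi_y'$ is what drives the whole argument.

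First I would obtain monotonicity and the Lipschitz bound simultaneously by differentiating in $\eta$. By Price's theorem (Gaussian integration by parts for a bivariate normal with unit variances and covariance $\eta$), $\frac{d}{d\eta}\E[\psi_y(G)\psi_y(\tilde G)] = \E[\psi_y'(G)\psi_y'(\tilde G)]$, and since $|\psi_y'|\le\sigma$ the integrand is bounded, so I may differentiate under $\E_y$ by dominated convergence to get $F'(\eta)=\frac{\delta q^2}{\sigma^2}\,\E_y\E[\psi_y'(G)\psi_y'(\tilde G)]$. As $\psi_y'>0$ the integrand is positive, so $F'\ge0$ and $F$ is non-decreasing on $[-1,1]$. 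For the Lipschitz constant, Cauchy--Schwarz together with the equality of the marginal laws of $G$ and $\tilde G$ gives $\E[\psi_y'(G)\psi_y'(\tilde G)]\le\E[\psi_y'(G)^2]$, hence $F'(\eta)\le\frac{\delta q^2}{\sigma^2}\,\E_y\E[\psi_y'(G)^2]$.

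The crux is then the inequality $\E_y\E[\psi_y'(G)^2]\le\sigma^2/(\delta q)$, which I expect to be the main obstacle and which is exactly where the defining equations enter. The point is the \emph{pointwise} bound $0<s_y<1$: it yields $\psi_y'(x)^2=\sigma^2 s_y(x)^2\le\sigma^2 s_y(x)$, so $\E_y\E[\psi_y'(G)^2]\le\sigma^2\,\E_y\E[s_y(G)]$. Rewriting \eqref{Robust2} via Stein's identity $\E[G\,\prox[\gamma\ell_y](\sigma G)]=\sigma\,\E[(\prox[\gamma\ell_y])'(\sigma G)]$ turns it into $\E_y\E[(\prox[\gamma\ell_y])'(\sigma G)]=1-1/(\delta q)$, i.e. $\E_y\E[s_y(G)]=1/(\delta q)$. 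Combining the two displays gives $\E_y\E[\psi_y'(G)^2]\le\sigma^2/(\delta q)$, whence $F'(\eta)\le q$ and $F$ is $q$-Lipschitz. The bound $0\le F(0)\le q$ is separate and easier: at $\eta=0$ the pair $(G,\tilde G)$ is independent, so the numerator equals $\E_y[(\E\psi_y(G))^2]$, which lies in $[0,\E_y\E[\psi_y(G)^2]]$ by Jensen, giving $F(0)\in[0,q]$.

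Finally I would deduce existence and uniqueness of the fixed point from these three facts. The Cauchy--Schwarz bound already noted after \eqref{def-F} gives $|F(\eta)|\le q$ for every $\eta$, and together with monotonicity and $F(0)\ge0$ this shows $F$ maps $[0,q]$ into $[0,q]$. Since $F$ is $q$-Lipschitz with $q<1$ (by \Cref{assum:robust}), it is a contraction of the complete set $[0,q]$ into itself, so Banach's fixed-point theorem produces a unique solution of $\eta=F(\eta)$ in $[0,q]$; equivalently, $\eta\mapsto F(\eta)-\eta$ is continuous, nonnegative at $0$, nonpositive at $q$, with derivative $F'(\eta)-1\le q-1<0$, hence has exactly one zero. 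I expect the only delicate technical points to be justifying Price's theorem and the differentiation under $\E_y$, both handled by the uniform bound $|\psi_y'|\le\sigma$ and the Lipschitz regularity of $\psi_y$, while the key inequality reduces pleasantly to the elementary pointwise estimate $s_y^2\le s_y$ matched against the first-order identity \eqref{Robust2}.
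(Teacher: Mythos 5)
Your proof is correct and follows essentially the same route as the paper: differentiate $F$ in $\eta$ via Gaussian integration by parts (the paper's \Cref{lm:varphi_derivative}, your Price's theorem), use \eqref{Robust2} together with Stein's identity to get $F'\in[0,q]$, and conclude by contraction plus $F(0)\ge 0$ to locate the fixed point in $[0,q]$. The only cosmetic difference is that you bound $F'$ via Cauchy--Schwarz and the pointwise estimate $s_y^2\le s_y$, whereas the paper simply bounds one of the two factors $1-\prox[\gamma\ell_y]'(\sigma\tilde G)$ by $1$; both yield the same constant $q$.
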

\begin{proof}
We may realize $\tilde G$ as
$\tilde G = \eta G + \sqrt{1-\eta^2}Z$
where $Z,G$ are iid $N(0,1)$ independent of ${\ell}_i$.
For any Lipschitz continuous
function $f$ with $\E[f(G)^2]<+\infty$, the map 
$\varphi: \eta \in [-1,1] \mapsto \E[f(G)f(\tilde G)]=\E[f(G)f(\eta G +\sqrt{1-\eta^2}Z)]\in\R$ has derivative
\begin{equation}
\varphi'(\eta) = \E[f'(G)f'(\tilde G)].
\label{varphi-d}
\end{equation}
See \Cref{lm:varphi_derivative} for the proof. 
In our case, this implies that the function
\eqref{def-F}
has derivative
\begin{equation}
F'(\eta) = 
\q^2\delta
\E\Bigl[
    \Bigl(1-\prox[\gamma{\ell}_y]'(\sigma G)\Bigr)
    \Bigl(1-\prox[\gamma{\ell}_y]'(\sigma \tilde G)\Bigr)
\Bigr]
    .
\label{F'}
\end{equation}
Since $\prox[\gamma{\ell}_y]$ is nondecreasing and 1-Lipschitz
for any convex function ${\ell}_y:\R\to\R$,
each factor inside the expectation belongs to $[0,1]$
and $0\le F'(\eta)$ holds.
By bounding from above the second factor,
\begin{align*}
F'(\eta) 
\le
\q^2 \delta
\E\bigl[
    1-\prox[\gamma{\ell}_y]'(\sigma G)
\bigr]
= \q^2 \delta (\q\delta)^{-1} = \q
\end{align*}
thanks to \eqref{Robust2} and Stein's {formula (or integration by parts)}
for the equality.
This shows $0\le F'(\eta) \le \q < 1$
so that $F$ is a contraction and admits a unique solution
in $[-1,1]$. 

We now show that the solution must be in $[0,\q]$.
The definition \eqref{def-F} gives $F(1)=\q$ as $\PP(G=\tilde G)=1$
when $\eta=1$.
Now we verify $F(0)\ge 0$. 
If $\eta=0$ then $(G, \tilde{G}, y)$ are independent and $G=^d\tilde{G}$ so by the tower property of conditional expectations,
\begin{align*}
    F(0) 
    &= \frac{\q^2\gamma^2\delta}{\sigma^2} \E\Bigl[\E\bigl[\bigl(\sigma G-\prox[\gamma {\ell}_y](\sigma G)\bigr) \mid y\bigr]^2\Bigr]
    \ge 0.
\end{align*}
{Since $0\le F(0)\le F(1)\le \q < 1$, the unique fixed-point must belong to $[0,\q]$.}
\end{proof}
\subsection{Main results in robust regression}

For any $I\subset[n]$ with $|I|=\q n = \q\delta p$,
the M-estimator $\hbb(I)=\argmin_{\bb\in\R^p}\sum_{i\in I} {\ell}_{y_i}(\bx_i^T\bb)$
satisfies the convergence in probability
\begin{equation}
    \|\hbb(I)-\bbeta^*\|^2 \to^P \sigma^2,
    \qquad
    \frac{1}{|I|}\sum_{i\in I} \Bigl({\ell}_{y_i}'(\bx_i^T\hbb(I))\Bigr)^2
    \to^P \frac{\sigma^2}{\gamma^2 \q \delta}.
    \label{eq_convergence}
\end{equation}
The first convergence in probability was proved by many authors,
e.g., \cite{el2013robust,donoho2016high,el2018impact,thrampoulidis2018precise}.
The second can be obtained using the CGMT of \cite{thrampoulidis2018precise},
see for instance \cite[Theorem 2]{loureiro2021learning}.
We will take the convergence in probability
\eqref{eq_convergence} for granted in our proof.

\begin{theorem}
    \label{thm:robust}
    Let \Cref{assum:robust} be fulfilled.
    Let $I,\tilde I$ be independent and uniformly distributed over all 
    subsets of $[n]$ of size $\q n$.
    Then
    \begin{equation}
        \label{eq:thm_eta_robust}
        (\hbb(I)-\bbeta^*)^T(\hbb(\tilde I)-\bbeta^*)
         \to^P \sigma^2 \eta,
         \qquad
        \frac{(\hbb(I)-\bbeta^*)^T(\hbb(\tilde I)-\bbeta^*)}
             {\|(\hbb(I)-\bbeta^*)\|_2 \|(\hbb(\tilde I)-\bbeta^*)\|_2}
             \to^P \eta
    \end{equation}
    where $\eta\in[0,\q]$ is the unique solution to \eqref{eta_equation_robust}.
    Furthermore, $\eta$ and $\eta\sigma^2$ can be consistently estimated
    in the sense
    \begin{equation}
        \label{eq:thm_estimation}
    \frac{\hat\gamma(I) \hat\gamma(\tilde I)}{p} \sum_{i\in I \cap \tilde I}{\ell}_{y_i}'\Bigl(\bx_i^T\hbb(I)\Bigr)
    {\ell}_{y_i}'\Bigl(\bx_i^T\hbb(\tilde I)\Bigr)
    \to^P \eta\sigma^2,
    \quad
    \frac{\hat\gamma(I)^2}{p} \sum_{i\in I}
    {\ell}_{y_i}'\Bigl(\bx_i^T\hbb(I)\Bigr)^2
    \to^P \sigma^2
    \end{equation}
    where 
    $$
    \hat \gamma(I)
    =p/\Bigl[
        \sum_{i\in I} {\ell}_{y_i}''(\bx_i^T\hbb(I))
    - {\ell}_{y_i}''(\bx_i^T\hbb(I))^2 \bx_i^T\bigl(\sum_{l\in I}\bx_l{\ell}_{y_l}''(\bx_l^T\hbb(I))\bx_l^T\bigr)^{-1} \bx_i\Bigr]
    $$
    Finally, for any $i\in I\cap \tilde I$,
    there exists $(G_i,\tilde G_i)$ jointly normal as in
    \eqref{eta_equation_robust} with $\E[G_i\tilde G_i]=\eta$ such that
    \begin{equation}
        \label{eq:thm_distribution_robust}
        \max_{i\in I\cap\tilde I}
        \E\Bigl[ 1 \wedge
        \Big\|
        \begin{pmatrix}
            \bx_i^T\hbb(I)\\
            \bx_i^T\hbb(\tilde I)
        \end{pmatrix}
        -
        \begin{pmatrix}
            \prox[\gamma{\ell}_{y_i}](\sigma G_i)\\
            \prox[\gamma{\ell}_{y_i}](\sigma \tilde G_i)\\
        \end{pmatrix}
        \Big\|_2
        \mid (I,\tilde I)
        \Bigr]
        \to^P 0.
    \end{equation}
\end{theorem}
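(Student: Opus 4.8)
The plan is to prove \Cref{thm:robust} by setting up a joint approximate message passing / leave-one-out analysis for the two estimators $\hbb(I)$ and $\hbb(\tilde I)$ \emph{simultaneously}, treating the overlap $I\cap\tilde I$ as the source of correlation. The single-estimator asymptotics \eqref{eq_convergence} are taken for granted, so the novelty is entirely in controlling the cross terms. I would begin from the KKT (stationarity) conditions for each estimator, $\sum_{i\in I}\bx_i{\ell}_{y_i}'(\bx_i^T\hbb(I))=\bm 0_p$ and similarly for $\tilde I$, and introduce the residual vectors $\br(I)=({\ell}_{y_i}'(\bx_i^T\hbb(I)))_{i\in I}$. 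The proximal representation is the bridge: for each $i\in I$ one has $\bx_i^T\hbb(I)=\prox[\gamma{\ell}_{y_i}](\,\bx_i^T\hbb(I)+\gamma{\ell}_{y_i}'(\bx_i^T\hbb(I))\,)$, and the argument of the prox should concentrate around $\sigma G_i$ for an appropriate Gaussian $G_i$ built from the part of $\hbb(I)$ that is independent of $\bx_i$.

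The key steps, in order, would be: (1) Establish the distributional characterization \eqref{eq:thm_distribution_robust} first, since the two convergence statements \eqref{eq:thm_eta_robust} and the estimator consistency \eqref{eq:thm_estimation} will follow from it. For this I would fix $i\in I\cap\tilde I$ and condition on everything except the $i$-th row, using a leave-one-out decomposition to show that $\bx_i^T\hbb(I)$ and $\bx_i^T\hbb(\tilde I)$ are jointly close to $(\prox[\gamma{\ell}_{y_i}](\sigma G_i),\prox[\gamma{\ell}_{y_i}](\sigma\tilde G_i))$, where the correlation $\E[G_i\tilde G_i]$ is driven by the alignment $(\hbb(I)-\bbeta^*)^T(\hbb(\tilde I)-\bbeta^*)/\sigma^2$. (2) Close the loop self-consistently: compute $\E[(\sigma G-\prox)(\sigma\tilde G-\prox)]$ using the joint Gaussian approximation and match it to the inner product, which forces the limiting correlation $\eta$ to satisfy exactly the fixed-point equation \eqref{eta_equation_robust}; uniqueness is already guaranteed by \Cref{lm:eta_exist}. (3) Derive \eqref{eq:thm_eta_robust} by taking expectations of the prox representation over $i\in I\cap\tilde I$ (which has size $\approx q^2n$ by \Cref{rm:gepmetric_dist}) and relating the sum back to the inner product through the stationarity conditions. (4) For the consistent estimators, verify that $\hat\gamma(I)$ concentrates to $\gamma$ — this is the effective-degrees-of-freedom / divergence formula, where the Schur-complement term $\bx_i^T(\sum_l\bx_l{\ell}''\bx_l^T)^{-1}\bx_i$ implements the leave-one-out correction — and that $p^{-1}\hat\gamma(I)\hat\gamma(\tilde I)\sum_{i\in I\cap\tilde I}{\ell}'{\ell}'$ concentrates to $\eta\sigma^2$ via \eqref{eta_equation_robust} rewritten in terms of residuals ${\ell}_{y_i}'=(\sigma G_i-\prox)/\gamma$.

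I would expect the main obstacle to be step (1)—the joint leave-one-out analysis producing a genuinely \emph{bivariate} Gaussian limit with the correct cross-covariance $\eta$. The difficulty is that $\hbb(I)$ and $\hbb(\tilde I)$ are coupled only through the shared rows $i\in I\cap\tilde I$, yet each estimator depends on its full row set, so one must carefully separate the contribution of the overlap from the disjoint parts $I\setminus\tilde I$ and $\tilde I\setminus I$. Showing that the effective correlation between the two Gaussian fields is exactly $\eta$ (and not some other functional of the overlap) requires tracking how the alignment $(\hbb(I)-\bbeta^*)^T(\hbb(\tilde I)-\bbeta^*)$ feeds back into the covariance of $(G_i,\tilde G_i)$ and verifying this is internally consistent. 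Because the condition $\inf_t\rho''(t)>0$ is imposed in \Cref{assum:robust}, I can lean on strong convexity of each ${\ell}_{y_i}$ to get quantitative stability of $\hbb(I)$ under leave-one-out perturbations, which is what makes the $\op(1)$ remainders controllable uniformly over $i\in I\cap\tilde I$; without it the perturbation bounds would degrade, which is precisely why that assumption appears.

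A secondary technical point, worth handling carefully, is the randomness of the \emph{index sets} $I,\tilde I$ themselves: the convergence \eqref{eq:thm_eta_robust} is stated over the joint randomness of $(\bX,\by,I,\tilde I)$, so after establishing the conditional-on-$(I,\tilde I)$ statement \eqref{eq:thm_distribution_robust} I would need to average over the hypergeometric overlap, using $|I\cap\tilde I|/n\to^P q^2$ from \Cref{rm:gepmetric_dist} to pass from the conditional to the unconditional limit. The truncation $1\wedge\|\cdot\|_2$ in \eqref{eq:thm_distribution_robust} is a convenient device to keep the expectation finite despite possible heavy tails in $y_i$, and I would exploit it to upgrade in-probability control of the prox representation to the stated $L^1$-type convergence.
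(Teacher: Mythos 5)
Your overall architecture matches the paper's: establish a joint (bivariate) Gaussian characterization of $(\bx_i^T\hbb(I),\bx_i^T\hbb(\tilde I))$ whose cross-covariance is tied to the empirical alignment, then close the loop self-consistently to identify the limit with the fixed point of \eqref{eta_equation_robust}. The technical engine you propose is genuinely different, however. The paper does not use leave-one-out conditioning or AMP state evolution; it works with the modified M-estimator of \Cref{lem:reg}, explicit derivative formulas $\partial\hbbeta/\partial x_{ij}$ (\Cref{lm:derivative_formula}), and Gaussian integration-by-parts machinery: a second-order Stein identity (\Cref{lem:SOS}) yielding $p\,\hbbeta^\top\bP\tbbeta=\gamma^2\bpsi^\top\tbpsi+\Op(\sqrt n)$, a matrix-valued approximate-normality bound (\Cref{lem:W}) applied to $\bW=[\bP\hbbeta\,|\,\bP\tbbeta]$, and the Gaussian Poincar\'e inequality for concentration. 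What the paper's route buys is that all remainders are controlled by Frobenius norms of derivatives, bounded once and for all in \eqref{bound_frobenius_norm_derivatives}, so no uniform-over-$i$ leave-one-out perturbation analysis is needed; your route would instead have to prove quantitative leave-one-out stability uniformly over $i\in I\cap\tilde I$ \emph{jointly} for the two coupled estimators, which is doable in principle (this is the El Karoui heuristic made rigorous) but is a substantial amount of work you have only sketched.

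There is one concrete gap in your step (2). You write that matching the joint Gaussian approximation to the inner product ``forces the limiting correlation $\eta$ to satisfy exactly the fixed-point equation,'' and that ``uniqueness is already guaranteed by \Cref{lm:eta_exist}.'' This presupposes that the random alignment $\bar\eta_n=\sigma^{-2}(\hbb(I)-\bbeta^*)^\top(\hbb(\tilde I)-\bbeta^*)$ converges at all; a priori it could oscillate, and each subsequential limit would only be known to satisfy the equation approximately. Uniqueness of the solution to the deterministic equation does not by itself rule this out. The paper's resolution is the quantitative contraction step: after showing the \emph{approximate} fixed-point relation $\bar\eta=\frac{\delta q^2\gamma^2}{\sigma^2}\bar\varphi(\bar\eta)+\op(1)$ for the random, data-dependent map $\bar\varphi$, it bounds $0\le\bar\varphi'(t)\le\sigma^2/(\gamma^2 q\delta)+\op(1)$ uniformly in $t$ (via \Cref{lm:varphi_derivative} and \eqref{Robust2}), applies the mean value theorem, and concludes $|\bar\eta-\eta|\le q|\bar\eta-\eta|+\op(1)$, hence $\bar\eta\to^P\eta$ since $q<1$. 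You have all the ingredients available (the proof of \Cref{lm:eta_exist} already establishes the $q$-Lipschitz property of the deterministic $F$), but your proposal as written does not transfer that contraction to the random approximate fixed point, and this transfer is precisely the step the paper identifies as missing from prior work without strong convexity. A secondary, smaller point: your ordering (prove \eqref{eq:thm_distribution_robust} first, with $\E[G_i\tilde G_i]=\eta$) cannot quite be executed as stated, because until the contraction step is done the cross-covariance of the Gaussian pair is the random $\bar\eta$, not $\eta$; the paper keeps the covariance random (as $\bW^\top\bW$) throughout and only substitutes $\eta$ at the very end via \eqref{cont_mapping_matrix_sq}.
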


\Cref{thm:robust} is proved in \Cref{sec:proof}.
It provides three messages. First,
\eqref{eq:thm_eta_robust} states that
the correlation 
$(\hbb(I)-\bbeta^*)^T(\hbb(\tilde I)-\bbeta^*)$
between two estimators trained in independent subsets $I,\tilde I$
both of cardinally $\q n$ converges to the unique solution $\eta$
of \eqref{eta_equation_robust}.
A direct consequence is that the squared risk of the bagged
estimate \eqref{risk_bagged} satisfies
\begin{equation}
\|\bar\bb-\bbeta^*\|^2 \to^P \sigma^2/M + (1-1/M)\sigma^2\eta.
\label{risk_bagged_limit}
\end{equation}
Second,
both terms in this risk decomposition of the bagged estimate $\bar\bb$ can be 
estimated using \eqref{eq:thm_estimation} averaged over all pairs
$(I_m,I_{m'})_{m\ne m'}$, that is,
\begin{equation*}
    \frac{1}{M^2}
    \sum_{m\ne m'}
    \frac{\hat\gamma(I_m) \hat\gamma(\tilde I_{m'})}{p} \sum_{i\in I_{m'} \cap \tilde I_m}{\ell}_{y_i}'\Bigl(\bx_i^T\hbb(I_{m'})\Bigr)
    {\ell}_{y_i}'\Bigl(\bx_i^T\hbb(\tilde I)\Bigr)
    \to^P \Bigl(1-\frac1M\Bigr)\eta\sigma^2,
\end{equation*}
and 
$\frac{1}{M^2}\sum_{m=1}^M\frac{\hat\gamma(I_m)^2}{p} \sum_{i\in I_m}
    {\ell}_{y_i}'(\bx_i^T\hbb(I_m))^2
    \to^P \sigma^2/M$.
These estimators let us estimate the risk of the bagged estimate
\eqref{risk_bagged_limit}, for instance to choose an optimal subsample size
$\q\in(0,1)$, or to choose a large enough constant $M>0$ 
so that \eqref{risk_bagged_limit} is close to the large-$M$ limit
given by $\sigma^2\eta$.
At a high level, these estimators take the form of an inner product of “residuals,”-specifically $\sum_{i\in I\cap \tilde I} \ell_{y_i}' (\bx_i^\top \hat\bb(I)) \ell_{y_i}'(\bx_i^\top \hat\bb(\tilde I))$-
followed by observable adjustments through the factors $\hat\gamma(I)$ and $\hat\gamma(\tilde{I})$. This result is complement to the Corrected Generalized Cross-Validation (CGCV) developed in \cite[equation (13)]{bellec2023corrected}, which similarly constructs a risk estimator as an adjusted inner product of residuals, in the context of regularized least-squares estimators.

As shown in \Cref{fig:huber}, resampling and bagging is sometimes
beneficial but not always. Whether the curve $\q \mapsto \sigma^2\eta$
is U-shaped and minimized at some $\q^*<1$ (i.e., bagging is beneficial)
depends on the interplay between 
the oversampling ratio $\delta=n/p$, 
the distribution of the noise
$\eps_i$ and 
the robust loss function $\rho$ used in \eqref{hbbI_intro}.
In \Cref{fig:huber}, we observe that if $\eps_i/\tau$ has
t-distribution with 2 degrees of freedom and $\delta=5$,
subsampling is not beneficial for $\tau=1$ but
becomes beneficial for $\tau\ge 1.5$.
The generality of this phenomenon is unclear at this point.

The third message of \Cref{thm:robust} is the characterization
of the limiting bivariate distribution of $(\bx_i^T\hbb(I),\bx_i^T\hbb(\tilde I))$ for an observation $i\in I \cap \tilde I$ used to train both
$\hbb(I)$ and $\hbb(\tilde I)$.
The convergence \eqref{eq:thm_distribution_robust}
implies that 
$(\bx_i^T\hbb(I),\bx_i^T\hbb(\tilde I))$
converges to the distribution of
$(\prox[\gamma{\ell}_{y_i}](\sigma G_i),\prox[\gamma{\ell}_{y_i}](\sigma \tilde G_i))$ weakly. Here $(G_i,\tilde G_i)$ has the multivariate normal distribution as in \eqref{eta_equation_robust}. 

The setting of resampling without replacement in the proportional regime
of the present paper is also studied in the recent paper \cite{clarte2024analysis}.
There are some significant differences between our contributions and 
\cite{clarte2024analysis}.
First, an additive Ridge penalty is imposed in \cite{clarte2024analysis}
and multiple resampling schemes are studied,
while our object of interest is the unregularized M-estimator
\eqref{hbbI_intro} with a focus on resampling without replacement.
The simple
fixed-point equation \eqref{eq:thm_eta_robust} does not appear explicitly
in \cite{clarte2024analysis}, which instead focuses on self-consistent
equations satisfied by bias and variance functionals \cite[(16)]{clarte2024analysis}
of the specific resampling scheme under study.
Another distinctive contribution of the present paper is the proposed estimator
\eqref{eq:thm_estimation} which can be used to optimally tune the subsample size,
and the proof that the equation \eqref{eta_equation_robust} admits a unique solution.
The use of an additive Ridge penalty brings strong
convexity to the optimization problem and simplifies the
analysis, as observed in \cite{el2013robust}; in this case this makes the
analysis \cite[(212)-(218)]{loureiro2022fluctuations} based on
\cite{bayati2011lasso} readily applicable.

\subsection{Numerical simulations in robust regression}
\label{sec:simu_robust}
Let us verify \Cref{thm:robust} with numerical simulations.
Throughout this section, we focus on the Huber loss 
\begin{equation*}
    \rho(t)=
\begin{cases}
t^2/2 &\text{ if }|t|<1,\\
|t|-1/2& \text{ if }|t|\ge 1.
\end{cases}
\end{equation*}
The oversampling ratio $\delta=n/p$ is fixed to $5$.
First, we plot $\eta$ and $\sigma^2\eta$ as functions of ${q}\in [1/\delta, 1]$ for different noise scales: we change the noise distribution as $\{\text{scale}\}\times \text{t-dist (df=2)}$, $\text{scale}\in \{1, 1.5, 2, 5, 10\}$. The left figures in \Cref{fig:huber} imply that the curve $q\mapsto \eta$ is nonlinear. Note that the dashed line is the affine line $\q\mapsto (q-\delta^{-1})/(1-\delta^{-1})$. 
More interestingly, the larger the noise scale is, the larger the nonlinearity is. In the right figures in \Cref{fig:huber}, we observe that the plot ${q}\mapsto\eta\alpha^2$ takes a U-shape curve when the noise scale is sufficiently large. Note that similar results are obtained for ensembles of Ridge estimators in \cite{anders1997statistical}. Interestingly, \Cref{fig:huber} suggests that as the scale of noise distribution increases, sub-sampling is eventually beneficial in the sense that the limit of \eqref{risk_bagged_limit} as $M\to+\infty$ is smaller than the squared error of a single estimate trained on the full dataset.
This phenomenon also occurs when the noise distribution has a finite variance (see \Cref{subsec:change_noise_dist}).

Next, we compare in simulations the correlation and the inner product
     with their theoretical limits $(\eta, \eta\sigma^2)$ as in \eqref{eq:thm_eta_robust}, as well as the estimator in \eqref{eq:thm_estimation}. 
Here, the noise distribution is fixed to $3\cdot \text{t-dist(df=2)}$ with $(n, p)=(5000, 1000)$ and ${100}$ repetitions. \Cref{fig:huber_compare} implies that the correlation and product are approximated well by the corresponding theoretical values and estimates.

We have also conducted the same experiment for the pseudo-Huber loss $\rho(x)=\sqrt{1+x^2}$ in \Cref{subsec:pseudo_huber} and verified the validity of \Cref{thm:robust}.

\begin{figure}
    \centering
    \begin{subfigure}[b]{0.49\textwidth}
        \centering
        \includegraphics[width=\textwidth]{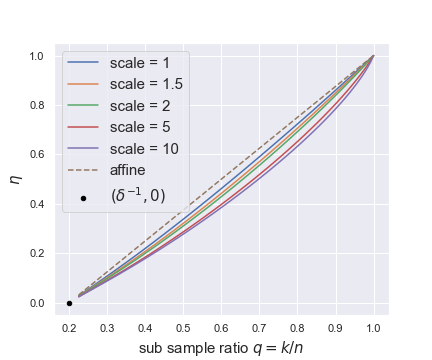}
    \end{subfigure}
    \begin{subfigure}[b]{0.49\textwidth}
        \centering
        \includegraphics[width=\textwidth]{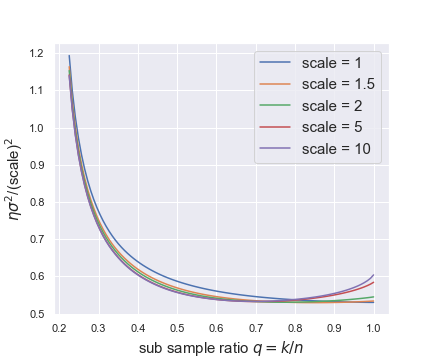}
    \end{subfigure}
    \begin{subfigure}[b]{0.49\textwidth}
        \centering
        \includegraphics[width=\textwidth]{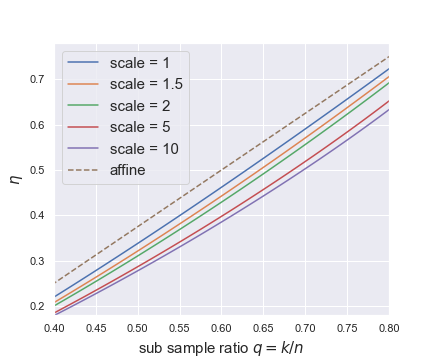}
    \end{subfigure}
    \begin{subfigure}[b]{0.49\textwidth}
        \centering
        \includegraphics[width=\textwidth]{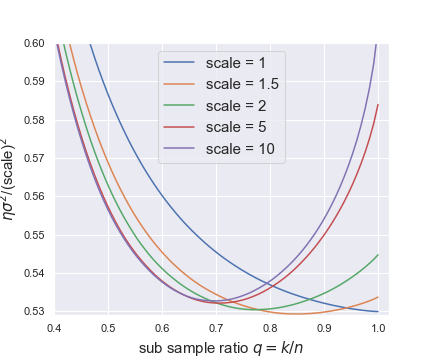}
    \end{subfigure}
    \caption{Plot of ${\q} \mapsto \eta$ and ${\q}\mapsto \sigma^2\eta$
        obtained by solving \eqref{eta_equation_robust} numerically.
        {Different noise distributions are given by} $(\text{scale})\times \text{t-dist (df=2)}$, for scale$\in\{1,{ 1.5, 2, 5,} 10\}$. The dashed line is the affine line $\q\mapsto (q-\delta^{-1})/(1-\delta^{-1})$. 
    {The bottom plots zoom in on a specific region of the top plots.}
    }
    \label{fig:huber}
\end{figure}

\begin{figure}
    \centering
    \begin{subfigure}[b]{0.49\textwidth}
        \centering
        \includegraphics[width=\textwidth]{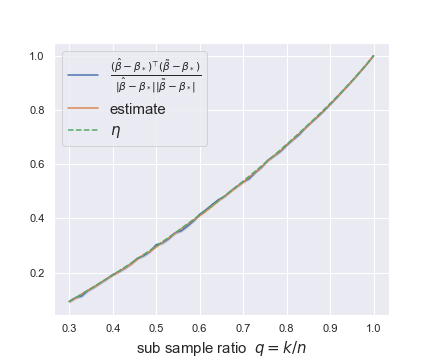}
    \end{subfigure}
    \begin{subfigure}[b]{0.49\textwidth}
        \centering
        \includegraphics[width=\textwidth]{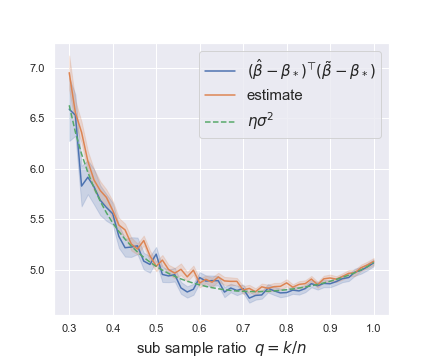}
    \end{subfigure}
    \caption{Comparison of simulation results, theoretical curves
            obtained by solving \eqref{eta_equation_robust} numerically,
    and estimate constructed by \eqref{eq:thm_estimation}. Here, the noise distribution is fixed to $3\times \text{t-dist(df=2)}$ and $(n, p)=(5000, 1000)$.}
    \label{fig:huber_compare}
\end{figure}

\section{Resampling without replacement in logistic regression}
\label{sec:logistic}

\subsection{A review of existing results in logistic regression}
\label{s:intro_logistic}

Let \( \nu >0, \q\in(0,1], \delta>1 \) be fixed constants.
If a single estimator \( \hbb(I) \) is trained with \eqref{hbbI_intro}
on a subset of observations \( I\subset [n] \) with \( |I|/n=\q \)
for some constant \( \q\in(0,1] \) held fixed as \( n,p\to+\infty \),
the behavior of \( \hbb(I) \) is now well-understood
when \( (y_i,\bx_i)_{i\in [n]} \) are iid with \( \bx_i \sim N(\bm 0_p,\bI_p)\) normally
distributed and the conditional distribution \( y_i \mid \bm x_i \)
following a logistic model of the form
\begin{equation}
    \PP\Bigl(y_i= 1 \mid \bm x_i\Bigr)  
    = \frac{1}{1+\exp(-\bm x_i^T\bbeta^*)}
    = \frac{1}{1+\exp(- \nu \bm x_i^T\bw)}
\end{equation}
where \( \bbeta^* \) is a ground truth with \( \|\bbeta^*\|=\nu\), and 
\( \bm w= \bbeta^*/\nu \) is the projection of \( \bbeta^* \) on the unit
sphere.
In this logistic regression model,
the limiting behavior of \( \hbb(I) \) with the logistic loss \eqref{logi_loss}
{trained using $|I|=(\delta\q) p$ samples}
is characterized as follows:
there exists a monotone continuous function \( h(\cdot) \)
(with explicit expression given in \cite{candes2020phase})
such that:

\( \bullet  \)
If \( \delta\q < h(\nu) \) then the logistic MLE 
        \eqref{hbbI_intro} does not exist with high-probability.

\( \bullet  \)
If \( \delta\q > h(\nu) \) then there exists a unique
\cite{sur2018modern}
solution \( (\sigma_*,a_*,\gamma_*) \) to the following
the low-dimensional system of equations
\begin{align}
    \label{1}
    \tfrac{\sigma^2}{\delta\q}&= \E[
    (a U + \sigma G - \prox[\gamma{\ell}_y](a U + \sigma G))^2
    ], \\
    0 &= \E[
    (a U + \sigma G - \prox[\gamma{\ell}_y](a U + \sigma G))
    ],
    \label{2}
    \\
    1
    -
    \tfrac{1}{\delta\q} 
      &= \sigma^{-1}
    \E[G
        \prox[\gamma{\ell}_y]'(a U + \sigma G)
    ]
    \label{3}
\end{align}
where $G\sim N(0,1)$ is independent of $(y, U)$ and $(y, U)=^d (y_i, \bx_i^\top\bw)$ for any $i$. 
Above, \( \prox[f](x_0)=\argmin_{x\in\R}(x_0-x)^2/2 + f(x) \) denotes the proximal operator of any convex function
\( f \) for any \( x_0\in \R \).
In this region \( \{\delta\q > h(\nu)\} \) where the above system
admits a unique solution
$(a,\sigma,\gamma)$,
the logistic MLE \eqref{hbbI_intro}
exists with high-probability and
the following convergence in probability holds,
\begin{align}
    \bm w^T \hbb(I) &\to^P a  ,
    \label{limit-a_*}
    \\
    \qquad
    \|(\bm I_p-\bw\bw^T)\hbb(I)\|^2
    &\to^P
    \sigma^2,
    \label{limit-sigma_2}
    \\
    \frac{1}{|I|}\sum_{i\in I}{\ell}_{y_i}'\Bigl(\bx_i^T\hbb(I)\Bigr)^2
    &\to^P \frac{\sigma^2}{ \gamma^2\q\delta}.
    \label{limit-sigma2_by_q_gamma_2_delta}
\end{align}
by \cite{sur2018modern,salehi2019impact} for the first two lines and 
\cite[Theorem 2]{loureiro2021learning} for the third.
Further results are obtained in \cite{candes2020phase,sur2018modern,zhao2020asymptotic}, including asymptotic normality results for individual
components \( \hat b_j \) of \eqref{hbbI_intro}. 
Note that the 3-unknowns system \eqref{1}-\eqref{3} is stated
in these existing works
after integration of the distribution of \( y \). We choose the equivalent
formulation \eqref{1}-\eqref{3} without integrating the conditional
distribution of \( y \) as the form \eqref{1}-\eqref{3} is closer
to \eqref{Robust1}-\eqref{Robust2} from robust regression,
and closer to the quantities naturally appearing in our proofs.
In \Cref{sec:proof}, this common notation is useful to prove the
main results simultaneously for robust linear regression and logistic
regression.

While the limit in probability of the correlation
\( \bar\bb^T\bbeta^* \) can be deduced directly from \eqref{limit-a_*},
the case of Mean Squared Error (MSE) \( \|\bar\bb - \bbeta^*\|^2 \) 
or the correlation $\bar\bb^T\bbeta^*$
is more subtle.
To see the crux of the problem,
recall \( \bw = \bbeta^*/\|\bbeta^*\| \), define
\( \bP = (\bm I_p-\bw\bw^T) \) for brevity,
and 
consider the decomposition:
\begin{align}
    \|\bar\bb - \bbeta^*\|^2
    &=
    \bigl(\bm w^T(\bar\bb - \bbeta^*)\bigr)^2
    + \|\bP\bar \bb\|^2=      \bigl(\bm w^T(\bar\bb - \bbeta^*)\bigr)^2
    + \frac{1}{M^2}\sum_{m,m'=1}^M
    \hbb(I_m)^T\bP \hbb(I_{m'}).   \label{intro_inner_product_P}
\end{align}
In order to characterize the limit of the MSE of \( \bar\bb \),
or to characterize the limit of the normalized correlation
\( \|\bar\bb\|^{-1} \bar\bb^T\bbeta^* \), we need to first understand
the limit of the inner product $\hbb(I_m)^T\bP \hbb(I_{m'})$, 
where \( \hbb(I_m) \) and \( \hbb(I_{m'}) \) are trained on two
subsamples \( I_m \) and \( I_{m'} \) with non-empty intersection.
This problem happens to be almost equivalent to the corresponding
one in robust regression, and we will prove the following
result and \Cref{thm:robust} simultaneously.

\subsection{Main results for logistic regression}

\begin{assumption}
    \label{assum:logi}
    Let $\q\in(0,1),\nu>0,\delta>0$ be constants such that $\q\delta>h(\nu)$
    as $n/p = \delta$ as $n,p\to+\infty$
    with $\bbeta^*\in\R^p$ satisfying $\|\bbeta^*\|=\nu$.
    Assume that $(\bx_i,y_i)_{i\in[n]}$ are iid with
    $y_i\in \{0,1\}$ following the logistic model 
    $\PP(y_i=1\mid \bx_i)=1/(1+\exp(-\bx_i^T\bbeta^*))$.
    Assume that the loss ${\ell}_{y_i}$ is the usual binary logistic loss
    given by \eqref{logi_loss}.
\end{assumption}
In other words, we assume a logistic model with parameters  on the side of the
phase transition where the MLE exists with high-probability.
In this regime, the system \eqref{1}-\eqref{3} admits a unique
solution $(a,\sigma,\gamma)$ and the convergence in probability
\eqref{limit-a_*}-\eqref{limit-sigma2_by_q_gamma_2_delta} holds.

\begin{proposition}\label{prop:eta_exist_logit}
    Under \Cref{assum:logi}, the equation
    \begin{equation}
        \begin{aligned}
        \eta=\frac{{\q^2}\delta\gamma^2}{\sigma^2}
        \E\Bigl[
            {\ell}_y'\Bigl(\prox[\gamma {\ell}_y](a U + \sigma G)\Bigr)
            {\ell}_y'\Bigl(\prox[\gamma {\ell}_y](a U + \sigma \tilde G)\Bigr)
        \Bigr], \quad 
        \begin{pmatrix}
            G \\
            \tilde G
        \end{pmatrix}
        \sim N\Bigl(0_2,
            \begin{pmatrix}
                1 & \eta \\
                \eta & 1
            \end{pmatrix}
        \Bigr)
        \end{aligned}
        \label{eta_equation_logi}
    \end{equation}
    with unknown $\eta$ admits a unique solution $\eta\in[0,\q]$.
Above, $(G,\tilde G)$
are independent of $(U, y)$ and $(U, y)=^d (\bx_i^\top \bw, y_i)$. 

\end{proposition}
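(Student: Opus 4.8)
The plan is to mirror the argument for \Cref{lm:eta_exist}, exploiting that the first-order condition for the proximal operator turns \eqref{eta_equation_logi} into exactly the normalized structure of \eqref{def-F}. Write $A = {\ell}_y'(\prox[\gamma{\ell}_y](aU+\sigma G))$ and $\tilde A = {\ell}_y'(\prox[\gamma{\ell}_y](aU+\sigma \tilde G))$, and let $F(\eta)$ denote the right-hand side of \eqref{eta_equation_logi}. The stationarity identity $\gamma\,{\ell}_y'(\prox[\gamma{\ell}_y](z)) = z - \prox[\gamma{\ell}_y](z)$ together with \eqref{1} gives $\gamma^2\E[A^2] = \E[(aU+\sigma G - \prox[\gamma{\ell}_y](aU+\sigma G))^2] = \sigma^2/(\delta\q)$, so that $F(\eta) = \q\,\E[A\tilde A]/\E[A^2]$. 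Since $\E[\tilde A^2]=\E[A^2]$ by symmetry of the marginals of $(G,\tilde G)$, Cauchy--Schwarz immediately yields $|F(\eta)|\le \q$ for every $\eta\in[-1,1]$.

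Next I would show $F$ is a nondecreasing contraction. For fixed $(U,y)$ set $f_{U,y}(x) = {\ell}_y'(\prox[\gamma{\ell}_y](aU+\sigma x))$; using the stationarity identity once more, $f_{U,y}'(x) = (\sigma/\gamma)(1-\prox[\gamma{\ell}_y]'(aU+\sigma x))$, which lies in $[0,\sigma/\gamma]$ because $\prox[\gamma{\ell}_y]$ is $1$-Lipschitz and nondecreasing. In particular $f_{U,y}$ is Lipschitz and bounded (for the logistic loss $|{\ell}_y'|\le 1$), so \Cref{lm:varphi_derivative} applies conditionally on $(U,y)$; differentiating under the outer expectation over $(U,y)$ is justified by dominated convergence since all integrands are uniformly bounded. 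This gives
\[
F'(\eta) = \q^2\delta\,\E\bigl[(1-\prox[\gamma{\ell}_y]'(aU+\sigma G))(1-\prox[\gamma{\ell}_y]'(aU+\sigma \tilde G))\bigr].
\]
Each factor lies in $[0,1]$, so $F'(\eta)\ge 0$ and $F$ is nondecreasing. Bounding the second factor by $1$ and invoking Stein's formula together with the third fixed-point equation \eqref{3} to obtain $\E[\prox[\gamma{\ell}_y]'(aU+\sigma G)] = 1 - 1/(\q\delta)$, i.e.\ $\E[1-\prox[\gamma{\ell}_y]'(aU+\sigma G)] = 1/(\q\delta)$, yields $F'(\eta)\le \q^2\delta\cdot(\q\delta)^{-1} = \q <1$, so $F$ is a contraction on $[-1,1]$.

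Finally I would pin down the range. Taking $\eta=1$ forces $G=\tilde G$ almost surely, so the numerator equals the denominator and $F(1)=\q$. Taking $\eta=0$ makes $G,\tilde G$ independent and independent of $(U,y)$, so conditioning gives $\E[A\tilde A\mid U,y]=\E[A\mid U,y]^2\ge 0$ and hence $F(0)\ge 0$. Monotonicity together with $0\le F(0)$ and $F(\eta)\le \q$ shows that $F$ maps $[0,\q]$ into itself; Banach's fixed-point theorem on this complete interval produces a fixed point $\eta\in[0,\q]$, and global uniqueness follows from $F$ being a contraction on all of $[-1,1]$.

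The only place the argument departs from \Cref{lm:eta_exist} — and hence the main point to handle with care — is the shared shift $aU$ appearing inside both proximal operators: the variable $U$ is common to the two factors while only $(G,\tilde G)$ carry the correlation $\eta$. The resolution is to condition on $(U,y)$ throughout, so that \Cref{lm:varphi_derivative} and Stein's formula apply to $G$ (resp.\ $\tilde G$) alone and are then averaged over $(U,y)$; the required interchange of differentiation and expectation is harmless because the logistic loss guarantees uniform boundedness of ${\ell}_y'$, of $\prox[\gamma{\ell}_y]'$, and of all the resulting integrands.
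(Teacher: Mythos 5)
Your proposal is correct and is essentially the paper's own argument: the paper omits the proof of this proposition precisely because it is the same fixed-point/contraction argument as for \Cref{lm:eta_exist}, and you reproduce it faithfully, with the only genuinely new ingredient — conditioning on the shared $(U,y)$ so that \Cref{lm:varphi_derivative} and Stein's formula are applied to $(G,\tilde G)$ alone before averaging — handled exactly as intended (this is also how the paper itself uses \eqref{3} later, in \Cref{sec:proof}, to get $\E[1-\prox[\gamma\ell_y]'(aU+\sigma G)]=1/(\q\delta)$).
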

We omit the proof since it is exactly same as the proof of \Cref{lm:eta_exist}. 
Similarly to robust regression in \Cref{thm:robust}, the solution
$\eta$ to \eqref{eta_equation_logi} characterizes
the limit in probability of the correlation $\hbb(I_m)^T\bP \hbb(I_{m'})$, 
the estimator \eqref{eq:thm_estimation} is still valid for estimating
$\eta\sigma^2$, and finally we can characterize the joint distribution
of  two predicted values \( \bx_i^T\hbb(I_m) \) and
\( \bx_i^T\hbb(I_m) \) for an observation \( i\in I_m\cap I_{m'} \)
appearing in both datasets.

\begin{theorem}
    \label{thm:logi}
    Let \Cref{assum:logi} be fulfilled 
    and let 
    $\bP=\bI_p - \bbeta^* \frac{1}{\|\bbeta^*\|^2} \bbeta^*{}^T$.
    Let $I,\tilde I$ be independent and uniformly distributed over all 
    subsets of $[n]$ of size $\q n$.
    Then
    \begin{equation}
        \hbb(I)\bP \hbb(\tilde I)
         \to^P \sigma^2 \eta,
         \qquad
        \frac{\hbb(I)^T \bP \hbb(\tilde I)}
             {\|\bP \hbb(I)\|_2 \|\bP \hbb(\tilde I)\|_2}
             \to^P \eta
             \label{eq:thm_eta_logi}
    \end{equation}
    where $\eta\in[0,\q]$ is the unique solution to \eqref{eta_equation_logi}.
    Furthermore, $\eta$ and $\eta\sigma^2$ can be consistently estimated
    in the sense that \eqref{eq:thm_estimation} holds.
    Finally, for any $i\in I\cap \tilde I$,
    there exists $(G_i,\tilde G_i)$ as in
    \eqref{eta_equation_robust}, independent of $(y_i,U_i) = (y_i, \bx_i^\top \bbeta_*/\|\bbeta_*\|)$ such that
    \begin{equation}
             \label{eq:thm_eta_distribution_logi}
        \max_{i\in I\cap\tilde I}
        \E\Bigl[ 1 \wedge
        \Big\|
        \begin{pmatrix}
            \bx_i^T\hbb(I)\\
            \bx_i^T\hbb(\tilde I)
        \end{pmatrix}
        -
        \begin{pmatrix}
            \prox[\gamma{\ell}_{y_i}](aU_i + \sigma G_i)\\
            \prox[\gamma{\ell}_{y_i}](aU_i + \sigma \tilde G_i)\\
        \end{pmatrix}
        \Big\|_2
        \mid (I,\tilde I)
        \Bigr]
        \to^P 0.
    \end{equation}
    
\end{theorem}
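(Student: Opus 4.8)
The plan is to prove Theorems~\ref{thm:logi} and~\ref{thm:robust} together through a single deterministic-equivalent identity, exploiting that the only coupling between $\hbb(I)$ and $\hbb(\tilde I)$ comes from the design rows $\bx_i$ shared by $I\cap\tilde I$. Writing $U_i=\bx_i^T\bw$ with $\bw=\bbeta^*/\|\bbeta^*\|$ (and setting $a=0$, $U_i\equiv 0$ to recover the robust case, where $\bP=\bI_p$), the first step is a joint proximal/leave-one-out representation of the predicted values on the shared rows: for $i\in I\cap\tilde I$,
\[
\bx_i^T\hbb(I)=\prox[\gamma\ell_{y_i}](aU_i+\sigma G_i)+\op(1),\qquad \bx_i^T\hbb(\tilde I)=\prox[\gamma\ell_{y_i}](aU_i+\sigma\tilde G_i)+\op(1),
\]
with $(G_i,\tilde G_i)$ centered bivariate Gaussian, independent of $(y_i,U_i)$, and with $N(0,1)$ marginals supplied by the single-subsample theory \eqref{limit-a_*}--\eqref{limit-sigma2_by_q_gamma_2_delta}. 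The marginal statements follow from a leave-one-out argument: deleting observation $i$ from the fit decouples $\bx_i$ from $\hbb^{(i)}(I)$, so $\bx_i^T\hbb^{(i)}(I)=aU_i+\sigma G_i+\op(1)$ is conditionally Gaussian with variance pinned by \eqref{limit-sigma_2}, and the KKT stationarity for the re-inserted point turns $\bx_i^T\hbb^{(i)}(I)$ into its proximal image. The proximal identity $x-\prox[\gamma f](x)=\gamma f'(\prox[\gamma f](x))$ then rewrites the scores as $\ell_{y_i}'(\bx_i^T\hbb(I))=\gamma^{-1}(aU_i+\sigma G_i-\prox[\gamma\ell_{y_i}](aU_i+\sigma G_i))$.

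The crux is the master identity
\[
\hbb(I)^T\bP\hbb(\tilde I)=\frac{\gamma^2}{p}\sum_{i\in I\cap\tilde I}\ell_{y_i}'(\bx_i^T\hbb(I))\,\ell_{y_i}'(\bx_i^T\hbb(\tilde I))+\op(1),
\]
which simultaneously yields the correlation limit and validates the estimator \eqref{eq:thm_estimation}. I would establish it by Gaussian integration by parts (second-order Stein) applied to the shared rows $\bx_i$, $i\in I\cap\tilde I$, treating the joint dependence of both fits on those rows. Differentiating $\bx_i^T\hbb(\tilde I)$ in $\bx_i$ brings in the inverse Hessian $\bA_{\tilde I}=(\sum_{l\in\tilde I}\bx_l\ell_{y_l}''(\bx_l^T\hbb(\tilde I))\bx_l^T)^{-1}$ and the proximal derivative, and the resulting cross terms concentrate; the normalization that makes the weights collapse to $\gamma^2$ is precisely $\hat\gamma(\tilde I)$, the data-driven degrees-of-freedom correction in the theorem statement, so that $\hat\gamma(I),\hat\gamma(\tilde I)\to^P\gamma$ falls out of the same computation (its bracket is the empirical effective degrees of freedom, whose deterministic equivalent is $p/\gamma$ by \eqref{limit-sigma2_by_q_gamma_2_delta}).

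Granting the master identity, I close the argument as follows. Plugging the joint proximal representation of the shared scores into the right-hand side and using $|I\cap\tilde I|/p\to^P\q^2\delta$ (Remark~\ref{rm:gepmetric_dist}) gives
\[
\hbb(I)^T\bP\hbb(\tilde I)=\q^2\delta\,\gamma^2\,\E\big[\ell_y'(\prox[\gamma\ell_y](aU+\sigma G))\,\ell_y'(\prox[\gamma\ell_y](aU+\sigma\tilde G))\big]+\op(1),
\]
where $\E[G\tilde G]$ equals the normalized correlation $\rho\equiv\sigma^{-2}\hbb(I)^T\bP\hbb(\tilde I)$, because $(G_i,\tilde G_i)$ arise as the standardized values of $\bx_i^T\bP\hbb(I)$ and $\bx_i^T\bP\hbb(\tilde I)$, whose correlation over $\bx_i$ is $\hbb(I)^T\bP\hbb(\tilde I)/(\|\bP\hbb(I)\|\,\|\bP\hbb(\tilde I)\|)=\rho+\op(1)$ (both norms tend to $\sigma$). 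Thus $\rho$ satisfies the fixed-point equation defining $\eta$, and uniqueness of the solution in $[0,\q]$ (Proposition~\ref{prop:eta_exist_logit}) forces $\rho\to^P\eta$, i.e.\ $\hbb(I)^T\bP\hbb(\tilde I)\to^P\sigma^2\eta$; dividing by $\|\bP\hbb(I)\|\,\|\bP\hbb(\tilde I)\|\to^P\sigma^2$ gives the normalized correlation in \eqref{eq:thm_eta_logi}. The estimator \eqref{eq:thm_estimation} then follows by combining the master identity with $\hat\gamma(I)\hat\gamma(\tilde I)\to^P\gamma^2$, and \eqref{eq:thm_eta_distribution_logi} is the quantitative form of the joint proximal representation of the first paragraph.

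The main obstacle is establishing the joint representation and the master identity \emph{without} a strongly convex penalty. A Ridge term would stabilize both fits and let me invoke the AMP state-evolution results of \cite{loureiro2022fluctuations,bayati2011lasso} directly; here the Hessians $\bA_I,\bA_{\tilde I}$ are only controlled on the relevant subspace, and I must bound the perturbation induced by the shared rows when they are simultaneously present in both optimizations. My plan is to run a leave-one-out over each shared index $i\in I\cap\tilde I$ for both fits at once, and to use Gaussian--Poincar\'e / second-order Stein inequalities to show the perturbations are $\op(1)$ uniformly in $i$, upgrading the scalar fixed-point identity to genuine convergence in probability. Controlling this uniform decoupling, together with the concentration of the bilinear form $\hbb(I)^T\bP\hbb(\tilde I)$ around its deterministic equivalent, is where essentially all the difficulty lies.
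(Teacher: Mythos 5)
Your overall architecture coincides with the paper's: the ``master identity'' $p\,\hbb(I)^T\bP\hbb(\tilde I)=\gamma^2\sum_{i\in I\cap\tilde I}\ell_{y_i}'(\bx_i^T\hbb(I))\,\ell_{y_i}'(\bx_i^T\hbb(\tilde I))+\Op(\sqrt n)$ is exactly what the paper obtains from the second-order Stein identities (\Cref{lem:SOS} combined with \Cref{lm:relation_trV_trA_gamma}); the joint bivariate-Gaussian/proximal representation of the shared predictions is what \Cref{lem:W} applied to $\bW=[\bP\hbbeta\,|\,\bP\tbbeta]$ delivers; and the conclusion is reached through the same self-consistent equation. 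So the route is essentially the paper's, implemented with the same toolbox (second-order Stein, Gaussian--Poincar\'e, degrees-of-freedom normalization collapsing to $\gamma^2$).

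The one step that does not close as written is the last one: ``$\rho$ satisfies the fixed-point equation defining $\eta$, and uniqueness \dots forces $\rho\to^P\eta$.'' What you actually obtain is that the random correlation $\bar\eta$ satisfies $\bar\eta=\frac{\delta\q^2\gamma^2}{\sigma^2}\bar\varphi(\bar\eta)+\op(1)$, where $\bar\varphi$ is itself a \emph{random} function (an empirical average over the $(y_i,U_i)$, $i\in I\cap\tilde I$) evaluated at a random argument. The law of large numbers gives $\bar\varphi(t)\to^P$ its deterministic counterpart only pointwise in $t$, so uniqueness of the solution of the deterministic equation \eqref{eta_equation_logi} does not by itself transfer to the perturbed, random equation. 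The paper closes this gap with a contraction argument: it evaluates the random equation at the deterministic $\eta$ to get $\eta=\frac{\delta\q^2\gamma^2}{\sigma^2}\bar\varphi(\eta)+\op(1)$, subtracts, applies the mean-value theorem, and bounds $0\le\bar\varphi'(\bar t)\le \sigma^2/(\gamma^2\q\delta)+\op(1)$ uniformly in $\bar t$ (via \Cref{lm:varphi_derivative} and equation \eqref{3}, mirroring the $\q$-Lipschitz bound of \Cref{lm:eta_exist}), which yields $|\bar\eta-\eta|\le\q|\bar\eta-\eta|+\op(1)$ and hence $\bar\eta\to^P\eta$ since $\q<1$. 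You need this uniform-stability step (or an equivalent) to finish; uniqueness alone is not enough. A second, smaller point: to make ``$\E[G_i\tilde G_i]=\rho$'' meaningful you must define the correlation parameter as a conditional expectation given $(I,\tilde I,\bX\bbeta^*,\by)$, so that it is independent of the $(G_i,\tilde G_i)$ while the $(U_i,y_i)$ stay fixed inside the conditional expectation; the paper is explicit about this measurability bookkeeping, and it matters when you substitute the proximal representation into the sum defining the fixed-point equation.
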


\subsection{Numerical simulations in logistic regression}
\label{subsec:simu_logistic}
Similarly to \Cref{sec:simu_robust}, we check the accuracy of
\Cref{thm:logi} with numerical simulations. 
 Here, $(n, p)$ is fixed to $(5000, 500)$ so that $\delta=n/p=10$. For each signal strength $\|\bbeta_*\|\in\{1,2\}$, 
we compute the correlation 
and the inner product (see \eqref{eq:thm_eta_distribution_logi}) as we change the sub-sampling ratio $q=k/n\in[0.4, 1]$ and the estimate constructed by \eqref{eq:thm_estimation}. We perform $100$ repetitions. The theoretical limits $(\eta, \sigma^2\eta)$ 
are obtained by solving \eqref{eta_equation_logi} numerically.
\Cref{fig:logistic} shows that the theoretical curves ($q\mapsto \eta$ and $q\mapsto \sigma^2\eta$) match with the correlation and the inner product.
The estimator \eqref{eq:thm_estimation} is accurate for medium to large
subsample ratio $\q$, but appears slightly biased upwards for small values
of $\q$. 
The source of this slight upward bias is unclear,
although possibly due to the finite-sample nature of the simulations $(p=500)$.

In all simulations for logistic regression that we have performed,
the curve $\q\mapsto \eta$ is affine, as in the left plot 
in \Cref{fig:logistic}. The reason for this is unclear to us at this point
and this appears to be specific logistic regression; for instance the curve
$\q\mapsto\eta$ in \Cref{fig:huber}
for robust regression are clearly non-affine. Furthermore, the curve $\q \mapsto \sigma^2 \eta$ is monotonic, in contrast to the robust regression case, where it exhibits a U-shaped behavior under high noise levels. 
To further investigate the effect of the subsample ratio $\q$ on the risk $\sigma^2 \eta$, we present additional numerical simulations in \Cref{sec:additional_simulation_logit}, which reveals that the risk curve $\q \mapsto \sigma^2 \eta$ becomes U-shaped when the aspect ratio is much larger and the signal strength is small.

\begin{figure}
    \centering
    \begin{subfigure}[b]{0.49\textwidth}
        \centering
        \includegraphics[width=\textwidth]{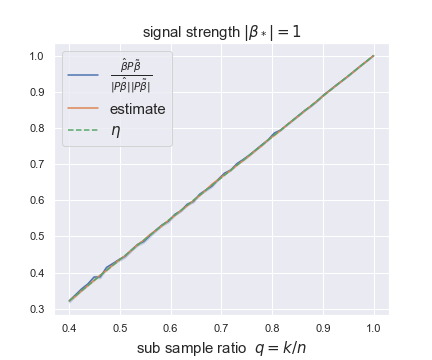}
    \end{subfigure}
    \begin{subfigure}[b]{0.49\textwidth}
        \centering
        \includegraphics[width=\textwidth]{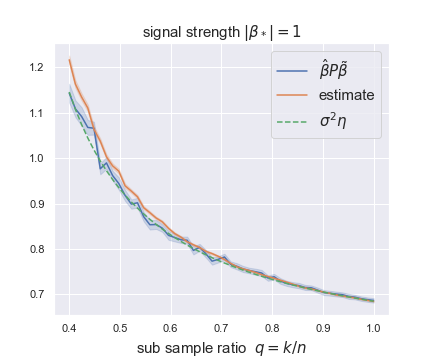}
    \end{subfigure}
    \begin{subfigure}[b]{0.49\textwidth}
        \centering
        \includegraphics[width=\textwidth]{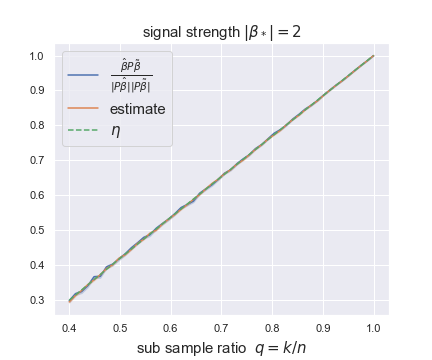}
    \end{subfigure}
    \begin{subfigure}[b]{0.49\textwidth}
        \centering
        \includegraphics[width=\textwidth]{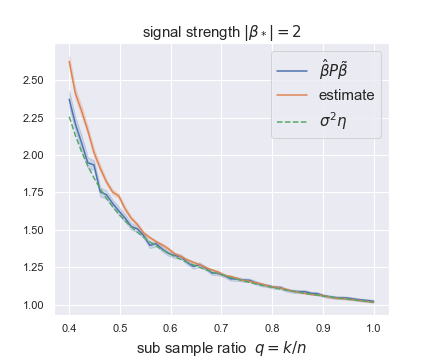}
    \end{subfigure}
    \caption{Comparison of simulation results, theoretical curves
        obtained by solving \eqref{eta_equation_logi} numerically,
    and estimate constructed by \eqref{eq:thm_estimation}, with $(n,p)$ fixed to $(5000, 500)$.} 
    \label{fig:logistic}
\end{figure}

\section{Proof of the main results}
\label{sec:proof}

We prove here \Cref{thm:logi,thm:robust} simultaneously
using the following notation:
\begin{itemize}
    \item In Robust regression (\Cref{thm:robust}), set $a=0$, let $(\sigma,\gamma)$
        be the unique solution to \eqref{Robust1}-\eqref{Robust2},
        let $\bbeta^*=0$ without loss of generality thanks to translation
        invariance; by the linear response $y_i = \bx_i^\top \bbeta_* + \eps_i$ from \Cref{assum:robust} and the change of variable $\bb \mapsto \bh = \bb-\bbeta_*$, we have $
            \hat{\bb}(I)-\bbeta_* = \hat\bh(I)$ with $\hat\bh(I) \in \argmin_{\bh} \sum_{i\in I} \rho(\bx_i^\top \bh + \eps_i)$, which does not depend on the signal $\bbeta_*$. Furthermore, let  $\bP = \bI_p$ and $U_i=0$.
    \item
        In logistic regression (\Cref{thm:logi}),
        let $(a,\sigma,\gamma)$ be the unique solution to \eqref{1}-\eqref{3},
        let $\bP=\bI_p - \bw\bw^\top$ for $\bw=\beta^*/\|\bbeta^*\|$,
        and let $U_i=\bx_i^T\bw$.
        Here, $\bX\bP$ is independent of $(y_i, U_i)_{i\in[n]}$.
\end{itemize}
Thanks to $\|\bX/\sqrt n\|_{\oper}\to^P 1+\delta^{-1/2}$ and
\eqref{eq_convergence} or \eqref{limit-sigma_2}-\eqref{limit-a_*},
we have $\|\bX\hbb(I)\|/\sqrt{I} \le K$ for $K=2 \q^{-1/2}(1+\delta^{-1/2})(a^2+\sigma^2)^{1/2}$ with probability approaching one.
Thus $\PP(\hbb(I)=\hbbeta(I))\to 1$ for $\hbbeta(I)$ in \eqref{reg},
so we may argue with $\hbbeta = \hbbeta(I)$.
Similarly for $\tilde I$ we have $\PP(\hbb(I)=\hbbeta(I))\to 1$
for $\hbbeta(\tilde I)$ in \eqref{reg},
and we may argue with $\tbbeta = \hbbeta(\tilde I)$.
Let also $\bpsi,\tbpsi$ be defined in \Cref{lm:derivative_formula}
(in particular, we have $\psi_i=0$ of $i\notin I$
and $\psi_i = -{\ell}_{y_i}(\bx_i^T\hbb(I))$ in the high-probability
event $\hbb(I)=\hbbeta$,
and similarly for $\tbpsi,\hbb(\tilde I),\tbbeta$.)

By \Cref{lem:SOS} and \Cref{lm:relation_trV_trA_gamma}
from the auxiliary lemmas, we have 
$$ 
p \hat{\bbeta}^\top\bP\tilde{\bbeta} 
= \gamma^2 \bpsi^\top  \tilde \bpsi
+ \Op(\sqrt n)
$$
where $\bpsi^\top  \tilde \bpsi = \sum_{i\in I \cap \tilde I}\psi_i\tilde \psi_i$.
With $n/p=\delta$ and
$|I\cap \tilde I| = n\q^2 + \Op(n^{1/2})$ thanks to
the explicit formulae for the
expectation and variance of the hyper-geometric distribution,
we have
\begin{equation}
\tilde{\bbeta}^\top \bP \hat{\bbeta} = \delta \q^2\gamma^2 \bpsi^T\tbpsi /|I\cap \tilde I|
+ \Op(n^{-1/2}).
\label{eq:key}
\end{equation}
By the Cauchy--Schwarz inequality and the concentration 
of sampling without replacement (see  \Cref{lm:sample_without_rep_psi} for details), 
the absolute value of $\bpsi^T\tbpsi /|I\cap \tilde I| = \sum_{i\in I\cap\tilde I}\psi_i\tilde \psi_i/|I\cap \tilde I|$ is smaller than
\begin{align*}
  \Bigl(\frac{1}{|I\cap \tilde I|}\sum_{i\in I\cap\tilde I}\tilde \psi_i^2
  \Bigr)^{1/2}
  \Bigl(\frac{1}{|I\cap \tilde I|}\sum_{i\in I\cap\tilde I}\psi_i^2
  \Bigr)^{1/2}
  \le
  \Bigl(\frac{1}{|\tilde I|}\sum_{i\in \tilde I}\tilde \psi_i^2
  \Bigr)^{1/2}
  \Bigl(\frac{1}{|\tilde I|}\sum_{i\in I}\psi_i^2
  \Bigr)^{1/2}
  +\op(1)
  = \frac{\sigma^2}{\q \delta\gamma^2}  + \op(1)  
\end{align*}
thanks to \eqref{eq_convergence} (in robust regression)
or \eqref{limit-sigma2_by_q_gamma_2_delta} (in logistic regression)
for the last equality.
Combined with \eqref{eq:key}, 
we have proved
$$
|\tilde{\bbeta}^\top\bP\hat{\bbeta}| \le \delta \q^2\gamma^2 \frac{\sigma^2}{\q\delta\gamma^2} + \op(1)=  \q \sigma^2 + \op(1).
$$
Let $\bar \E$ be the conditional
expectation given $({I,\tilde I},\bX\bbeta^*,\by)$
(In robust regression, $\bbeta^*=0$ so $\bar\E$ is the conditional expectation
given $\{{I,\tilde I}, (\eps_i)_{i\in[n]}\}$). By the Gaussian Poincar\'e inequality, one can show the following concentration (see \Cref{eq:application_Gaussian_Poincare}) $\tilde{\bbeta}^\top\bP\hat{\bbeta}  = \bar{\E}[\tilde{\bbeta}^\top\bP\hat{\bbeta}] + \Op(n^{-1/2})$. 
 Combined with the previous result $|\tilde{\bbeta}^\top\bP\hat{\bbeta}| \le \q \sigma^2 + \op(1)$, we obtain the following:
\begin{equation}
\bar{\eta}\equiv \sigma^{-2} \bar{\E}[\tilde{\bbeta}^\top \bP\hat{\bbeta}]
\quad\text{ satisfies }\quad
\begin{cases}
\bar \eta =
\tilde{\bbeta}^\top \bP \hat{\bbeta}/\sigma^2
+ \Op(n^{-1/2}),
\\
|\bar \eta |\le \q + \op(1).
\end{cases}
\label{concentration-eta-bar}
\end{equation} 
Similarly, by \Cref{eq:application_Gaussian_Poincare} we have the concentration $\bar \E[ \bpsi^T \tbpsi ] /{|I\cap \tilde I|}
=
\bpsi^T \tbpsi/{|I\cap \tilde I|} 
+ \Op(n^{-1/2})$.
Combined with $\tilde{\bbeta}^\top \bP \hat{\bbeta} = \delta \q^2\gamma^2 \bpsi^T\tbpsi /|I\cap \tilde I|
+ \Op(n^{-1/2})$ from \eqref{eq:key}  and $\bar \eta =
\tilde{\bbeta}^\top \bP \hat{\bbeta}/\sigma^2
+ \Op(n^{-1/2})$ from \eqref{concentration-eta-bar}, we get 
\begin{equation}
\bar \eta = 
 \frac{\delta \q^2 \gamma^2}{\sigma^2}
 \frac{1}{|I\cap \tilde I|}\bar{\E}\Bigl[
\sum_{i\in I\cap\tilde I}\psi_i\tilde \psi_i
\Bigr]
+ {\Op(n^{-1/2})}
\label{ineq_eta_bar}
\end{equation}
For an overlapping observation $i\in I\cap \tilde I$,
using \Cref{lm:derivative_formula} and the moment inequality in 
\Cref{lem:W}
conditionally on $(I,\tilde I, \bX\bbeta^*,\by)$ and $(\bx_l)_{l\ne i}$, 
applied to the standard normal $\bP\bx_i + \bw Z$ (for $Z\sim N(0,1)$ independent of everything else) and
 $\bW=[\bP\hat{\bbeta}|\bP\tilde \bbeta]\in\R^{p\times 2}$, 
 we find
 for the indicator function $\mathbb I\{i\in I\cap \tilde I\}$ that
 $$\mathbb I\{i\in I\cap \tilde I\}
 \E[\text{LHS}_i \mid I,\tilde I]
\le  C \E[\sum_{j=1}^p \|\frac{\partial \hat{\bbeta}}{\partial x_{ij}}\|^2 + \|\frac{\partial \tilde \bbeta}{\partial x_{ij}}\|^2]$$
where
$$
\text{LHS}_i
\eqqcolon
\Bigl\|
\begin{pmatrix}
    \bx_i^\top\bP\hat\bbeta - \tr[\bP \bA]\psi_i - (\hat{\bbeta}^\top \bP\bA\bX^\top \bD)\be_i \\
    \bx_i^\top\bP\tilde\bbeta - \tr[\bP \tilde \bA]\tilde \psi_i - (\tilde{\bbeta}^\top\bP\tilde\bA\bX^\top \tilde\bD)\be_i
\end{pmatrix}
-
(\bW^\top\bW)^{1/2} \bg_i
\Bigr\|^2 
$$
for all $i\in I\cap \tilde{I}$ with $\bg_i\sim N(\bm{0}_2, \bI_2)$. 
After summing over $i\in I\cap\tilde I$ and using
\eqref{bound_frobenius_norm_derivatives},
we get $\sum_{i\in I\cap\tilde I}\E[\text{LHS}_i \mid I,\tilde I] \le C$ some
constants $C$ independent of $n,p$, 
and hence 
$\sum_{i\in I\cap\tilde I}\text{LHS}_i = \Op(n^{-1})$.

Using \eqref{limit-sigma_2} in logistic regression
or \eqref{eq_convergence} in robust regression,
we know 
$\|\bP\hat\bbeta\|^2\to^P \sigma^2$ and similarly $\|\bP\tilde\bbeta\|^2\to^P\sigma^2$,
as well as $\tr[\bP\bA]\to^P\gamma$,
and $\tr[\bP\tilde\bA]\to^P\gamma$
by \Cref{lm:relation_trV_trA_gamma}.
Using the Lipschitz
inequality for the matrix square root
$\|\sqrt{\bM}-\sqrt{\bN}\|_{\oper}
\le \|(\sqrt{\bM}+\sqrt{\bN})^{-1}\|_{\oper}
\|\bM - \bN\|_{\oper}$
for positive definite matrices
$\bN,\bM$ (see \cite{van1980inequality} or 
\cite[Problem X.5.5]{bhatia2013matrix})
which follows from $\bx^T(\sqrt{\bM}+\sqrt{\bN})\bx\lambda
= \bx^T(\bM-\bN)\bx$ for any unit eigenvector $\bx$ of $\sqrt{\bM}-\sqrt{\bN}$
with eigenvalue $\lambda$,
here we get
\begin{equation}
    \label{39}
\Big\|
\begin{pmatrix}
    1 & \bar\eta \\
    \bar\eta & 1 \\
\end{pmatrix}^{-1}
\Big\|_{\oper}
= \frac{1}{1-\bar\eta}
\le \frac{2}{1-q}
\text{ and }
    \Big\|
    \sigma
\begin{pmatrix}
    1 & \bar\eta \\
    \bar\eta & 1 \\
\end{pmatrix}^{1/2}
- (\bW^\top\bW)^{1/2}
\Big\|_{\oper}
= \op(1)
\end{equation}
on the event $|\bar\eta|\le (1+q)/2<1$ 
which has probability approaching one thanks to \eqref{concentration-eta-bar}.
Using the moment bounds 
\eqref{eq:moment_bound} to bound from above 
{
$\sum_{i=1}^n((\hat{\bbeta}^\top \bP\bA\bX^\top \bD)\be_i)^2
= \|\bD\bX\bA\bP\hbbeta\|^2$,
}
we find
$$
\frac{1}{n}
\sum_{i\in I\cap \tilde I}
\|
\begin{pmatrix}
    \bx_i^\top\bP\hat\bbeta   - \gamma\psi_i \\
    \bx_i^\top\bP\tilde\bbeta - \gamma\tilde \psi_i
\end{pmatrix}
-
\sigma
\begin{pmatrix}
    1 & \bar\eta \\
    \bar\eta & 1 \\
\end{pmatrix}^{1/2}
\bg_i
\|^2 = \op(1) + \op(1)\frac 1 n \sum_{i=1}^n\bigl(\|\bg_i\|^2+\psi_i^2+\tilde \psi_i^2\bigr),
$$
and thanks to
$n^{-1}\sum_{i=1}^n(\|\bg_i\|^2+\psi_i^2+\tilde \psi_i^2)= \Op(1)$,
the previous display converges to 0 in probability.
Since
$\bx_i^T\bP\hbbeta = \bx_i^T\hbbeta - U_i \bw^T\hbbeta$ for $U_i = \bx_i^T \bm{w} =^d N(0,1)$
and given $\hat{\bbeta}^\top\bw \to^P a$
by \eqref{limit-a_*}, together with
$n^{-1} \sum_{i=1} U_i^2 = \Op(1)$ since
$\sum_{i=1} U_i^2\sim \chi^2_n$
we find
$$
\frac{1}{n}
\sum_{i\in I\cap \tilde I}
\|
\begin{pmatrix}
    \bx_i^\top\hat{\bbeta} - a U_i - \gamma\psi_i - \sigma G_i \\
    \bx_i^\top \tilde{\bbeta} - aU_i - \gamma\tilde \psi_i - \sigma\tilde G_i
\end{pmatrix}
\|^2
= \op(1) 
~~
\text{ where}
~~
\begin{pmatrix}
    G_i \\
    \tilde G_i
\end{pmatrix}
=
\begin{pmatrix}
    1 & \bar \eta \\
    \bar\eta & 1 \\
\end{pmatrix}^{1/2}
\bg_i.
$$
With probability approaching one, the second term in \eqref{reg}
is 0 for the large enough $K$ that we took at the beginning, and in this event the modified M-estimator 
$\hat{\bbeta}$ equals to the original M-estimator $\hat{\bb}(I)$ 
so that $\psi_i = - {\ell}_{y_i}(\bx_i^\top\hat{\bb})$
(cf. \Cref{lem:reg}), and similarly for $\tilde \bpsi$.
We have established
\begin{align*}
\frac{1}{n}\sum_{i\in I\cap\tilde{I}} \|\bx_i^\top\hat\bb + \gamma {\ell}_{y_i}'(\bx_i^\top\hat\bb) - a U_i - \sigma G_i
    \|^2 \equiv \frac{1}{n}\sum_{i\in I\cap\tilde{I}} \|-\text{Rem}_i\|_2^2 = \op(1).
\end{align*}
where we define $\text{Rem}_i$ by
$\bx_i^\top\hat\bb + \gamma {\ell}_{y_i}'(\bx_i^\top\hat\bb) =  a U_i + \sigma G_i + \text{Rem}_i$. Note that \( \bx_i^\top\hat\bb = \prox[\gamma{\ell}_{y_i}](a U_i + \sigma G_i + \text{Rem}_i) \) by definition of the proximal operator.
Now set \( \hat p_i = \prox[\gamma{\ell}_{y_i}](a U_i + \sigma G_i) \).
Because \( \prox[\gamma{\ell}_{y_i}](\cdot) \) is 1-Lipschitz,
\[
    \Bigl(
    \sum_{i\in I\cap \tilde I}
    \|\hat p_i - \bx_i^\top \hat\bb\|^2
    \Bigr)^{1/2}
    \le
    \Bigl(
        \sum_{i\in I\cap \tilde I}
    \|\text{Rem}_i\|^2
    \Bigr)^{1/2} = \op({\sqrt n}).
\]
Similarly, a proximal approximation holds for $\bx_i^\top \tilde{\bbeta}$
using $(U_i,\tilde G_i)$ instead.
We have to be a little careful here because $\bar \eta$
is independent of the $(G_i,\tilde G_i)$ but not of the $(U_i,y_i)$.
Using that $|{\ell}_{y_i}'|\le 1$, and that
$\bar \E[|A-B|] = \op(1)$ if $A,B$ are bounded random variables such that $|A-B|=\op(1)$, \eqref{ineq_eta_bar} gives
\begin{align*}
\bar \eta 
&= \frac{\delta \q^2\gamma^2}{\sigma^2}
\sum_{i\in I\cap \tilde I}
    \bar{\E}\Bigl[
\frac{ 
{\ell}_{y_i}'(\prox[\gamma{\ell}_{y_i}](a U_i+\sigma G_i))
{\ell}_{y_i}'(\prox[\gamma{\ell}_{y_i}](a U_i+\sigma \tilde G_i)
)
}{|I\cap \tilde I|}
\Bigr]
+ \op(1)
\end{align*}
where inside the conditional expectation $\bar\E[\cdot]$,
$(\bar\eta,U_i,y_i, I, \tilde{I})$ are fixed and integration is performed
with respect to the distribution of $(G_i,\tilde G_i)$. Thus, the above display can be rewritten  as 
\begin{align}\label{eq:random_eta_fixed_point}
    \bar\eta
    = \frac{\delta \q^2\gamma^2}{\sigma^2}
    \bar{\varphi}(\bar\eta) + \op(1)    
\end{align}
where $\bar{\varphi}:[-1,1]\to\R$ is the random function defined as
\begin{align*}
    \bar\varphi(t)
&=
\frac{1}{|I\cap\tilde I|}
\sum_{i\in I\cap \tilde I}
\bar\E\Bigl[
{\ell}_{y_i}'\Bigl(\prox[\gamma{\ell}_{y_i}](a U_i+\sigma G_i^t)\Bigr)
{\ell}_{y_i}'\Bigl(\prox[\gamma{\ell}_{y_i}](a U_i+\sigma \tilde G_i^t)\Bigr)
\Bigr]
\\
&=
\frac{1}{|I\cap\tilde I|}
\sum_{i\in I\cap \tilde I}
\iint
{\ell}_{y_i}'\Bigl(\prox[\gamma{\ell}_{y_i}](a U_i+\sigma g)\Bigr)
{\ell}_{y_i}'\Bigl(\prox[\gamma{\ell}_{y_i}](a U_i+\sigma \tilde g)\Bigr)
\phi_t(g,\tilde g)\mathrm{d}g\mathrm{d}\tilde g
\end{align*}
for all $t\in[-1,1]$, 
where $\phi_t:\R^2 \to (0, +\infty)$ is the density of two jointly centered normal
$(G^t,\tilde G^t)$ with $\E[(G^t)^2]=\E[(\tilde G^t)^2]=1$ and $\E[G^t\tilde G^t]=t$,
and in the first line $(G_i^t,\tilde G_i^t)\sim \phi_t$
is independent of $(\bar\eta,U_i,y_i)_{i\in[n]}$.
Notice that $\bar\varphi(t)$ can be viewed as an i.i.d. sum of random variables of $(y_i, U_i)$. Furthermore, since $I\cap\tilde I\subset [n]$ is independent of $(y_i, U_i)_{i\in [n]}$ and $|I\cap \tilde I|/n\to^p q^2(>0)$ by the property of hyper-geometric distribution (\Cref{rm:gepmetric_dist}), the weak law of large number implies the point-wise convergence:
$$
\forall t\in [-1,1], \quad 
\bar\varphi(t) \to^p \E\Bigl[
    {\ell}_{y}'\Bigl(\prox[\gamma{\ell}_{y}](a U+\sigma G^t)\Bigr)
    {\ell}_{y}'\Bigl(\prox[\gamma{\ell}_{y}](a U+\sigma \tilde G^t)\Bigr)
    \Bigr], 
$$
where $(y, U)=^d (y_i, U_i)$ and $(G^t, \tilde G^t)\sim \phi_t$. Taking $t=\eta$ for the deterministic solution $\eta$ 
of \eqref{eta_equation_robust} (with $a=0$ in robust regression)
or \eqref{eta_equation_logi} (in logistic regression),
 we get $\bar\varphi(\eta) \to^p  \sigma^2\eta/(\delta \q^2\gamma^2)$. Rearranging this result, we are left with 
\begin{align}\label{eq:deterministic_eta_fixed_point}
    \eta = \frac{\delta \q^2\gamma^2}{\sigma^2}\bar\varphi(\eta) +\op(1). 
\end{align}
Taking the difference between \eqref{eq:random_eta_fixed_point} and \eqref{eq:deterministic_eta_fixed_point}, 
using the mean-value theorem,
\begin{align}\label{eq:difference}
    \bar \eta-\eta
    =
    \frac{\delta \q^2\gamma^2}{\sigma^2} \Bigl(\bar\varphi(\bar\eta) - \bar\varphi(\eta)\Bigr) + \op(1)
    =
    \frac{\delta \q^2\gamma^2}{\sigma^2} \Bigl(\bar\eta-\eta\Bigr)
    \bar\varphi'(\bar{t})+ \op(1)    
\end{align}
for some (random) $\bar{t}$ between $\bar\eta$ and $\eta$.
By calculation similar to \eqref{varphi-d}-\eqref{F'}
thanks to \Cref{lm:varphi_derivative}, if $(G_i^t,\tilde G_i^t)$ has density $\phi_t$, with probability $1$, $\bar\varphi'(t)$ is non-negative for all $t\in[-1,1]$ and uniformly bounded from above as 
\begin{align*}
0\le \bar\varphi'(t)&=
\frac{ 1 }{|I\cap \tilde I|}
\frac{\sigma^2}{\gamma^2}
\sum_{i\in I\cap \tilde I}
\bar \E\Bigl[
\frac
{ \gamma {\ell}_{y_i}''(\prox[\gamma{\ell}_{y_i}](aU_i+\sigma G_i^t)) }
{ 1 +  \gamma {\ell}_{y_i}''(\prox[\gamma{\ell}_{y_i}](aU_i+\sigma G_i^t)) }
\frac
{\gamma  {\ell}_{y_i}''(\prox[\gamma{\ell}_{y_i}](aU_i+\sigma \tilde G_i^t))}
{1 + \gamma  {\ell}_i''(\prox[\gamma{\ell}_{y_i}](aU_i+\sigma \tilde G_i^t))}
\Bigr]\\
&\le
\frac{ 1 }{|I\cap \tilde I|}
\frac{\sigma^2}{\gamma^2}
\sum_{i\in I\cap \tilde I}
\bar \E\Bigl[
\frac
{ \gamma {\ell}_{y_i}''(\prox[\gamma{\ell}_{y_i}](aU_i+\sigma G_i^t)) }
{ 1 +  \gamma {\ell}_{y_i}''(\prox[\gamma{\ell}_{y_i}](aU_i+\sigma G_i^t)) }
\Bigr]
\\
&=
\frac{ 1 }{|I\cap \tilde I|}
\frac{\sigma^2}{\gamma^2}
\sum_{i\in I\cap \tilde I}
\int\Bigl[
\frac
{ \gamma {\ell}_{y_i}''(\prox[\gamma{\ell}_{y_i}](aU_i+\sigma g)) }
{ 1 +  \gamma {\ell}_{y_i}''(\prox[\gamma{\ell}_{y_i}](aU_i+\sigma g)) }
\Bigr]\frac{e^{-g^2/2}}{\sqrt{2\pi}}\mathrm{d}g
\quad\text{since } G_i^t\sim N(0,1). 
\end{align*}
Note that the RHS is independent of $t\in[-1,1]$. Furthermore, by the same argument we used to derive the limit of $\bar\varphi$ above, the law of large numbers and the nonlinear system (equation \eqref{Robust2} in robust regression and equation \eqref{3} in logistic regression) imply $\text{RHS}\to^p \sigma^2/(\gamma^2 q\delta)$. Putting this result and the above inequality of $\bar\varphi'(t)$ with $t=\bar{t}$ together, we get the following estimate of $\bar{\varphi}'(\bar{t})$:
$$0 \le \bar{\varphi}'(\bar{t}) \le  \sigma^2/(\gamma^2 q\delta) + \op(1). 
$$
Combining this result and \eqref{eq:difference}, we are left with 
$$
|\bar\eta - \eta|
=
|\bar \eta - \eta| \frac{\delta \q^2\gamma^2}{\sigma^2} |\bar\varphi'(\bar{t})| + \op(1)
\le
|\bar \eta - \eta| \frac{\delta \q^2\gamma^2}{\sigma^2} \frac{\sigma^2}{\gamma^2 \q\delta}
+ \op(1) = \q|\bar{\eta}-\eta| + \op(1)
$$
and $\bar\eta - \eta = \op(1)$ thanks to $\q\in (0,1)$. 
Since $\bar\eta = \hat{\bbeta}^\top\bP\tilde{\bbeta}/\sigma^2 + \op(1)$
by \eqref{concentration-eta-bar}, the proof 
of \eqref{eq:thm_eta_robust} and \eqref{eq:thm_eta_logi}
is complete.
Next, \eqref{eq:thm_estimation}
follows from 
\Cref{lem:SOS} and \Cref{lm:relation_trV_trA_gamma}.

Finally for \eqref{eq:thm_distribution_robust}
and \eqref{eq:thm_eta_distribution_logi},
by symmetry
$\E[\text{LHS}_i \mid I, \tilde I]$ is the same for all $i\in I\cap \tilde I$.
In particular, the maximum of the conditional expectation is the same as 
the average over $I\cap \tilde I$, so that
$\sum_{i\in I\cap \tilde I}\E[\text{LHS}_i \mid I, \tilde I] \le C$
proved above gives
$\max_{i\in I\cap \tilde I}
\E[\text{LHS}_i \mid I, \tilde I] = \Op(1/n)$
since $I\cap \tilde I$ has cardinality of order $n$.
Finally, we have
\begin{equation}\bW^\top\bW\to^P
\sigma^2
\begin{pmatrix}
    1 & \eta
\\  \eta & 1
\end{pmatrix},
\qquad
\Bigl(\bW^\top\bW\Bigr)^{1/2}\to^P
\sigma
\begin{pmatrix}
    1 & \eta
\\  \eta & 1
\end{pmatrix}^{1/2},
\label{cont_mapping_matrix_sq}
\end{equation}
by continuity of the matrix square root and the continuous
mapping theorem (or, alternatively, by reusing the argument in \eqref{39}).
Using again $\tr[\bP\bA]\to^P\gamma$, $\hbbeta^T\bw\to^Pa$,
$(\hat{\bbeta}^\top \bP\bA\bX^\top \bD)\be_i\to^P 0$,
and similarly for $\tbbeta$, combined with \eqref{cont_mapping_matrix_sq},
we obtain \eqref{eq:thm_distribution_robust} 
and \eqref{eq:thm_eta_distribution_logi}.

\section{Auxiliary lemmas}
\label{sec:auxilliary_lemma}

\subsection{Approximate multivariate normality}\label{subsec:multivariate_normality}

\begin{proposition}
    \label{lem:W}
    Let $\bz\sim N(\bm{0}_p, \bI_p)$ and 
    let $\bW:\R^n\to\R^{p\times M}$ be a locally Lipschitz function with $M\le p$. 
    Then there exists $\bg\sim N(\bm{0}_M, \bI_M)$ such that
    $$
    \E\Bigl[\bigl\|\bW(\bz)^\top \bz - \sum_{j=1}^p  \frac{\partial \bW(\bz)^\top \be_j}{\partial z_j} - \Bigl\{\bW(\bz)^\top \bW(\bz)\Bigr\}^{1/2} \bg
    \bigr\|^2\Bigr]\le \C \sum_{j=1}^p \E\Bigl[\bigl\| \frac{\partial \bW(z)}{\partial z_j}\bigr\|_{\F}^2\Bigr], 
    $$
    where $\{\cdot\}^{1/2}$ is the square root of the positive semi-definite matrix. 
\end{proposition}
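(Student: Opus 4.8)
The plan is to show that the Stein-corrected statistic $\bm a = \bW(\bz)^\top\bz - \sum_{j=1}^p \partial(\bW(\bz)^\top\be_j)/\partial z_j \in \R^M$ is close in $L^2$ to a centered Gaussian with covariance $S = \bW(\bz)^\top\bW(\bz)$, and then to realize that Gaussian as $S^{1/2}\bg$ for a genuine $\bg\sim N(\bm 0_M,\bI_M)$. Note that $\bm a$ is already centered: for fixed $\bu\in\R^M$, one has $\bu^\top\bm a = (\bW\bu)^\top\bz - \diver(\bW\bu)$, so $\E[\bu^\top\bm a]=0$ by first-order Gaussian integration by parts. I would begin with two harmless reductions: mollify $\bW$ so that it is smooth with bounded Jacobian (this inflates $\sum_j\E\|\partial_j\bW\|_\F^2$ by at most a constant factor, and the conclusion passes to the limit), and regularize $S$ to $S+\eps\bI_M$ so that its square root is invertible, filling the kernel with an independent standard Gaussian and letting $\eps\downarrow 0$ at the end so that $\bg$ is exactly standard normal.

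The second step reduces everything to scalar functionals. For fixed $\bu$, the vector field $\bm f = \bW\bu:\R^p\to\R^p$ has $\bu^\top\bm a = \bm f^\top\bz - \diver\bm f$, and a direct computation applying Stein's identity repeatedly yields the exact second-order identity $\E[(\bu^\top\bm a)^2] = \E\|\bm f\|^2 + \E[\tr((\nabla\bm f)^2)]$, where $\nabla\bm f$ is the Jacobian. Since $|\tr((\nabla\bm f)^2)|\le\|\nabla\bm f\|_\F^2 \le \|\bu\|^2\sum_j\|\partial_j\bW\|_\F^2$, this gives $|\E[(\bu^\top\bm a)^2] - \E[\bu^\top S\bu]|\le\|\bu\|^2\sum_j\E\|\partial_j\bW\|_\F^2$. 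Polarizing in $\bu$ controls the entire discrepancy $\E[\bm a\bm a^\top]-\E[S]$ by the right-hand-side derivative term, so the covariance of $\bm a$ matches the target $\E[S]$ up to the allowed error.

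The third step — the crux — upgrades this second-moment agreement to an $L^2$ coupling with $S^{1/2}\bg$. I would use a Gaussian interpolation argument in the spirit of Chatterjee's second-order Poincaré inequality: with $\bz'$ an independent copy and $\bz_\theta = \bz\cos\theta + \bz'\sin\theta$, represent the centered $\bm a$ through the Ornstein--Uhlenbeck/interpolation representation of centered functionals along $\theta\in[0,\pi/2]$. The resulting increments are, conditionally on the interpolation path, jointly Gaussian with a conditional covariance of the form $\int_0^{\pi/2}(\ldots)\,d\theta$ that concentrates at $S$, its fluctuation being governed by Jacobian differences $\nabla\bW(\bz_\theta)-\nabla\bW(\bz)$ and hence again by $\sum_j\E\|\partial_j\bW\|_\F^2$. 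Taking $\bg$ to be the regularized $S^{-1/2}$ applied to this Gaussian part produces an exactly standard normal vector with $\E\|\bm a - S^{1/2}\bg\|^2$ bounded by a universal constant times $\sum_j\E\|\partial_j\bW\|_\F^2$; removing the mollification and sending $\eps\downarrow 0$ then finishes the argument.

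I expect the main obstacle to be exactly this third step. The target covariance $S$ is itself random, so this is a normal approximation with a \emph{random} covariance, and the lemma demands a $\bg$ that is \emph{exactly} $N(\bm 0_M,\bI_M)$ rather than merely approximately Gaussian. Controlling the conditional-covariance fluctuation, handling the degeneracy of $S$ on its kernel, and keeping the error linear in $\sum_j\E\|\partial_j\bW\|_\F^2$ with an absolute constant are the delicate points; the scalar second-order Stein identity of the second step is what guarantees that the error stays at first-derivative order rather than generating uncontrolled higher-order contributions.
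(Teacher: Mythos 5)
Your proposal correctly identifies the structure of the problem: the Stein-corrected statistic $\bm a = \bW(\bz)^\top\bz - \sum_j \partial_j(\bW(\bz)^\top\be_j)$ is centered, its second moments match $\E[\bW^\top\bW]$ up to the first-derivative error (your step two is a sound second-order Stein computation), and the crux is to upgrade this to an $L^2$ coupling with $\{\bW^\top\bW\}^{1/2}\bg$ for an \emph{exactly} standard normal $\bg$. But that crux is precisely where your argument stops being a proof. The interpolation/second-order-Poincar\'e route you sketch has a concrete problem: the increments of $\bm a$ along an Ornstein--Uhlenbeck path are not conditionally Gaussian for a nonlinear $\bW$, and any Chatterjee-style normal approximation applied to $\bm a$ controls the error through \emph{second} derivatives of $\bW$ (since $\bm a$ already contains first derivatives), which are neither assumed to exist beyond local Lipschitzness nor controlled by $\sum_j\E\|\partial_j\bW\|_{\F}^2$. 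The assertion that the conditional covariance ``concentrates at $S$'' with fluctuation governed only by first-derivative quantities is exactly the statement that needs proof, and it is not supplied. The $\eps$-regularization of $S$ also leaves a loose end: the coupling $\bg=\bg_\eps$ changes with $\eps$, and extracting a limiting $\bg$ that is still exactly $N(\bm 0_M,\bI_M)$ and still satisfies the $L^2$ bound requires an additional compactness argument you do not give.

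The paper resolves the crux by a different and much more economical device: draw an independent copy $\tilde\bz$ of $\bz$, take the SVD of $\tilde\bW=\bW(\tilde\bz)$ to produce a partial isometry $\tilde\bQ\in\R^{M\times p}$ with orthonormal rows satisfying $\tilde\bW=\tilde\bQ^\top(\tilde\bW^\top\tilde\bW)^{1/2}$, and set $\bg=\tilde\bQ\bz$. Independence of $\tilde\bQ$ from $\bz$ makes $\bg$ exactly $N(\bm 0_M,\bI_M)$ with no regularization and no issue at the kernel of $S$. The whole estimate then reduces to one application of the second-order Stein inequality to $\bm U(\bz)=\bW(\bz)^\top-\{\bW(\bz)^\top\bW(\bz)\}^{1/2}\tilde\bQ$ conditionally on $\tilde\bz$, with $\|\bm U\|_{\F}$ and its derivatives controlled by the Frobenius-Lipschitz property of the matrix square root, $\|(\tilde\bW^\top\tilde\bW)^{1/2}-(\bW^\top\bW)^{1/2}\|_{\F}\le\sqrt2\,\|\bW-\tilde\bW\|_{\F}$, and finally $\E\|\bW-\tilde\bW\|_{\F}^2$ is bounded by the derivative sum via the Gaussian Poincar\'e inequality. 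If you want to complete your write-up, the independent-copy construction of $\bg$ is the missing idea; without it (or an equivalent), the third step of your plan does not go through.
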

This moment inequality is a matrix-generalization of  \cite[Proposition 13]{bellec2022derivatives} and \cite[Theorem 2.2]{bellec2023debiasing}. 
It is particularly useful to show that as $p\to+\infty$ with fixed $M$, and provided that $\sum_{j=1}^p \E[\| (\partial \bW(z)/\partial z_j)\|_{\F}^2]$ is suitably bounded, 
the following random vector (which is mean-zero by Stein's lemma)
$$
\bW(\bz)^\top \bz - \sum_{j=1}^p  \frac{\partial \bW(\bz)^\top \be_j}{\partial z_j} \in \R^M
$$
is approximately multivariate normal (in the $L_2$ sense) with covariance
approximated by
$\bW(\bz)^\top\bW(\bz)$. In our paper, as shown in \Cref{sec:proof}, we apply this inequality with $W = [P\hat\bbeta, P\tilde{\bbeta}]\in \R^{p\times 2}$, using the derivative formula \eqref{eq:derivative_formula} in \Cref{lm:derivative_formula}. 

\begin{proof}
    Let $\tilde \bz$ be an independent copy of $\bz$ and let $\tilde \bW = \bW(\tilde \bz)$. Noting $M\le p$, we denote the SVD of $\tilde\bW\in\R^{p\times M}$ by
    $\tilde \bW = \sum_{m=1}^M s_m \bu_m \bv_m^\top$ where 
    $s_1\ge s_2\ge ... \ge s_M\ge 0$ are the singular values. Here, we allow some $s_m$ to be $0$ to have $M$ terms by adding extra terms if necessary,
    so that $(\bv_1,...,\bv_M)$ is an orthonormal basis in $\R^M$. Now we define $\tilde{\bQ} = \sum_{m=1}^M \bv_{m} \bu_{m}^\top\in \R^{M\times p}$,  
    so that $\tilde{\bW} = \tilde{\bQ}^\top (\tilde \bW^\top \tilde \bW)^{1/2}$ thanks to $(\tilde \bW^\top \tilde \bW)^{1/2} = \sum_{m=1}^M s_m \bv_m\bv_m^\top$. 
    Define $\bg = \tilde{\bQ} \bz$ 
    and note that$\bg\sim N(\bm{0}_M,\bI_M)$ since $\tilde \bW =\bW(\tilde \bz)$ is independent of $\bz$. 
    With $\bW=\bW(\bz)$ (omitting the dependence in $\bz$), using $\bg = \tilde\bQ \bz$ and $\tilde{\bW} = \tilde{\bQ}^\top (\tilde \bW^\top \tilde \bW)^{1/2}$, we have
    $$
    \tilde \bW^\top \bz - (\bW^\top\bW)^{1/2}\bg
    =
    \bigl[(\tilde \bW^\top \tilde \bW)^{1/2}  - (\bW^\top\bW)^{1/2}\bigr]\bg.
    $$
    Applying the Second order Stein formula in 
    \cite{bellec2021second}
    (see also 5.1.13 in \cite{bogachev1998gaussian})
    to $\bm{U}(\bz)=\bW(z)^\top-\{\bW(z)^\top \bW(\bz)\}^{1/2}\tilde \bQ\in\R^{M\times p}$
    conditionally on $(\tilde \bz,\tilde \bQ)$, 
    we find
    \begin{align}
    \E\Bigl[\|\bW^\top\bz - \sum_{j=1}^p \frac{\partial (\bW^\top-\{\bW^\top\bW\}^{1/2}\tilde \bQ)\be_j}{\partial z_j} - \{\bW^\top\bW\}^{1/2}\bg \|^2\Bigr] 
    &= \E\bigl[\|\bm{U} \bm{z} - \sum_{j=1}^p \frac{\partial\bm{U}\be_j}{\partial z_j} \|^2\bigr]
    \nonumber \text{ by $\bg = \tilde{\bQ}\bz$}
  \\&\le
  \E\Bigl[\|\bm{U}(\bz)\|_{\F}^2
  +\sum_{j=1}^p \|\frac{\partial \bm{U}(\bz)}{\partial z_j}\|_{\F}^2
    \Bigr].
  \label{SOS}
    \end{align}
    Since $\tilde \bW^\top = (\tilde \bW^\top \tilde \bW)^{1/2}\tilde \bQ$,
    by the triangle inequality for $\|\bm{U}\|_{\F} =
  \|\bW^\top - \{\bW^\top\bW\}^{1/2}\tilde\bQ\|_{\F}$, 
    \begin{align}
        \|\bm{U}\|_{\F} 
  &\le \|\bW-\tilde \bW\|_{\F} + \|\tilde \bW^\top  - (\bW^\top\bW)^{1/2}\tilde \bQ\|_{\F} \nonumber\\
  &=
  \|\bW-\tilde \bW\|_{\F} + \|[(\tilde \bW^\top \tilde \bW) - (\bW^\top\bW)^{1/2}]\tilde \bQ\|_{\F}
  \nonumber\\
    &\le
  \|\bW-\tilde \bW\|_{\F} + \sqrt 2\|\bW-\tilde\bW\|_{\F}
  \label{piece1}
    \end{align}
    thanks to $\|\tilde \bQ\|_{\oper}\le 1$ and using, for the last line,
    inequality
    \begin{equation}
    \|(\tilde \bW^\top\tilde \bW)^{1/2}-(\bW^\top\bW)^{1/2}\|_{\F}\le \sqrt 2 \|\bW-\tilde \bW\|_{\F} 
    \label{lip}
    \end{equation}
    from \cite{araki1981inequality,chun1989perturbation}.
    Now for the second term in \eqref{SOS}, using the inequality $(a+b)^2 \le 2(a^2 + b^2)$ for $  \sum_{j=1}^p \|(\partial \bm{U})/(\partial z_j)\|_{\F}^2
         = \sum_{j=1}^p \| (\partial/\partial z_j) (\bW^\top - (\bW^\top\bW)^{1/2}\tilde \bQ)\|_{\F}^2$, 
    \begin{align}
  \sum_{j=1}^p \|\frac{\partial \bm{U}}{\partial z_j}\|_{\F}^2 \le
          2\sum_{j=1}^p
          \| \frac{\partial \bW}{\partial z_j}\|_{\F}^2
          +
          \| \frac{\partial (\bW^\top\bW)^{1/2}}{\partial z_j}\tilde \bQ\|_{\F}^2
          \le
          4\sum_{j=1}^p
          \| \frac{\partial \bW}{\partial z_j}\|_{\F}^2
        \label{piece2}
    \end{align}
    where for the last line we used again inequality \eqref{lip} valid
    for any two $\tilde \bW,\bW$, which grants
    \begin{equation}
    \|\frac{\partial (\bW^\top\bW)^{1/2}}{\partial z_j}\|_{\F}
    \le \sqrt{2} \|\frac{\partial \bW}{\partial z_j}\|_{\F}
    \label{eq:lip-derivative}
    \end{equation}
    by definition of the directional
    derivative and continuity of the Frobenius norm.

    It remains to bound from above the divergence term appearing 
    in the left-hand side of \eqref{SOS}.
    For each $m\in[M]$,
    $
    \be_m^\top \sum_{j=1}^p  (\partial/\partial z_j)((\bW^\top\bW)^{1/2}) \cdot  \tilde \bQ \be_j
    $
    is the divergence of the vector field
    $\R^p \ni \bz\mapsto \tilde{\bQ}^\top (\bW^\top\bW)^{1/2}\be_m \in\R^p$.
    Since $\tilde \bQ\in\R^{M\times p}$ is fixed and its rank is at most $M$,
    the Jacobian of this vector field is of rank $M$ at most.
    Thus, the divergence (trace of the Jacobian) is smaller
    than $\sqrt M$ times the Frobenius norm of the Jacobian.
    This gives for every $m\in [M]$ the following bound on the square
    of the divergence:
    $$
    |\be_m^\top \sum_{j=1}^p \frac{\partial(\bW^\top\bW)^{1/2}}{\partial z_j} \tilde \bQ \be_i|^2
    \le M \sum_{j=1}^p \|\be_m^\top \frac{\partial (\bW^\top\bW)^{1/2}\tilde \bQ}{\partial z_j}\|^2.
    $$
    Summing over $m\in[M]$ we find
    \begin{equation}
        \label{piece3}
    \|\sum_{j=1}^p \frac{\partial(\bW^\top\bW)^{1/2}}{\partial z_j} \tilde \bQ \be_j
    \|^2
    \le M\sum_{j=1}^p \|
    \frac{\partial (\bW^\top\bW)^{1/2}\tilde \bQ}{\partial z_j}
        \|_{\F}^2.
    \end{equation}
    Since $\|\tilde \bQ\|_{\oper}\le 1$, we can further upper-bound by removing $\tilde \bQ$ inside the Frobenius norm, and use again \eqref{eq:lip-derivative}.
    Combining the pieces \eqref{SOS}, \eqref{piece1}, \eqref{piece2}, \eqref{piece3}, we find
    \begin{align*}
    \E\Bigl[\|\bW^\top\bz - \sum_{j=1}^p \frac{\partial \bW^\top\be_j}{\partial z_j} - (\bW^\top\bW)^{1/2}\bg \|^2\Bigr]
  \le
  \C
  \E\Bigl[\|\bW-\tilde \bW\|_{\F}^2
  +\sum_{j=1}^p \|\frac{\partial \bW}{\partial z_j}\|_{\F}^2
    \Bigr].
\end{align*}
    Since $\bW,\tilde \bW$ are iid, using the triangle inequality for the Frobenius norm  with $(a+b)^2 \le 2(a^2+b^2)$ and 
     the Gaussian Poincar\'e inequality finally yield 
    $
    \E[\|\bW-\tilde \bW\|_{\F}^2]\le 4\E[\|\bW-\E[{\bW}]\|_{\F}^2] \le C [
        \sum_{i=1}^n \|(\partial/\partial) z_i \bW\|_{\F}^2]
    $
        and the proof is complete.
\end{proof}
\subsection{Derivative of $F(\eta)$}
\begin{lemma}\label{lm:varphi_derivative}
Let $G$ and $Z$ be independent $N(0,1)$ random variables. 
Then for any Lipschitz continuous
function $f$ with $\E[f(G)^2]<+\infty$, the map 
$\varphi: \eta \in [-1,1] \mapsto \E[f(G)f(\eta G + \sqrt{1-\eta^2}Z)]\in\R$ has derivative
$\varphi'(\eta) = \E[f'(G)f'(\eta G + \sqrt{1-\eta^2}Z)]$. 
\end{lemma}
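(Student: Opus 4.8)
The plan is to first establish the identity when $f$ is smooth, by differentiating under the expectation and applying Gaussian integration by parts, and then to remove the smoothness assumption by mollification. Throughout write $\tilde G = \tilde G(\eta) = \eta G + \sqrt{1-\eta^2}Z$ as in the statement, so that $(G,\tilde G)$ is centered Gaussian with unit marginal variances and $\E[G\tilde G] = \eta$. I will work on the open interval $\eta \in (-1,1)$, where the pair $(G,\tilde G)$ has a density with respect to Lebesgue measure; the endpoints are degenerate and are not needed for the application, where $\eta\in[0,\q]$ with $\q<1$.

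First I would treat the case where $f$ is twice continuously differentiable with $f'$ and $f''$ bounded. For fixed $(G,Z)$ the map $\eta \mapsto f(G)f(\tilde G(\eta))$ is differentiable with derivative $f(G)f'(\tilde G)\,W$, where $W \coloneqq \frac{d}{d\eta}\tilde G = G - \frac{\eta}{\sqrt{1-\eta^2}}Z$. On any compact subinterval of $(-1,1)$ the coefficient $\eta/\sqrt{1-\eta^2}$ is bounded, so $|f(G)f'(\tilde G)W|$ is dominated by an integrable function (using $|f'|\le L$, the linear growth $|f(G)|\le|f(0)|+L|G|$, and the Gaussian moments of $G,Z$); this licenses differentiation under the expectation and gives $\varphi'(\eta) = \E[f(G)f'(\tilde G)W]$.

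Next I would apply Gaussian integration by parts (Stein's lemma) in the independent pair $(G,Z)$. Splitting $W = G - \frac{\eta}{\sqrt{1-\eta^2}}Z$ and using $\partial_G\tilde G = \eta$ and $\partial_Z\tilde G = \sqrt{1-\eta^2}$, integration by parts in $G$ gives $\E[G\,f(G)f'(\tilde G)] = \E[f'(G)f'(\tilde G)] + \eta\,\E[f(G)f''(\tilde G)]$, while integration by parts in $Z$ gives $\E[Z\,f(G)f'(\tilde G)] = \sqrt{1-\eta^2}\,\E[f(G)f''(\tilde G)]$. Since $\frac{\eta}{\sqrt{1-\eta^2}}\cdot\sqrt{1-\eta^2} = \eta$, the two $\E[f(G)f''(\tilde G)]$ contributions cancel exactly, leaving $\varphi'(\eta) = \E[f'(G)f'(\tilde G)]$ as claimed. (Equivalently, one could use the heat-type identity $\partial_\eta p_\eta = \partial_x\partial_y p_\eta$ for the bivariate normal density $p_\eta$ and integrate by parts twice, but the Stein route is cleaner and consistent with the rest of the paper.)

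Finally I would remove the extra smoothness. Given merely Lipschitz $f$ with constant $L$, set $f_\eps = f * \phi_\eps$ for a smooth mollifier $\phi_\eps$; then $f_\eps$ is smooth with $|f_\eps'|\le L$, $f_\eps \to f$ uniformly, and $f_\eps' \to f'$ Lebesgue-a.e.\ by Rademacher's theorem. The smooth case gives $\varphi_\eps(\eta) = \varphi_\eps(0) + \int_0^\eta \E[f_\eps'(G)f_\eps'(\tilde G(s))]\,ds$. For $|s|<1$ the law of $(G,\tilde G(s))$ is absolutely continuous, so $f_\eps'(G)f_\eps'(\tilde G(s)) \to f'(G)f'(\tilde G(s))$ almost surely, and the uniform bound $|f_\eps'|\le L$ yields, by dominated convergence, both $\varphi_\eps\to\varphi$ pointwise and convergence of the integrand to $\psi(s) \coloneqq \E[f'(G)f'(\tilde G(s))]$. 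A further dominated-convergence argument (the bivariate densities are uniformly dominated on compact $s$-intervals of $(-1,1)$) shows $\psi$ is continuous, so passing to the limit gives $\varphi(\eta) = \varphi(0) + \int_0^\eta \psi(s)\,ds$, and the fundamental theorem of calculus yields $\varphi'(\eta) = \psi(\eta) = \E[f'(G)f'(\tilde G)]$. I expect this last step to be the main obstacle: the a.e.\ convergence $f_\eps'\to f'$ must be transferred to convergence against the \emph{law} of $(G,\tilde G)$, which relies on the absolute continuity of that law and hence confines the clean argument to $|\eta|<1$.
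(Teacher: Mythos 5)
Your proposal is correct, and it reaches the same two ingredients as the paper (differentiation under the expectation, then Gaussian integration by parts), but it executes the Stein step differently and therefore needs an extra mollification layer. The paper rotates to the independent pair $A=\eta G+\sqrt{1-\eta^2}Z=\tilde G$ and $B=\sqrt{1-\eta^2}G-\eta Z$, observes that the derivative factor $G-\tfrac{\eta}{\sqrt{1-\eta^2}}Z$ is exactly $B/\sqrt{1-\eta^2}$, and applies Stein's formula once in $B$; since only $f'$ ever appears, the argument applies directly to a Lipschitz $f$ (differentiable a.e., hence a.s.\ at the Gaussian points) with no smoothing. You instead integrate by parts separately in $G$ and in $Z$, which produces two $\E[f(G)f''(\tilde G)]$ terms that cancel; because $f''$ need not exist for Lipschitz $f$, you are forced to first prove the identity for $f\in C^2$ and then mollify, writing $\varphi_\eps(\eta)=\varphi_\eps(0)+\int_0^\eta\E[f_\eps'(G)f_\eps'(\tilde G(s))]\,ds$ and passing to the limit. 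That limiting argument is handled correctly: you transfer the Lebesgue-a.e.\ convergence $f_\eps'\to f'$ through the absolute continuity of the law of $(G,\tilde G(s))$ for $|s|<1$, and the continuity of $s\mapsto\E[f'(G)f'(\tilde G(s))]$ does follow from the uniform Gaussian domination of the bivariate densities on compact subintervals (or from Scheff\'e's lemma), so the fundamental theorem of calculus applies at every $\eta\in(-1,1)$. The restriction to the open interval matches what the paper implicitly does (its factor $(1-\eta^2)^{-1/2}$ also degenerates at $\pm1$) and is harmless for the application, where the fixed point lies in $[0,\q]$ with $\q<1$. In short: the paper's change of variables buys a one-line Stein application valid directly for Lipschitz $f$; your route is longer but more mechanical, and the cancellation of the $f''$ terms plus the careful mollification make it a complete, self-contained alternative.
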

\begin{proof}
Since $f$ is Lipschitz and $N(0,1)$ has no point mass, $f$ is differentiable at $G\sim N(0,1)$ with probability $1$, so by the dominated convergence theorem, we have 
$$\varphi'(\eta) = \E\Bigl[
    f(G) f'\bigl(\eta G +\sqrt{1-\eta^2}Z\bigr) 
    \bigl(G-\frac{\eta}{\sqrt{1-\eta^2}} Z\bigr) 
\Bigr] =  \frac{1}{\sqrt{1-\eta^2}} \E\Bigl[f\bigl(\eta A + \sqrt{1-\eta^2}B\bigr) f'(A) B\Bigr],
$$
where we defined $A=\eta G + \sqrt{1-\eta^2}Z$ and $B=\sqrt{1-\eta^2}G-\eta Z$ so that $(A, B)$ are again independent with $A, B=^d N(0,1)$. Using Stein's formula for $B$ conditionally on $A$, noting that $(1-\eta^2)^{1/2}$ is cancelled out, we complete the proof. 
\end{proof}
\subsection{Modified loss and moment
inequalities}\label{subsec:modified_loss}
This subsection provides useful approximations to study
two estimators $\hbb(I),\hbb(\tilde I)$ trained on two subsampled
datasets indexed in $I$ and $\tilde I$. These approximations are used
in the proof of the main result in \Cref{sec:proof},
with the key ingredient being \Cref{lem:SOS}.
The approximations in this subsection are obtained as a consequence
of the moment inequalities given 
in \Cref{lm:chi_square,lm:second_order_stein} developed in
\cite{bellec2020out} for estimating the out-of-sample error
of a single estimator.
Because the moment inequalities in \Cref{lm:chi_square,lm:second_order_stein}
requires us to bound from above expectations involving $\hbb(I),\hbb(\tilde I)$
and their derivatives, we resort to the following modification
of the M-estimators (introduced in \cite[Appendix D.4]{bellec2022observable})
to guarantee that any finite moment of
$\hbb(I),\hbb(\tilde I)$
and their derivatives are suitably bounded.

\begin{lemma}
    \label{lem:reg}
    Let $\hat{\bb}(I) \in \argmin_{\bb\in\R^p} \sum_{i\in I} {\ell}_{y_i} (\bx_i^\top\bb)$ be the M-estimator fitted on the subsampled data $(\bx_i, y_i)_{i\in I}$. 
    Now, for any positive constant $K>0$ and any twice continuous differentiable function $H:\R\to\R$ such that $H'(u)=0$ for $u\le 0$ and $H'(u)=1$ for $u\ge 1$, we define the modified M-estimator $\hat{\bbeta}(I)$ as 
    \begin{equation}
        \label{reg}
    \hat{\bbeta}(I) \in \argmin_{\bbeta\in\R^p}  \mL(\bX{\bbeta}) \quad \text{where} \quad \mL(\bu) = \sum_{i\in I} {\ell}_{y_i}(u_i) + |I| H\Bigl(\frac{1}{2|I|}\sum_{i\in I} u_i^2 - \frac{K}{2}\Bigr)
    \end{equation}
    for $\bu\in\R^n$.
    If the vanilla M-estimator $\hat{\bb}(I)$  exists with high probability and 
    $\PP(\|\bX\hat{\bb}(I)\|^2/n \le K )\to 1$ holds for a sufficiently large $K>0$,
    then on the event $\{\|\bX\hat{\bb}(I)\|^2/n \le K \}$ the vanilla and modified M-estimators coincide, i.e.,  $\hat{\bb}(I)=\hat{\bbeta}(I)$. 
\end{lemma}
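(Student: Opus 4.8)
My plan is to treat the claim as a deterministic statement conditional on the data and then invoke the high-probability norm control. I will show that on the event $E=\{\tfrac1{|I|}\sum_{i\in I}(\bx_i^\top\hat{\bb}(I))^2\le K\}$, which is exactly the event on which the argument of $H$ is nonpositive at the vanilla solution, the minimizer of the modified objective $\mL(\bX\,\cdot\,)$ coincides with $\hat{\bb}(I)$; since the norm control established at the start of \Cref{sec:proof} gives $\|\bX\hat{\bb}(I)\|/\sqrt{|I|}\le K$ with probability tending to one, this yields $\PP(\hat{\bb}(I)=\hat{\bbeta}(I))\to1$. Write $\mL(\bX\bbeta)=L_0(\bbeta)+P(\bbeta)$, where $L_0(\bbeta)=\sum_{i\in I}\ell_{y_i}(\bx_i^\top\bbeta)$ is the vanilla objective and $P(\bbeta)=|I|\,H\!\big(g(\bbeta)\big)$ with $g(\bbeta)=\tfrac1{2|I|}\sum_{i\in I}(\bx_i^\top\bbeta)^2-\tfrac K2$. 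The only facts I will need about $P$ are that $P\ge0$ everywhere and that $P$ vanishes wherever $g\le0$: since $H'\equiv0$ on $(-\infty,0]$, $H$ is constant there and we may normalize $H\equiv0$ on $(-\infty,0]$ without changing the $\argmin$, while taking $H$ nondecreasing (the natural smooth ramp interpolating the two prescribed slopes) gives $H\ge0$ and hence $P\ge0$.

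The core of the argument is a sandwiching inequality that deliberately avoids assuming the modified objective is convex. On $E$ one has $g(\hat{\bb}(I))\le0$, so $P(\hat{\bb}(I))=0$ and therefore $\mL(\bX\hat{\bb}(I))=L_0(\hat{\bb}(I))=\min_{\bbeta}L_0$. For every $\bbeta\in\R^p$ we then have $\mL(\bX\bbeta)=L_0(\bbeta)+P(\bbeta)\ge L_0(\bbeta)\ge\min_{\bbeta}L_0=\mL(\bX\hat{\bb}(I))$, so $\hat{\bb}(I)$ is a global minimizer of $\mL(\bX\,\cdot\,)$ and $\min\mL(\bX\,\cdot\,)=\min L_0$. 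Conversely, any minimizer $\hat{\bbeta}(I)$ of $\mL(\bX\,\cdot\,)$ satisfies $L_0(\hat{\bbeta}(I))+P(\hat{\bbeta}(I))=\min L_0$ with the two summands bounded below by $\min L_0$ and $0$ respectively, so equality forces $L_0(\hat{\bbeta}(I))=\min L_0$; that is, $\hat{\bbeta}(I)$ also minimizes the vanilla objective.

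It remains to upgrade ``minimizes $L_0$'' to ``equals $\hat{\bb}(I)$'', which is where uniqueness of the vanilla minimizer enters. The Hessian of $L_0$ is $\bX_I^\top\diag\big(\ell_{y_i}''(\bx_i^\top\bbeta)\big)_{i\in I}\bX_I$, where $\bX_I$ collects the rows indexed by $I$. Under \Cref{assum:robust} (where $\rho''>0$) and in logistic regression (where $\ell''>0$ identically) this diagonal matrix is positive definite, and since $|I|=\q\delta p>p$ with the $\bx_i$ i.i.d.\ Gaussian, $\bX_I$ has full column rank almost surely; hence the Hessian is positive definite and $L_0$ is strictly convex. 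Therefore $\argmin_{\bbeta}L_0=\{\hat{\bb}(I)\}$, and $L_0(\hat{\bbeta}(I))=\min L_0$ forces $\hat{\bbeta}(I)=\hat{\bb}(I)$, proving the coincidence on $E$.

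The main point to be careful about is precisely this decoupling: because $H$ is not assumed convex, $\mL(\bX\,\cdot\,)$ may fail to be convex, so the equality of minimizers cannot be read off from a first-order condition for the modified problem and must instead be extracted from the sandwich $\mL(\bX\,\cdot\,)\ge L_0\ge\min L_0$ together with strict convexity of $L_0$ alone. A secondary bookkeeping issue is the normalization relating the event to the penalty: what the argument genuinely requires is $\tfrac1{|I|}\sum_{i\in I}(\bx_i^\top\hat{\bb}(I))^2\le K$ (so that $g(\hat{\bb}(I))\le0$), and this is exactly the content of the high-probability bound $\|\bX\hat{\bb}(I)\|/\sqrt{|I|}\le K$ invoked at the beginning of \Cref{sec:proof}.
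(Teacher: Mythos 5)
Your argument is correct in substance, and it is worth noting that the paper itself gives no proof of \Cref{lem:reg} at all --- it defers to Appendix~D.4 of \cite{bellec2022observable}, where the modification is introduced. So your write-up is a self-contained replacement rather than a variant of an in-paper argument. The standard route (and the one implicit in the cited reference) is to observe that with $H$ convex and nondecreasing the composite penalty $\bu\mapsto |I|H\bigl(\tfrac{1}{2|I|}\sum_{i\in I}u_i^2-\tfrac K2\bigr)$ is convex, so $\mL(\bX\,\cdot\,)$ is convex, and since $H'(g(\hat\bb(I)))=0$ on the event in question the gradient of $\mL(\bX\,\cdot\,)$ at $\hat\bb(I)$ coincides with that of the vanilla objective and vanishes; convexity then promotes this critical point to a global minimizer. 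Your sandwich $\mL(\bX\,\cdot\,)\ge L_0\ge\min L_0$ combined with strict convexity of $L_0$ alone reaches the same conclusion without ever using convexity of the modified objective, which is slightly more robust, and your use of strict convexity of $L_0$ (valid here since $\ell_{y_i}''>0$ and $\bX_I$ has full column rank a.s.\ when $|I|=\q\delta p>p$) correctly pins down uniqueness in both directions.

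Two caveats, both of which you flag and both of which are defects of the statement rather than of your proof. First, monotonicity of $H$ on $[0,1]$ is genuinely needed: the hypotheses as written ($H$ of class $C^2$ with $H'=0$ on $(-\infty,0]$ and $H'=1$ on $[1,\infty)$) permit an $H$ that dips far below $H(0)$ on $(0,1)$, in which case the penalty can be very negative at some $\bbeta$ with $g(\bbeta)\in(0,1)$ and the modified minimizer need not coincide with the vanilla one --- so the lemma is literally false without $H'\ge 0$ (or convexity of $H$), and your normalization $H\equiv 0$ on $(-\infty,0]$ plus $H$ nondecreasing is the intended reading. Second, the normalization mismatch between the stated event $\{\|\bX\hat\bb(I)\|^2/n\le K\}$ and the penalty threshold $\tfrac{1}{2|I|}\sum_{i\in I}u_i^2\le \tfrac K2$ is real (the former only gives $\tfrac{1}{|I|}\sum_{i\in I}(\bx_i^\top\hat\bb(I))^2\le K/\q$); it is harmless because $K$ is a free, sufficiently large constant, and the event actually used in \Cref{sec:proof} is $\|\bX\hbb(I)\|/\sqrt{|I|}\le K$, consistent with your reading.
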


\begin{lemma}\label{lm:derivative_formula}
Assume that ${\ell}_{y_i}$ is twice-continuously differentiable
with ${\ell}_{y_i}''(u) \vee |{\ell}_i'(u)| \le 1$ and ${\ell}_{y_i}''(u)>0$ for all $u\in\R$.
Fix any $K>0$ and let $\hat{\bbeta}$ be the M-estimator with the modified loss
\eqref{reg}
and let $\bpsi = -\nabla\mL(\bX\hat{\bbeta})$. Then, the maps $\bX\in\R^{n\times p}\mapsto \hat{\bbeta}(\by,\bX)\in\R^p$ and $\bX\in\R^{n\times p}\mapsto \bpsi(\by,\bX)\in \R^n$
are continuously differentiable, with its derivatives given by 
\begin{align}\label{eq:derivative_formula}
    \frac{\partial \hat{\bbeta}}{\partial x_{ij}} = \bA (\be_j \psi_i-\bX^\top \bD \be_i \hat\beta_j), \quad 
    \frac{\partial \bpsi }{\partial x_{ij}} = -\bD \bX \bA \be_j \hat\psi_i - \bV \be_i \hat\beta_j
\end{align}
for all $i\in[n], j\in [p]$, 
where $\bD = \nabla^2 \mL(\bX\hat{\bbeta})\in\R^{n\times n}$, $\bA=(\bX^\top \bD \bX)^{-1}\in\R^{p\times p}$, $\bV=\bD-\bD \bX\bA\bX^\top \bD\in \R^{n\times n}$. 
Here,  $\sum_{i\in I}\|\bx_i^\top\hat\bbeta\|^2$, $\|\bpsi\|^2$ and $\|\bD\|_{\oper}$ are bounded from above as
\begin{equation}
\sum_{i\in I}(\bx_i^\top\hat\bbeta)^2 \le |I| (K+2), \quad \|\bpsi\|^2 \le |I| (1+\sqrt{K+2})^2, \quad \|\bD\|_{\oper} \le C(K, \q, \delta)
\label{eq:as_bound}
\end{equation}
with probability $1$ 
and $\bm{0}_{n\times n} \preceq \bV \preceq \bD$.
Finally, we have
for all integer $m\ge 1$
\begin{equation}
\E[\|\hat{\bbeta}\|^m] \vee \E[\|n\bA\|_{\oper}^m] \le
\begin{cases}
    C(m, K, \q, \delta, \rho, \mathsf{Law}(\eps_i)) & \text{under \Cref{assum:robust}}, \\
    C(m, K, \q, \delta) & \text{under \Cref{assum:logi}}.
\end{cases}
\label{eq:moment_bound}
\end{equation}
\end{lemma}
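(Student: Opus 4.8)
The plan is to establish the four assertions of the lemma in turn --- the derivative formulas \eqref{eq:derivative_formula}, the almost-sure bounds \eqref{eq:as_bound}, the ordering $\bm 0 \preceq \bV \preceq \bD$, and the moment bounds \eqref{eq:moment_bound} --- with only the last requiring real work. For the derivative formulas and $C^1$ regularity I would start from the stationarity condition $\bX^\top\nabla\mL(\bX\hat\bbeta)=\bm 0_p$, i.e. $\bX^\top\bpsi=\bm 0_p$ with $\bpsi=-\nabla\mL(\bX\hat\bbeta)$. Since $\mL$ is twice continuously differentiable and the Hessian in $\bbeta$ equals $\bX^\top\bD\bX$ with $\bD=\nabla^2\mL(\bX\hat\bbeta)$, which is positive definite with probability one because $\ell''_{y_i}>0$, $H$ is convex, and $\bX_I$ has full column rank when $|I|=\q\delta p>p$, the implicit function theorem yields that $\bX\mapsto\hat\bbeta$ is continuously differentiable. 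Differentiating the identity $\bX^\top\nabla\mL(\bX\hat\bbeta)=\bm 0$ entrywise in $x_{ij}$, using $\partial\bX/\partial x_{ij}=\be_i\be_j^\top$ and the chain rule, produces a linear system whose solution is exactly $\partial\hat\bbeta/\partial x_{ij}=\bA(\be_j\psi_i-\bX^\top\bD\be_i\hat\beta_j)$; substituting this into $\partial\bpsi/\partial x_{ij}=-\bD(\be_i\hat\beta_j+\bX\,\partial\hat\bbeta/\partial x_{ij})$ and collecting the $\hat\beta_j$ terms through $\bV=\bD-\bD\bX\bA\bX^\top\bD$ gives the second formula.

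For the almost-sure bounds the key device is a one-dimensional convexity argument along the ray $t\mapsto t\hat\bbeta$. Writing $R=\tfrac1{2|I|}\sum_{i\in I}(\bx_i^\top\hat\bbeta)^2$ and $u_i=\bx_i^\top\hat\bbeta$, the restricted objective $g(t)=\sum_{i\in I}\ell_{y_i}(tu_i)+|I|H(t^2R-K/2)$ is convex and minimized at $t=1$; if $\sum_{i\in I}u_i^2$ exceeded $|I|(K+2)$ then $s_0:=R-K/2>1$, so $H'(s_0)=1$ and $g'(1)=\sum_{i\in I}\ell'_{y_i}(u_i)u_i+\sum_{i\in I}u_i^2>0$ by Cauchy--Schwarz and $|\ell'_{y_i}|\le1$, contradicting optimality at $t=1$. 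This gives $\sum_{i\in I}u_i^2\le|I|(K+2)$; the bound on $\|\bpsi\|^2$ follows from $|\psi_i|\le1+|u_i|$ and Cauchy--Schwarz, and the bound on $\|\bD\|_{\oper}$ from $\bD=\diag(\ell''_{y_i}+H'(s_0))_{i\in I}+\tfrac{H''(s_0)}{|I|}\bu_I\bu_I^\top$ together with $|\ell''|\le1$, $0\le H'\le1$, bounded $H''$, and the just-proved control of $\|\bu_I\|^2$. The ordering $\bm 0\preceq\bV\preceq\bD$ is purely algebraic: restricting to the $I$-block where $\bD_I\succ0$, factor $\bV_I=\bD_I^{1/2}(\bI-\Pi)\bD_I^{1/2}$ with $\Pi=\bD_I^{1/2}\bX_I\bA\bX_I^\top\bD_I^{1/2}$ an orthogonal projection, so $\bm 0\preceq\bI-\Pi\preceq\bI$ yields both inequalities simultaneously.

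The moment bounds are the crux and where I expect the genuine difficulty. For $\E[\|\hat\bbeta\|^m]$ I would combine the almost-sure bound $\|\bX_I\hat\bbeta\|^2\le|I|(K+2)$ with $\|\bX_I\hat\bbeta\|^2\ge\sigma_{\min}(\bX_I)^2\|\hat\bbeta\|^2$, reducing everything to the well-controlled negative moments of the smallest singular value of the $|I|\times p$ Gaussian matrix $\bX_I$ (finite and of order $|I|^{-m/2}$ since $|I|>p$ at fixed aspect ratio $\q\delta>1$). The truly hard part is $\E[\|n\bA\|_{\oper}^m]$, which requires a lower bound $\lambda_{\min}(\bX_I^\top\bD_I\bX_I)\gtrsim n$ with controlled moments. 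One cannot bound $\bD_I$ below by a constant, since $\ell''_{y_i}(\bx_i^\top\hat\bbeta)$ can be arbitrarily small for observations with large predicted value or (in robust regression) large residual $\eps_i-u_i$. My plan is to retain only the observations $i$ for which $|\bx_i^\top\hat\bbeta|$ and, in the robust case, $|\eps_i|$ lie below fixed thresholds, on which $\ell''_{y_i}\ge c_0>0$; the almost-sure bound $\sum_{i\in I}u_i^2\le|I|(K+2)$ and the noise tail bound the number of discarded observations by a small fraction $\kappa|I|$, so the retained set still has cardinality exceeding $p$ because $\q\delta>1$.

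The main obstacle is that this retained set depends on the random, data-dependent $\hat\bbeta$, so a fixed-matrix singular-value bound does not apply. This must be upgraded to a uniform statement $\inf_{\|\bbeta\|\le C_0}\lambda_{\min}\big(\sum_{i:\,\mathrm{good}(\bbeta)}\bx_i\bx_i^\top\big)\gtrsim n$, obtained through a finite net of the ball $\{\|\bbeta\|\le C_0\}$, the uniform continuity in $\bbeta$ of the retained empirical covariance (with net fineness chosen from the modulus of continuity of $\ell''$ on the relevant compact range), a union bound, and Gaussian concentration of the empirical sums, followed by a standard truncation to pass from a high-probability event to an $m$-th moment bound. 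The dependence of the resulting constant on $\rho$ and $\mathsf{Law}(\eps_i)$ in the robust case enters exactly here, through the threshold needed to discard only a fraction $\kappa$ and through $c_0=\ell''$ at that threshold. This uniform degeneracy control, rather than any individual step, is the technical heart, and it is precisely the type of estimate furnished by the moment-inequality machinery of \cite{bellec2020out,bellec2022observable} that I would invoke or adapt.
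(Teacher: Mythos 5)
Your proposal is correct and follows the same overall strategy as the paper, but it is worth noting that the paper outsources most of the lemma: the derivative formulas, the almost-sure bounds \eqref{eq:as_bound}, and the moment bound under \Cref{assum:logi} are all cited from Appendix~D.4 of \cite{bellec2022observable}, and only the moment bound for $\|n\bA\|_{\oper}$ under \Cref{assum:robust} is proved in the text. Your self-contained arguments for the outsourced parts (implicit function theorem on the stationarity condition $\bX^\top\bpsi=\bm 0_p$; the one-dimensional convexity argument along $t\mapsto t\hat\bbeta$ forcing $H'(s_0)=1$ and $g'(1)>0$ when $\sum_{i\in I}u_i^2>|I|(K+2)$; the projection factorization $\bV_I=\bD_I^{1/2}(\bI-\Pi)\bD_I^{1/2}$) are all sound. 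For the hard step, both you and the paper truncate to a ``good'' subset of observations on which $|\bx_i^\top\hat\bbeta|$ and (in the robust case) $|\eps_i|$ are bounded, so that $\rho''$ is bounded below there and $\bX^\top\bD\bX\succeq c_0\sum_{i\in\check I}\bx_i\bx_i^\top$; you correctly identify that the data-dependence of this retained set is the technical heart. Where you diverge is in resolving it: you propose a uniform bound over $\bbeta$ in a ball via an $\varepsilon$-net (which requires some care, since the indicator-defined retained set is discontinuous in $\bbeta$ --- you would in effect have to work with the smoothed quantity $\sum_i\ell_{y_i}''(\bx_i^\top\bbeta)\bx_i\bx_i^\top$ and a net of mesh $\sim p^{-1/2}$), whereas the paper bounds $\lambda_{\min}(\sum_{i\in\check I}\bx_i\bx_i^\top)$ for the random subset $\check I$ directly by invoking \cite[Lemma~D.2]{bellec2022observable}, i.e., a minimum over all deterministic subsets of the guaranteed cardinality $\ge p(\delta\q(1-\alpha))^{3/2}>p$, handled by a union bound against the exponentially small lower tail of the Wishart smallest eigenvalue \cite{edelman1988eigenvalues}. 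The subset union bound is cleaner because it sidesteps the continuity issue entirely; your net argument buys nothing extra here but is a viable alternative, and you correctly flag that the dependence of the constant on $\rho$ and $\mathsf{Law}(\eps_i)$ enters precisely through the truncation threshold and the resulting lower bound on $\rho''$.
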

\begin{proof}
    The proof of the first part of the lemma 
    and \eqref{eq:as_bound}
    is given in
    Appendix D.4 in \cite{bellec2022observable}.
    The moment bound \eqref{eq:moment_bound} is proved in 
    \cite[Appendix D.4]{bellec2022observable}
    under \Cref{assum:logi} when $y_i$ is binary valued.
    We now prove \eqref{eq:moment_bound} under \Cref{assum:robust}.
    Let also $\bV,\bA$ be the matrices
    defined in \Cref{lm:derivative_formula} for $\hbbeta$,
    and let
    $\tilde\bV,\tilde \bA$ be corresponding matrices
    defined in \Cref{lm:derivative_formula} for $\tbbeta$.

    By \eqref{eq:as_bound}, we have 
    $\|\hbbeta\|^2 \le \|(|I|^{-1}\sum_{i\in I}\bx_i\bx_i^T)^{-1}\|_{\oper}
    (K+2)$
    so that the bound on $\E[\|\hbbeta\|^m]$ follows by the known result
    $\E[\|(|I|^{-1} \sum_{i\in I}\bx_i\bx_i^T)^{-1}\|_{\oper}^m]
    \le C(\delta\q, m)$ which follows from the integrability
    of the density of the smallest eigenvalue of a Wishart matrix
    (\cite{edelman1988eigenvalues}), as explained for instance in
    \cite[Proposition A.1]{bellec2023debiasing}.

    Let $\alpha>0$ be a constant such that $1-\alpha > (\delta\q)^{-1}$
    and let $Q_\alpha\in\R$ be the quantile such that
    $\PP(|\eps_i|\le Q_\alpha)=1-\alpha/2$.
    Since $\q\delta>1$ and $|I|=(\delta\q) p$,
    by the weak law of large numbers applied to the indicator functions
    $\mathbb I\{|\eps_i|\le Q_\alpha\}$,
    with probability approaching one,
    there exists a random set
    $\hat I\subset I$ with $p(\delta\q)(1-\alpha)=|I|(1-\alpha)\le |\hat I|$
    and 
    $\sup_{i\in \hat I}|\eps_i|\le Q_\alpha$.
    Next, by \eqref{eq:as_bound}, there exists a constant $C(\delta,\q,\alpha,K)$ such that
    $|\hat I|^{-1}\sum_{i\in \hat I} 
    (\bx_i^T\hbbeta)^2 \le C(\delta,\q,\alpha, K)$.
    Now define
    $$\check I =\Bigl\{ i\in \hat I: (\bx_i^T\hbbeta)^2
    \le
    \frac{C(\delta,\q,\alpha,K)}{1-\sqrt{\delta\q(1-\alpha)}}
    \Bigr\}
    $$
    and note that by Markov's inequality,
    $
    |\hat I \setminus \check I| / |\hat I|
    \le (1-\sqrt{\delta\q(1-\alpha)})
    $.
    This gives
    $|\check I|\ge\sqrt{\delta\q(1-\alpha)}|\hat I|
    \ge p (\delta\q(1-\alpha))^{3/2}$ and 
    the constant $(\delta\q(1-\alpha))^{3/2}$ is strictly larger than 1.
    Finally, since for all $i\in \check I$ we have
    $|\eps_i|\le Q_\alpha$ and
    $(\bx_i^T\hbbeta)^2\le C(\delta,\q,\alpha,K)/(1-\sqrt{\delta\q(1-\alpha)}$,
    for all $i\in \check I$ we have $\eps_i-\bx_i^T\hbbeta \in [-L,L]$
    for some constant $L=L(\delta,\q,\alpha, K,Q_\alpha)$.
    Finally,
    \begin{align*}
        \|n \bA\|_{\oper}
        &\le 
        \frac{n}{|\check I|}
        \Bigl\|\Bigl(\frac{1}{|\check I|}\sum_{i\in \check I}\bx_i \rho''(y_i-\bx_i^T\hbbeta) \bx_i^T\Bigr)^{-1}\Bigr\|_{\oper} \le
        \frac{\delta
        \max_{u\in[-L,L]}(\rho''(u)^{-1})
        }{(\delta\q(1-\alpha))^{3/2}}
        \Bigl\|\Bigl(\frac{1}{|\check I|}\sum_{i\in \check I}\bx_i \bx_i^T\Bigr)^{-1}\Bigr\|_{\oper}.
    \end{align*}
    Since $\rho''$ is positive and continuous, the moment of order $m$
    of the previous display is bounded from above
    by some $C(m,\delta, K,q,Q_\alpha,\rho)$ thanks to
    the explicit formula of \cite{edelman1988eigenvalues}
    for the density of the smallest eigenvalue of a Wishart matrix,
    as explained in
    \cite[Lemma D.2]{bellec2022observable}.
\end{proof}

\newcommand\assumLemma{
    Under the assumptions and notation in \Cref{lem:SOS}, we have
}
\begin{lemma}\label{lem:SOS}
    Let either \Cref{assum:robust} or \Cref{assum:logi}
    be fulfilled with $I,\tilde I$ independent and uniformly distributed
    over all subsets of $[n]$ of size $\q n$.
    Let the notation of \Cref{sec:proof}
    be in force for $(\hbbeta,\bpsi,\bA,\bV)$
    (as in \Cref{lem:reg,lm:derivative_formula} for $I$)
    and similarly for
    $(\tbbeta,\tbpsi,\tilde\bA,\tilde\bV)$.
    Then, 
    \begin{equation*}
    \tr[\bV]\cdot \hat{\bbeta}^\top \bP \tilde{\bbeta} - \tr[\bP \tilde\bA] \cdot \bpsi^\top \tilde\bpsi = \Op(n^{1/2}).
    \end{equation*}
\end{lemma}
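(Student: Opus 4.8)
The plan is to start from the first-order optimality condition for the modified $I$-estimator. By \Cref{lem:reg,lm:derivative_formula}, the gradient $\bX^\top \nabla\mL(\bX\hbbeta)$ vanishes, so with $\bpsi = -\nabla\mL(\bX\hbbeta)$ we have $\bX^\top\bpsi = \bm 0_p$. Consequently the scalar
\[
T \coloneqq \bpsi^\top \bX \bP \tbbeta = (\bX^\top\bpsi)^\top \bP \tbbeta = 0
\]
vanishes identically. Writing $\bh \coloneqq \bpsi(\bP\tbbeta)^\top\in\R^{n\times p}$, so that $h_{ij}=\psi_i(\bP\tbbeta)_j$, we have $T=\langle \bX,\bh\rangle$; and because $\bP^2=\bP=\bP^\top$ forces $\bh\bP=\bh$, in fact $T=\langle \bX\bP,\bh\rangle$. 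This is the key observation: $\bh$ is supported on the range of $\bP$, so $T$ is a functional of the Gaussian matrix $\bX\bP$ alone, which after conditioning on $(I,\tilde I,\bX\bbeta^*,\by)$ is independent of the responses (in robust regression $\bP=\bI_p$ and one conditions on $(\eps_i)_{i\in[n]}$). This lets me integrate $T$ by parts against $\bX\bP$.

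Next I would apply the second-order Stein inequality (\Cref{lm:second_order_stein}, cf.\ the formula of \cite{bellec2021second} used in \Cref{lem:W}) to the field $\bh$ and the Gaussian $\bX\bP$. Since $T=\langle\bX\bP,\bh\rangle=0$, it bounds the squared divergence,
\[
\E\bigl[(\diver\bh)^2 \mid I,\tilde I\bigr] \le \E\Bigl[\|\bh\|_{\F}^2 + \sum_{k,l}\bigl\|\tfrac{\partial\bh}{\partial x_{kl}}\bigr\|_{\F}^2 \,\Big|\, I,\tilde I\Bigr],
\qquad \diver\bh \coloneqq \sum_{i,j}\frac{\partial h_{ij}}{\partial x_{ij}}.
\]
Computing $\diver\bh$ with the derivative formula \eqref{eq:derivative_formula} (differentiating $\psi_i$ with the $I$-matrices $\bD,\bA,\bV$ and $\tbbeta$ with the $\tilde I$-matrices $\tilde\bD,\tilde\bA$), the two diagonal pieces yield exactly the target quantities,
\[
\sum_{i,j}(\partial_{ij}\psi_i)(\bP\tbbeta)_j = -\tr[\bV]\,\hbbeta^\top\bP\tbbeta + R_1,
\qquad
\sum_{i,j}\psi_i\,\partial_{ij}(\bP\tbbeta)_j = \tr[\bP\tilde\bA]\,\bpsi^\top\tbpsi + R_2,
\]
with remainders $R_1=-\bpsi^\top\bD\bX\bA\bP\tbbeta$ and $R_2=-\tbbeta^\top\bP\tilde\bA\bX^\top\tilde\bD\bpsi$, so that $\diver\bh = -\tr[\bV]\hbbeta^\top\bP\tbbeta + \tr[\bP\tilde\bA]\bpsi^\top\tbpsi + R_1 + R_2$.

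It then remains to bound each piece at scale $O(\sqrt n)$. For the Stein error I would use \eqref{eq:as_bound} and \eqref{eq:moment_bound}: $\|\bh\|_{\F}^2=\|\bpsi\|^2\|\bP\tbbeta\|^2=O(n)$, and the Jacobian term splits as $\|\bP\tbbeta\|^2\sum_{kl}\|\partial_{kl}\bpsi\|^2 + \|\bpsi\|^2\sum_{kl}\|\partial_{kl}(\bP\tbbeta)\|^2$, controlled via $\|\bD\bX\bA\|_{\F}^2=O(1)$, $\|\bV\|_{\F}^2\le\|\bD\|_{\oper}\tr[\bV]=O(n)$, $\|\tilde\bA\|_{\F}^2=O(1/n)$ and their tilde analogues; this gives $\E[(\diver\bh)^2\mid I,\tilde I]=O(n)$, hence $\diver\bh=\Op(\sqrt n)$. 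For the remainders, the factorization $\bD\bX\bA=\bD^{1/2}\bM(\bM^\top\bM)^{-1}$ with $\bM=\bD^{1/2}\bX$ gives $\|\bD\bX\bA\|_{\oper}\le\|\bD\|_{\oper}^{1/2}\|\bA\|_{\oper}^{1/2}=O(n^{-1/2})$, so $|R_1|\le\|\bpsi\|\,\|\bD\bX\bA\|_{\oper}\,\|\bP\tbbeta\|=\Op(1)$ and likewise $|R_2|=\Op(1)$. Rearranging the displayed identity for $\diver\bh$ then yields $\tr[\bV]\hbbeta^\top\bP\tbbeta-\tr[\bP\tilde\bA]\bpsi^\top\tbpsi=\Op(\sqrt n)$.

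I expect the main obstacle to be the Stein-error estimate, i.e.\ proving $\E[\sum_{kl}\|\partial_{kl}\bh\|_{\F}^2\mid I,\tilde I]=O(n)$. This couples the two estimators through $\bpsi$ (from $I$) and $\tbbeta$ (from $\tilde I$) and requires the operator- and Frobenius-norm controls on $\bV,\bA,\bD\bX\bA$ to be promoted to genuine moment bounds via \eqref{eq:moment_bound}; notably, $\|\bV\|_{\F}^2$ is only $O(n)$ rather than $O(1)$, and this is precisely what pins the final rate at $\sqrt n$. A secondary technical point is justifying the integration by parts only in the range of $\bP$ in the logistic case, which is legitimate exactly because $\bX\bP$ is independent of $(\by,U)$ and $\bh=\bh\bP$.
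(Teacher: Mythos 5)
Your proposal is correct and follows essentially the same route as the paper: apply the second-order Stein inequality to the bilinear form $\bpsi^\top\bX\bP\tbbeta$ (which vanishes by the KKT condition $\bX^\top\bpsi=\bm 0_p$), compute the divergence via the derivative formulas of \Cref{lm:derivative_formula} to extract $-\tr[\bV]\hbbeta^\top\bP\tbbeta+\tr[\bP\tilde\bA]\bpsi^\top\tbpsi$ plus two $\Op(1)$ remainders, and control the Stein error at scale $O(n)$ using \eqref{eq:as_bound}--\eqref{eq:moment_bound}. Your explicit handling of the conditioning in the logistic case (integrating by parts only against $\bX\bP$, legitimate since $\bh\bP=\bh$) is a point the paper makes explicit only in the proof of \eqref{eq:relation_bpsi_P_beta}, but it is the same argument.
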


\begin{proof}
    We will apply \Cref{lm:second_order_stein} below with $\bm{\rho}={\bpsi}$ and $\bm{\eta}=\bP\tilde{\bbeta}$. 
    \begin{lemma}[Proposition 2.5 in \cite{bellec2020out}]\label{lm:second_order_stein}
    Let $\bX = (x_{ij})\in\R^{n\times p}$ with iid $N(0,1)$ entries and $\bm{\rho}:\R^{n\times p}\to\R^n$, $\bm{\eta}:\R^{n\times p}\to\R^p$ be two vector functions, with weakly differentiable components $\rho_1, \dots, \rho_n$ and $\eta_1, \dots, \eta_p$. Then
    \begin{multline*}
    \E\Bigl[
        \Bigl(\bm{\rho}^\top \bm{X} \bm{\eta} - \sum_{i=1}^n\sum_{j=1}^p \frac{\partial (\rho_i\eta_j)}{\partial x_{ij}}
        \Bigr)^2
    \Bigr] 
    \le \E\bigl[\|\bm{\rho}\|^2 \|\bm{\eta}\|^2\bigr] + 2\E\Bigl[\sum_{i=1}^n\sum_{j=1}^p \|\bm{\eta}\|^2\|\frac{\partial \bm{\rho}}{\partial x_{ij}}\|^2 + \|\bm{\rho}\|^2\|\frac{\partial \bm{\eta}}{\partial x_{ij}}\|^2 \Bigr].
    \end{multline*}
\end{lemma}
    Using the derivative formula \eqref{eq:derivative_formula} and upper bounds \eqref{eq:as_bound} in \Cref{lm:derivative_formula}, it holds that 
    \begin{align*}
        \sum_{i=1}^n\sum_{j=1}^p \|\frac{\partial {\bpsi}}{\partial x_{ij}}\|^2
        &\le 2 \| \bD \bX\bA\|_{\F}^2 \|{\bpsi}\|^2 + 2 \|\bV\|_{\F}^2 \|\hat{\bbeta}\|^2
        \le \C (n^2 \|\bX\|_{\oper}^2 \|\bA\|_{\oper}^2 + n \|\hat{\bbeta}\|^2), \\
        \sum_{i=1}^n\sum_{j=1}^p  \|\frac{\partial \bP \tilde{\bbeta}}{\partial x_{ij} }\|^2 
        &\le 2 \|\bP \tilde \bA\|_{\F}^2 \|\tilde\bpsi\|^2 + 2 \|\bP\tilde\bA\bX^\top  \tilde \bD \|_{\F}^2 \|\hat{\bbeta}\|^2
        \le \C(pn\|\tilde\bA\|_{\oper}^2 + n \|\bX\|_{\oper}^2 \|\tilde\bA\|_{\oper}^2 \|\tilde{\bbeta}\|^2).
    \end{align*}
    Since
    $\E[\|\bX {n^{-1/2}}\|_{\oper}^k] \vee \E[\|{n}\bA\|_{\oper}^2] \vee \E[\|\hat{\bbeta}\|^k]\le C$
    for a constant independent of $n,p$
    by the moment bounds \eqref{eq:moment_bound}
    and integration of $\PP(\|\bX {n^{-1/2}}\|_{\oper}>1+\delta^{-1/2}+tn^{-1/2})\le e^{-t^2/2}$ (see, e.g.,
    \cite[Theorem II.13]{DavidsonS01},
    \cite[Theorem 7.3.1]{vershynin2018high}
    or
    \cite[Theorem 5.5]{boucheron2013concentration}),
    we obtain
    since $\bpsi=^d\tbpsi$ and $\bbeta=^d\tbbeta$,
    \begin{equation}
        \sum_{i=1}^n\sum_{j=1}^p 
        \E\Bigl[
        \|\frac{\partial \bP {\hbbeta}}{\partial x_{ij} }\|^2 
        +
        \|\frac{\partial \bP \tilde{\bbeta}}{\partial x_{ij} }\|^2 
        +
        \frac1n \|\frac{\partial {\bpsi}}{\partial x_{ij}}\|^2  
        +
        \frac1n \|\frac{\partial {\bpsi}}{\partial x_{ij}}\|^2 
        \Bigr]
        \le C'
        \label{bound_frobenius_norm_derivatives}
    \end{equation}
    for another constant independent of $n,p$.
    Thus the RHS of \Cref{lm:second_order_stein} is $O(n)$. This gives 
     $$
     \E\Bigl[\Bigl({\bpsi}^\top \bX \bP\tilde{\bbeta} - \sum_{i=1}^n\sum_{j=1}^p \frac{\partial }{\partial x_{ij}} ({\be_i^\top \bpsi\cdot\be_j^\top \bP\tilde{\bbeta}})\Bigr)^2 \Bigr] \le n \C.
     $$
  Using the formula \eqref{eq:derivative_formula} again, 
     \begin{align*}
        \sum_{i=1}^n\sum_{j=1}^p \frac{\partial }{\partial x_{ij}} ({\be_i^\top \bpsi \be_j^\top \bP\tilde{\bbeta}}) &= \sum_{ij} \be_i^\top  \bigl(\frac{\partial \bpsi}{\partial x_{ij}}\bigr)  \be_j^\top \bP\tilde{\bbeta} + \be_i^\top \bpsi \be_j^\top\bP \bigl(\frac{\partial \tilde{\bbeta}}{\partial x_{ij}}\bigr)\\
        &= -\bpsi^\top \bD \bX\bA \bP\tilde{\bbeta}-\tr[\bV]\hat{\bbeta}^\top \bP\tilde{\bbeta}
        + \tr[{\tilde{\bA}}]\bpsi^\top \tilde{\bpsi} - \tilde{\bbeta}^\top \bP {\tilde{\bA}} \bX^\top \tilde{\bD}\bpsi. 
     \end{align*}
    Using the almost sure bounds \eqref{eq:as_bound} and the moment bounds \eqref{eq:moment_bound}, 
     \begin{align*}
        \E[|\bpsi^\top \bD \bX\bA \bP\tilde{\bbeta}|^2] &\le \E[\|\bpsi\|^2\|\tilde{\bbeta}\|^2 \|\bD \bX\bA\bP\|_{\oper}^2] \le \C \E[n \|\tilde{\bbeta}\|^2 \|\bX\bA\|_{\oper}^2] = O(1)\\
        \E[| \tilde{\bbeta}^\top \bP {\tilde{\bA}} \bX^\top \tilde{\bD}\bpsi|^2] &\le \E[\|\bpsi\|^2\|\tilde{\bbeta}\|^2 \|\bP {\tilde{\bA}} \bX^\top \tilde{\bD}\|_{\oper}^2] \le \E[n \|\tilde{\bbeta}\|^2 \|{\tilde{\bA}} \bX^\top\|_{\oper}^2] = O(1).
     \end{align*}
     This gives
     $$
     \E\Bigl[\Bigl({\bpsi}^\top \bX \bP\tilde{\bbeta} + \tr[\bV]\hat{\bbeta}^\top \bP\tilde{\bbeta} - \tr[\bP {\tilde{\bA}}]\bpsi^\top \tilde{\bpsi}\Bigr)^2\Bigr] = O(n) + O(1).
     $$
     Here, ${\bpsi}^\top\bX \bP\tilde{\bbeta} $ is $0$ by the KTT condition $\bX^\top \bpsi = \bm{0}_p$, and the proof is complete. 
\end{proof}

\begin{lemma}
    \assumLemma
\begin{equation}
    \label{eq:relation_bpsi_P_beta}
\|\bpsi\|^2 - p^{-1} \tr[\bV]^2 \|\bP\hat\bbeta\|^2
= \Op(n^{1/2}). 
\end{equation}
\end{lemma}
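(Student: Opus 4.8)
The plan is to specialize the second--order Stein identity behind \Cref{lem:SOS} to a single estimator, applying \Cref{lm:second_order_stein} with $\bm{\rho}=\bpsi$ and $\bm{\eta}=\bP\hat{\bbeta}$. The bilinear term vanishes by the stationarity (KKT) condition $\bX^\top\bpsi=\bm{0}_p$, since then $\bpsi^\top\bX\bP\hat{\bbeta}=0$; hence \Cref{lm:second_order_stein} controls the divergence term alone in $L^2$. Exactly as in the proof of \Cref{lem:SOS}, the derivative bounds \eqref{bound_frobenius_norm_derivatives}, the almost-sure bounds \eqref{eq:as_bound}, and the moment bounds \eqref{eq:moment_bound} make the right-hand side of \Cref{lm:second_order_stein} of order $n$, so the divergence is $\Op(n^{1/2})$.

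First I would evaluate the divergence using the derivative formulas \eqref{eq:derivative_formula}. From $\partial\psi_i/\partial x_{ij}=-(\bD\bX\bA)_{ij}\psi_i-V_{ii}\hat\beta_j$ and $\partial(\be_j^\top\bP\hat{\bbeta})/\partial x_{ij}=\psi_i\,\be_j^\top\bP\bA\be_j-\hat\beta_j\,\be_j^\top\bP\bA\bX^\top\bD\be_i$, the diagonal sums collapse to the traces $\tr[\bV]$ and $\tr[\bP\bA]$, and after identifying the two equal cross terms (using $\bA=\bA^\top$, $\bD=\bD^\top$, $\bP=\bP^\top$) one finds
\[
\sum_{i=1}^{n}\sum_{j=1}^{p}\frac{\partial(\psi_i\,\be_j^\top\bP\hat{\bbeta})}{\partial x_{ij}}
=\tr[\bP\bA]\,\|\bpsi\|^2-\tr[\bV]\,\|\bP\hat{\bbeta}\|^2-2\,\bpsi^\top\bD\bX\bA\bP\hat{\bbeta}.
\]
The cross term is negligible: by \eqref{eq:as_bound} and \eqref{eq:moment_bound} we have $\|\bpsi\|=\Op(n^{1/2})$, $\|\bD\|_{\oper}=\Op(1)$, $\|\bX\bA\|_{\oper}\le\|\bX\|_{\oper}\|\bA\|_{\oper}=\Op(n^{1/2})\,\Op(n^{-1})=\Op(n^{-1/2})$ and $\|\bP\hat{\bbeta}\|=\Op(1)$, so $|\bpsi^\top\bD\bX\bA\bP\hat{\bbeta}|=\Op(1)$. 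Combined with the $\Op(n^{1/2})$ bound on the divergence, this yields
\[
\tr[\bP\bA]\,\|\bpsi\|^2-\tr[\bV]\,\|\bP\hat{\bbeta}\|^2=\Op(n^{1/2}).
\]

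It then remains to convert the last display into the stated identity. Since $\tr[\bP\bA]\to^P\gamma>0$ by \Cref{lm:relation_trV_trA_gamma}, it is bounded away from $0$ with probability tending to one, so dividing by $\tr[\bP\bA]$ gives $\|\bpsi\|^2-(\tr[\bV]/\tr[\bP\bA])\,\|\bP\hat{\bbeta}\|^2=\Op(n^{1/2})$. Writing $\tr[\bV]/\tr[\bP\bA]-p^{-1}\tr[\bV]^2=\tr[\bV]\,(p-\tr[\bP\bA]\tr[\bV])/(p\,\tr[\bP\bA])$ and using that $\tr[\bV]/(p\,\tr[\bP\bA])=\Op(1)$ while \Cref{lm:relation_trV_trA_gamma} supplies the rate $p-\tr[\bP\bA]\tr[\bV]=\Op(n^{1/2})$, this difference is $\Op(n^{1/2})$; multiplying by $\|\bP\hat{\bbeta}\|^2=\Op(1)$ preserves the order and gives $\|\bpsi\|^2-p^{-1}\tr[\bV]^2\|\bP\hat{\bbeta}\|^2=\Op(n^{1/2})$.

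The main obstacle is this final conversion rather than the Stein estimate. Because $\|\bpsi\|^2$, $\tr[\bV]$ and $\|\bP\hat{\bbeta}\|^2$ are each of exact order $n$, the claim is a cancellation at relative precision $\Op(n^{-1/2})$, so convergence in probability of the traces does not suffice: one genuinely needs the rate $\tr[\bP\bA]\tr[\bV]=p+\Op(n^{1/2})$, equivalently $\tr[\bP\bA]=\gamma+\Op(n^{-1/2})$ together with $\tr[\bV]=p/\gamma+\Op(n^{1/2})$. Verifying that \Cref{lm:relation_trV_trA_gamma} delivers these rates, and that the $\Op(n^{-1/2})$-scale trace errors, once multiplied by the order-$n$ factors $\|\bpsi\|^2$ and $\tr[\bV]$, still land at $\Op(n^{1/2})$, is where the bookkeeping must be done carefully.
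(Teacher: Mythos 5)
Your Stein-formula step is fine as far as it goes: applying \Cref{lm:second_order_stein} with $\bm{\rho}=\bpsi$, $\bm{\eta}=\bP\hbbeta$, using $\bX^\top\bpsi=\bm 0_p$, and evaluating the divergence via \eqref{eq:derivative_formula} correctly yields $\tr[\bP\bA]\,\|\bpsi\|^2-\tr[\bV]\,\|\bP\hat{\bbeta}\|^2=\Op(n^{1/2})$ (this is just the diagonal case $\tilde I=I$ of \Cref{lem:SOS}, which the paper also records as the first display in the proof of \Cref{lm:relation_trV_trA_gamma}). The problem is the conversion step, and you have correctly located the crux but not resolved it: to pass from $\tr[\bP\bA]\|\bpsi\|^2-\tr[\bV]\|\bP\hbbeta\|^2=\Op(n^{1/2})$ to $\|\bpsi\|^2-p^{-1}\tr[\bV]^2\|\bP\hbbeta\|^2=\Op(n^{1/2})$ you invoke the rate $p-\tr[\bP\bA]\tr[\bV]=\Op(n^{1/2})$ from \Cref{lm:relation_trV_trA_gamma}. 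But the paper's proof of \Cref{lm:relation_trV_trA_gamma} obtains exactly that rate by \emph{combining} the diagonal case of \Cref{lem:SOS} with the present lemma \eqref{eq:relation_bpsi_P_beta}; it is the substitution of the second display into the first that produces $1-p^{-1}\tr[\bP\bA]\tr[\bV]=\Op(n^{-1/2})$. So your argument is circular: the only source in the paper for the trace-product rate is the statement you are trying to prove. Mere convergence in probability of the traces does not suffice here, precisely for the reason you note — the cancellation is at relative precision $n^{-1/2}$ against factors of order $n$.

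The paper escapes this by using a different tool: the chi-square-type identity of \Cref{lm:chi_square}, applied to the normalized $\bpsi/(\sqrt{n\q}(1+\sqrt{K+2}))$. That lemma compares $p\|\bpsi\|^2$ with $\sum_{j}\bigl(\bpsi^\top\bX\be_j-\sum_i\partial\psi_i/\partial x_{ij}\bigr)^2$; after the KKT condition kills $\bpsi^\top\bX$, the remaining term is the \emph{squared norm} of the divergence vector $\bP\bA^\top\bX^\top\bD\bpsi+\tr[\bV]\bP\hbbeta$, whose dominant piece is $\tr[\bV]^2\|\bP\hbbeta\|^2$ directly — no $\tr[\bP\bA]$ ever enters, so no trace-product rate is needed. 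To repair your proof you would either have to switch to this quadratic identity, or supply an independent derivation of $\tr[\bP\bA]\tr[\bV]=p+\Op(n^{1/2})$, which is not available elsewhere in the paper. (A secondary, minor point: in the logistic case the Stein/chi-square formulas must be applied to the columns $j\ge 2$ of $\bX$ conditionally on $(\by,\bX\bbeta^*)$, as the paper does, since the first column is not independent of the responses; summing over all $j\in[p]$ as written glosses over this.)
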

\begin{proof}
    We will use \Cref{lm:chi_square} below  with $\bm{\rho}=\bpsi/(\sqrt{n\q} (1+\sqrt{K+2}))$. 
    \begin{lemma}[Theorem 2.6 in \cite{bellec2020out}]\label{lm:chi_square}
    Assume that $\bX=(x_{ij})\in\R^{n\times p}$ has iid $N(0,1)$ entries, that $\bm{\rho}:\R^{n\times p}\to \R^n$ is weakly differentiable and that $\|\bm{\rho}\|^2 \le 1$ almost everywhere. Then
    \begin{align*}
      \E\Bigl|p\|\bm{\rho}\|^2 
      -\sum_{j=1}^p \Bigl(
          \bm{\rho}^\top \bX \be_j - \sum_{i=1}^n \frac{\partial \rho_i}{\partial x_{ij}}
      \Bigr)^2
      \Bigr|  \le C \E\Bigl[1+\sum_{i=1}^n\sum_{j=1}^p \|\frac{\partial \bm{\rho}}{\partial x_{ij}}\|^2\Bigr]^{1/2} \sqrt{p} + C \E\Bigl[\sum_{i=1}^n\sum_{j=1}^p \|\frac{\partial \bm{\rho}}{\partial x_{ij}}\|^2\Bigr],
    \end{align*}
    where $C>0$ is an absolute constant. 
\end{lemma}
    
    Note $\|\bpsi\|^2 \le n\q (1+\sqrt{K+2})^2$ with probability $1$ from the almost sure bound \eqref{eq:as_bound} in \Cref{lm:derivative_formula}, so the assumption in \Cref{lm:chi_square} is satisfied.
    In logistic regression, we can assume by rotational invariance
    that $\bbeta^*/\|\bbeta^*\|=\be_1$ (first canonical basis vector),
    and we apply \Cref{lm:chi_square} conditionally on $(\by,\bX\bbeta^*)$
    to the Gaussian matrix $(x_{ij})_{i\in[n], j\ge 2}$.
    In robust regression, we apply \Cref{lm:chi_square}
    with respect to the full Gaussian
    matrix $\bX = (x_{ij})_{i\in[n], j\ge 2}$, conditionally
    on the independent noise $(\eps_i)_{i\in [n]}$.
    To accommodate both settings simultaneously, let us define $j_0=1$ in robust regression, or $j_0=2$ in logistic regression, 
    so that 
    $\bP=\sum_{j=j_0}^p \be_j\be_j^T$ holds. Since $\sum_{i=1}^n\sum_{j=j_0}^p \| (\partial/\partial x_{ij}) \bpsi\|^2$ is upper bounded by $n C'$
    from \eqref{bound_frobenius_norm_derivatives}, the RHS of the inequality in \Cref{lm:chi_square}
    is $O(\sqrt{n})$. Therefore, \Cref{lm:chi_square} gives 
    $$
(p+1-j_0) \frac{\|\bpsi\|^2}{n}  - \frac{1}{n}\sum_{j=j_0}^p \Bigl(\bpsi^\top \bX \be_j - \sum_{i=1}^n \frac{\partial \be_i^\top \bpsi}{\partial x_{ij}}\Bigr)^2 = \Op(\sqrt{n}). 
    $$
    Here, $(p+1-j_0) \|\bpsi\|^2/n = p\|\bpsi\|^2/n+ \Op(1)$ by $\|\bpsi\|^2 = \Op(n)$, while 
    $\bpsi^\top \bX = \bm{0}_p^\top$ by the KTT condition. It remains to compute $\sum_{i=1}^n (\partial/\partial x_{ij})\be_i^\top \bpsi$. Using the derivative formula \eqref{eq:derivative_formula} and upper bounds \eqref{eq:as_bound}-\eqref{eq:moment_bound}, 
    \begin{align*}
        \sum_{j=j_0}^p \Bigl(\sum_{i=1}^n \frac{\partial \be_i^\top \bpsi}{\partial x_{ij}}\Bigr)^2 
        &= \|\bP \bA^\top \bX^\top \bD \bpsi + \tr[\bV]\bP \hat{\bbeta}\|^2 \\
        &= \tr[\bV]^2 \|\bP\hat\bbeta\|^2 + \|\bP\bA^\top\bX \bD \bpsi\|^2 + 2\tr[\bV] \hat{\bbeta}^\top \bP\bA^\top\bX^\top \bD \bpsi \\
        &= \tr[\bV]^2 \|\bP\hat{\bbeta}\|^2 + \Op(1) + \Op(n),
    \end{align*}
    which completes the proof. 
\end{proof}

\begin{lemma}\label{lm:relation_trV_trA_gamma}
    We have $ \tr[\bV] \tr[\bP\bA] = p + O(n^{1/2})$ and $ \tr[\bP\bA] \to^p \gamma$. 
\end{lemma}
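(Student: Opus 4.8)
The plan is to combine two trace identities already available from the preceding lemmas. First, specializing \Cref{lem:SOS} to $\tilde I=I$ (its proof via \Cref{lm:second_order_stein} applies verbatim with $\bm{\rho}=\bpsi$ and $\bm\eta=\bP\hbbeta$, as the independence of $I,\tilde I$ plays no role in the Stein computation), and using that $\bP$ is an orthogonal projection so $\hbbeta^\top\bP\hbbeta=\|\bP\hbbeta\|^2$, I obtain
\begin{equation*}
\tr[\bV]\,\|\bP\hbbeta\|^2 - \tr[\bP\bA]\,\|\bpsi\|^2 = \Op(n^{1/2}). \qquad (\star)
\end{equation*}
Second, \eqref{eq:relation_bpsi_P_beta} gives $\|\bpsi\|^2 = p^{-1}\tr[\bV]^2\|\bP\hbbeta\|^2 + \Op(n^{1/2})$. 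Beyond these, I will use three facts established earlier: $\|\bP\hbbeta\|^2\to^P\sigma^2>0$ and $p^{-1}\|\bpsi\|^2\to^P\sigma^2/\gamma^2>0$ (from \eqref{eq_convergence} in robust regression, or \eqref{limit-sigma_2} and \eqref{limit-sigma2_by_q_gamma_2_delta} in logistic regression, recalling $|I|=q\delta p$ and $\psi_i=-{\ell}_{y_i}'(\bx_i^\top\hbb(I))$ on the high-probability event), and $0\le\tr[\bP\bA]=\Op(1)$, which holds because $\tr[\bP\bA]=\tr[\bP\bA\bP]\le p\|\bA\|_{\oper}$ together with the moment bound $\E[\|n\bA\|_{\oper}^m]\le C$ in \eqref{eq:moment_bound}.

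For the product identity I substitute the expression for $\|\bpsi\|^2$ from \eqref{eq:relation_bpsi_P_beta} into $(\star)$ and factor, obtaining
\begin{equation*}
\tr[\bV]\,\|\bP\hbbeta\|^2\Bigl(1-p^{-1}\tr[\bP\bA]\,\tr[\bV]\Bigr) = \bigl(1+\tr[\bP\bA]\bigr)\Op(n^{1/2}).
\end{equation*}
The right-hand side is $\Op(n^{1/2})$ since $\tr[\bP\bA]=\Op(1)$. On the left, \eqref{eq:relation_bpsi_P_beta} combined with $p^{-1}\|\bpsi\|^2\to^P\sigma^2/\gamma^2>0$ forces $\tr[\bV]\gtrsim p$ with probability approaching one, while $\|\bP\hbbeta\|^2\to^P\sigma^2>0$, so the prefactor $\tr[\bV]\|\bP\hbbeta\|^2$ is of order exactly $p\asymp n$ and bounded below. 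Dividing through then yields $1-p^{-1}\tr[\bP\bA]\tr[\bV]=\Op(n^{-1/2})$, i.e. $\tr[\bV]\tr[\bP\bA]=p+\Op(n^{1/2})$, which is the first claim.

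For the limit of $\tr[\bP\bA]$, I first extract the limit of $\tr[\bV]$: dividing \eqref{eq:relation_bpsi_P_beta} by $p^2$ gives $p^{-2}\tr[\bV]^2\|\bP\hbbeta\|^2 = p^{-1}\|\bpsi\|^2+\Op(n^{-1/2})\to^P\sigma^2/\gamma^2$, and combined with $\|\bP\hbbeta\|^2\to^P\sigma^2$ and $\tr[\bV]\ge0$ this yields $p^{-1}\tr[\bV]\to^P 1/\gamma$. Plugging this into the product identity, $\tr[\bP\bA]=(p+\Op(n^{1/2}))/\tr[\bV]=(1+\Op(n^{-1/2}))/(p^{-1}\tr[\bV])\to^P\gamma$, completing the proof.

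I expect the main obstacle to be bookkeeping rather than conceptual, concentrated in the error propagation when dividing by $\tr[\bV]\|\bP\hbbeta\|^2$. The division is legitimate only because the lower bound $\tr[\bV]\gtrsim p$ (which itself rests on $p^{-1}\|\bpsi\|^2$ having a strictly positive limit, hence on $\sigma,\gamma>0$) and the upper bound $\tr[\bP\bA]=\Op(1)$ hold simultaneously with high probability; tracking these order-of-magnitude bounds so that the $\Op(n^{1/2})$ remainders stay negligible after division by a quantity of order $n$ is the only delicate point. A secondary point to verify carefully is the single-subset specialization $(\star)$ of \Cref{lem:SOS}, but since the derivative bounds \eqref{bound_frobenius_norm_derivatives} and moment bounds \eqref{eq:moment_bound} apply to each estimator separately, the argument transfers without change.
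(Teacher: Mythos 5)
Your proposal is correct and follows essentially the same route as the paper: specialize \Cref{lem:SOS} to $\tilde I=I$, combine with \eqref{eq:relation_bpsi_P_beta} to first extract $p^{-1}\tr[\bV]\to^P 1/\gamma$, then factor the identity as $\tr[\bV]\|\bP\hbbeta\|^2(1-p^{-1}\tr[\bP\bA]\tr[\bV])=\Op(n^{1/2})$ and divide. The only difference is that you make explicit two points the paper leaves implicit (the $\Op(1)$ bound on $\tr[\bP\bA]$ controlling the substituted remainder, and the high-probability lower bound on $\tr[\bV]$ justifying the division), which is a welcome but minor refinement.
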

\begin{proof}
    By the lemma above, we have
    $$
    \tr[\bV] \|\bP\hat{\bbeta}\|^2 - \tr[\bP\bA] \|\bpsi\|^2 = \Op(n^{1/2}), \qquad \|\bpsi\|^2 - p^{-1}\tr[\bV]^2\|\bP\hat{\bbeta}\|^2 = \Op(n^{1/2}). 
    $$
    Here, since $\|\bP\hat{\bbeta}\|^2 \to^p \sigma^2>0$ and $\|\bpsi\|^2/n\q \to^p \sigma^2/(\q\delta\gamma^2)$, the second display gives
    $
    \tr[\bV]/(\q n)\to^P 1/(\q\delta \gamma)
    $. On the other hand, substituting the second display to the first display, we are left with
    $$
    \tr[\bV]\|\bP\hat{\bbeta}\|^2 (1 - p^{-1}\tr[\bP\bA] \tr[\bV])  = \Op(n^{1/2}). 
    $$
    Since $\tr[\bV]\|\bP\hat{\bbeta}\|^2/n \to^P \sigma^2/(\delta\gamma^2) \cdot \sigma^2>0$, this gives $1 - p^{-1}\tr[\bP\bA] \tr[\bV]= \Op(n^{-1/2})$. Combined with $\tr[\bV]/(\q n)\to^p 1/(\q \delta \gamma)$, we have $\tr[\bP\bA] \to^p \gamma$. 
\end{proof}

\begin{lemma}\label{lm:sample_without_rep_psi}
    \assumLemma
    $$
    \frac{1}{|I\cap \tilde I|} \sum_{i\in I\cap \tilde{I}} \psi_i^2 = \frac{1}{|I|}\sum_{i\in I} \psi_i^2 + \Op(n^{-1/2}).  
    $$
\end{lemma}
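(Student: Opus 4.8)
The plan is to condition on everything except the randomness of $\tilde I$ and exploit that, given $I$, the overlap $I\cap\tilde I$ is a uniform subsample of $I$ drawn without replacement. First I would pass to the high-probability event $\{\hbb(I)=\hbbeta(I)\}$ of \Cref{lem:reg}, on which $\psi_i=-{\ell}_{y_i}'(\bx_i^\top\hbb(I))$ and therefore $|\psi_i|\le 1$ for every $i$, using $|\rho'|\le 1$ under \Cref{assum:robust} and $|{\ell}_y'|\le 1$ for the logistic loss. Since $(\psi_i)_{i\in I}$ is a measurable function of $(\bX,\by,I)$ only, and $\tilde I$ is independent of these, I would condition on $(\bX,\by,I)$: the values $\psi_i^2\in[0,1]$ then become deterministic and the sole remaining randomness is the draw of $\tilde I$.

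Introduce the indicators $B_i=\mathbbm{1}\{i\in\tilde I\}$ for $i\in I$, the overlap size $k=|I\cap\tilde I|=\sum_{i\in I}B_i$, and the centered values $c_i=\psi_i^2-\bar\psi^2$ with $\bar\psi^2=|I|^{-1}\sum_{i\in I}\psi_i^2$, so that $\sum_{i\in I}c_i=0$. Writing the target difference as
\[
\frac{1}{k}\sum_{i\in I\cap\tilde I}\psi_i^2-\bar\psi^2=\frac{1}{k}\sum_{i\in I}B_ic_i
\]
reduces the claim to controlling the centered linear statistic $\sum_{i\in I}B_ic_i$. Because $\tilde I$ is uniform over subsets of $[n]$ of size $\q n$, the $B_i$ are sampling-without-replacement indicators with $\E[B_i]=\q$, $\mathrm{Var}(B_i)=\q(1-\q)$ and $\mathrm{Cov}(B_i,B_j)=-\q(1-\q)/(n-1)$ for $i\ne j$. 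Using $\sum_{i\in I}c_i=0$, which forces $\sum_{i\ne j}c_ic_j=-\sum_ic_i^2$, the conditional variance collapses to
\[
\mathrm{Var}\Bigl(\sum_{i\in I}B_ic_i\,\Big|\,\bX,\by,I\Bigr)=\q(1-\q)\frac{n}{n-1}\sum_{i\in I}c_i^2\le Cn,
\]
where the final bound uses $\sum_{i\in I}c_i^2\le\sum_{i\in I}\psi_i^4\le|I|\le\q n$ on the event $|\psi_i|\le 1$.

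Chebyshev's inequality applied conditionally then yields $\sum_{i\in I}B_ic_i=\Op(\sqrt n)$ uniformly over the conditioning, hence unconditionally after integrating out $(\bX,\by,I)$. Finally, \Cref{rm:gepmetric_dist} and the variance formula for the hyper-geometric distribution give $k=|I\cap\tilde I|=\q^2 n+\Op(\sqrt n)$, so that $k\ge\tfrac12\q^2 n$ with probability approaching one and $k^{-1}=\Op(n^{-1})$. Combining the two bounds,
\[
\frac{1}{|I\cap\tilde I|}\sum_{i\in I\cap\tilde I}\psi_i^2-\frac{1}{|I|}\sum_{i\in I}\psi_i^2=\frac{\Op(\sqrt n)}{\q^2 n\,(1+\op(1))}=\Op(n^{-1/2}),
\]
which is the assertion. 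The one delicate point I expect is the control of $\sum_{i\in I}c_i^2$: for the \emph{modified} estimator the individual $\psi_i$ need not be bounded, so it is essential to pass first to the high-probability event $\{\hbb(I)=\hbbeta(I)\}$ on which $|\psi_i|\le 1$; since the conclusion is an in-probability ($\Op$) statement, restricting to this event of probability tending to one is harmless.
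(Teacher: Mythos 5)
Your proof is correct and follows essentially the same route as the paper: condition so that $(\psi_i)_{i\in I}$ is deterministic, bound the variance of the sample mean under sampling without replacement, apply Chebyshev, and use $|I\cap\tilde I|=\q^2n+\Op(\sqrt n)$; the only cosmetic difference is that you compute the without-replacement variance by hand via indicator covariances while the paper cites a textbook lemma and conditions on $(|I\cap\tilde I|,I,\bpsi)$ instead. Your extra care in passing to the event $\{\hbb(I)=\hbbeta(I)\}$ so that $|\psi_i|\le 1$ (needed to control $\sum_i\psi_i^4$ rather than just $\sum_i\psi_i^2$) is a legitimate refinement of the step the paper treats more briefly.
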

\begin{proof}
    Let us use the following simple random sampling properties. 
\begin{lemma}[e.g., page 13 of \cite{chaudhuri2014modern}]\label{lm:sample_without_rep}
    Consider a deterministic array $(x_i)_{i=1}^M$ of length $M\ge 1$ and let $\mu$ be the mean $M^{-1}\sum_{i\in [M]} x_i$. 
    Suppose $J$ is uniformly distributed on  $\{J \subset [M]: |J|=m\}$ for a fixed integer $m\le M$. Then, the sample mean $\hat{\mu}_J =|J|^{-1}\sum_{i\in J} x_i$ is an unbiased estimate of the true mean $\mu$ and the variance is bounded as $\E[(\hat{\mu}_J-\mu)^2] \le  {\sum_{i\in M} x_i^2}/(m M)$. 
    \end{lemma}
    Recalling \Cref{rm:gepmetric_dist}, using \Cref{lm:sample_without_rep} with $m=|I\cap \tilde I|$ and $M=|I|$ conditionally on $(|\tilde I \cap I|, I, \bpsi)$ with $\sum_{i\in I} \psi_i^2 \le |I| C$ from \eqref{eq:as_bound} for a constant $C$, 
    $$
    \E\Bigl[\bigl| \frac{1}{|I\cap \tilde{I}|}\sum_{i\in I\cap \tilde{I}} \psi_i^2- \frac{1}{|I|} \sum_{i\in I}\psi_i^2 \bigr|^2\Bigr] \le \E\Bigl[\frac{\sum_{i\in I} \psi_{i}^2}{|I||I\cap \tilde I|}\Bigr] \le \frac{C}{|I\cap \tilde I|}. 
    $$
    Combined with the concentration $|I\cap \tilde{I}| = nq^2 + \op(n)$ (\Cref{rm:gepmetric_dist}), we complete the proof. 
\end{proof}

\begin{lemma}
    \label{eq:application_Gaussian_Poincare}
    Let $\bar{\E}[\cdot]=\E[\cdot|\bX\bbeta^*,\by]$ be the conditional expectation given $(\bX\bbeta^*, \by)$. \assumLemma 
    \begin{align*}
        \tilde{\bbeta}^\top\bP\hat{\bbeta}  = \bar{\E}\bigl[\tilde{\bbeta}^\top\bP\hat{\bbeta}\bigr] + \Op(n^{-1/2}), \quad 
        \frac{1}{|I\cap \tilde I|} \bpsi^\top \tilde{\bpsi}= \frac{1}{|I\cap \tilde I|} \bar{\E}\Bigl[
            \bpsi^\top \tilde{\bpsi}
        \Bigr] + \Op(n^{-1/2}).
    \end{align*}
\end{lemma}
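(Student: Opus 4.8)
The plan is to prove both displays by a conditional Gaussian Poincar\'e argument. Throughout, as in \Cref{sec:proof}, I let $\bar\E$ denote the conditional expectation given $(I,\tilde I,\bX\bbeta^*,\by)$; since the subsampling indices $I,\tilde I$ are independent of $\bX$, conditioning on them does not affect the Gaussian step below. After this conditioning the only remaining randomness is the Gaussian part of $\bX$ orthogonal to $\bw$: in logistic regression this is $\bX\bP$, which is independent of $(\bX\bbeta^*,\by)$ and has iid standard normal coordinates in any orthonormal basis of $\bw^\perp$, while in robust regression $\bbeta^*=\bm 0$ and the free variable is the whole matrix $\bX$, independent of $\by=(\eps_i)_i$. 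Both target quantities are continuously differentiable in $\bX$ by \Cref{lm:derivative_formula}, so the Gaussian Poincar\'e inequality together with the law of total variance gives $\E[(g-\bar\E[g])^2]\le \E[\|\nabla g\|^2]$ for $g$ either of the two statistics, where the gradient is taken in the free Gaussian directions; projecting the full $\bX$-gradient onto those directions only decreases its norm, so it suffices to bound the \emph{expected squared $\bX$-gradient}. Markov's inequality then turns each variance bound into the claimed $\Op$ rate.

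For the first display, take $g=\tbbeta^\top\bP\hbbeta$ and differentiate by the product rule and \eqref{eq:derivative_formula}, so that the contribution of $\partial\hbbeta/\partial x_{ij}$ is
\[
\tbbeta^\top\bP\,\frac{\partial\hbbeta}{\partial x_{ij}}
= \psi_i\,(\tbbeta^\top\bP\bA\be_j) - \hat\beta_j\,(\tbbeta^\top\bP\bA\bX^\top\bD\be_i),
\]
with a symmetric expression for the $\partial\tbbeta/\partial x_{ij}$ term. Summing the squares over $i,j$ produces $\|\bpsi\|^2\|\bA\bP\tbbeta\|^2$ and $\|\hbbeta\|^2\|\bD\bX\bA\bP\tbbeta\|^2$ (and their transposed analogues). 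The decisive feature is that each such term carries the factor $\bA=(\bX^\top\bD\bX)^{-1}$ with $\|\bA\|_{\oper}=\Op(1/n)$ by \eqref{eq:moment_bound}; combined with $\|\bpsi\|^2\le \C n$ and $\|\bD\|_{\oper}\le \C$ from \eqref{eq:as_bound} and $\|\bX\|_{\oper}^2=\Op(n)$, each term is $\Op(1/n)$. To promote this to $\E[\|\nabla g\|^2]\le \C/n$ I would apply H\"older's inequality across the dependent factors $\bA,\hbbeta,\bX,\bpsi$ (coming from the two different subsamples) together with the finite-moment bounds \eqref{eq:moment_bound} on $\|n\bA\|_{\oper}$ and $\|\hbbeta\|$ and the a.s.\ bounds \eqref{eq:as_bound}. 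This yields $g=\bar\E[g]+\Op(n^{-1/2})$.

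For the second display I treat $g=\bpsi^\top\tbpsi=\sum_{i\in I\cap\tilde I}\psi_i\tilde\psi_i$ identically, now using $\partial\bpsi/\partial x_{ij}=-\bD\bX\bA\be_j\psi_i-\bV\be_i\hat\beta_j$ from \eqref{eq:derivative_formula}. The dominant contribution to $\sum_{ij}(\partial g/\partial x_{ij})^2$ is $\|\hbbeta\|^2\|\bV\tbpsi\|^2$, which is only $\Op(n)$ rather than $\Op(1)$: here $\bV\preceq\bD$ gives $\|\bV\|_{\oper}\le\C$ and $\|\tbpsi\|^2\le\C n$, whereas the $\bD\bX\bA$-type terms remain $\Op(1)$ as before. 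Hence $\E[\|\nabla(\bpsi^\top\tbpsi)\|^2]\le \C n$, so $\bpsi^\top\tbpsi=\bar\E[\bpsi^\top\tbpsi]+\Op(n^{1/2})$; dividing by $|I\cap\tilde I|$, which is of exact order $n$ since $|I\cap\tilde I|/n\to^P \q^2>0$ by \Cref{rm:gepmetric_dist}, converts this into the stated $\Op(n^{-1/2})$.

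The main obstacle is calibrating the gradient bounds to the correct order rather than the Poincar\'e step itself. A naive Cauchy--Schwarz of the form $\sum_{ij}(\tbbeta^\top\bP\,\partial\hbbeta/\partial x_{ij})^2\le\|\tbbeta\|^2\sum_{ij}\|\partial(\bP\hbbeta)/\partial x_{ij}\|^2$ only gives $\Op(1)$ via \eqref{bound_frobenius_norm_derivatives} and is far too lossy; the extra factor $n^{-1}$ in the first display is recovered only by keeping the row vector $\tbbeta^\top\bP\bA$ intact and exploiting $\|\bA\|_{\oper}=\Op(1/n)$ (and, for the second display, by recognizing that it is the $\bV$-term, of size $\Op(n)$, that governs the rate). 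The second delicate point is upgrading these $\Op(\cdot)$ bounds to bounds on the corresponding expectations so that Poincar\'e applies; this is precisely what the modified M-estimator of \Cref{lem:reg} and its moment bounds \eqref{eq:moment_bound} are designed to supply, via H\"older's inequality across the factors originating from the two distinct subsamples.
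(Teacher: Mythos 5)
Your proposal matches the paper's proof essentially step for step: the conditional Gaussian Poincar\'e inequality with respect to the free Gaussian part $\bP\bX$, the product-rule expansion using the derivative formulas of \Cref{lm:derivative_formula}, the resulting bounds $\|\bpsi\|^2\|\bA^\top\bP\tbbeta\|^2+\|\hbbeta\|^2\|\tbbeta^\top\bP\bA\bX^\top\bD\|^2=\Op(n^{-1})$ for the first display and the $O(n)$ bound for $\sum_{ij}(\tbpsi^\top\partial\bpsi/\partial x_{ij})^2$ followed by division by $|I\cap\tilde I|\asymp \q^2 n$ for the second, with the moments controlled via \eqref{eq:as_bound}--\eqref{eq:moment_bound}. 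The only quibble is that the $\|\bpsi\|^2\|\tbpsi^\top\bD\bX\bA\|^2$ term in the second display is also $\Op(n)$, not $\Op(1)$, but this does not change the stated $O(n)$ total and hence the conclusion.
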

\begin{proof}
    First we show the concentration of $|\tilde{\bbeta}\bP\hat{\bbeta}|$. 
    By the Gaussian Poincar\'e inequality with respect to $\bP \bX$, we have
    \begin{align*}
        \bar{\E}\bigl[
            \bigl( \tilde{\bbeta}^\top\bP\hat{\bbeta}  - \bar{\E}\bigl[\tilde{\bbeta}^\top\bP\hat{\bbeta}\bigr]\bigr)^2
        \bigr] 
        &\le             \sum_{j=1}^p \sum_{i=1}^n  \bar{\E} \bigl[
           (\frac{\partial \tilde{\bbeta}^\top \bP\hat{\bbeta}}{\partial x_{ij}})^2
        \bigr]\le 2             \sum_{j=1}^p \sum_{i=1}^n 
        \bar{\E}\Bigl[
(\tilde{\bbeta}^\top\bP \frac{\partial \hat\bbeta}{\partial x_{ij}})^2 + (\hat{\bbeta}^\top\bP \frac{\partial \tilde\bbeta}{\partial x_{ij}})^2
        \Bigr].
    \end{align*}
    By the symmetry of $\tilde{\bbeta},\hat{\bbeta}$, it suffices to bound $\sum_{j=1}^p \sum_{i=1}^n \bar{\E}\Bigl[(\tilde{\bbeta}^\top\bP \frac{\partial \hat\bbeta}{\partial x_{ij}})^2\Bigr]$. Using the derivative formula and the upper bounds in \Cref{lm:derivative_formula}, 
        \begin{align*}
        \sum_{j=1}^p \sum_{i=1}^n (\tilde{\bbeta}^\top\bP \frac{\partial \hat\bbeta}{\partial x_{ij}})^2
        &\le 2 \Bigl(\|\bA^\top\bP\tilde{\bbeta}\|^2 \|\bpsi\|^2 + \|\tilde{\bbeta}^\top\bP \bA \bX^\top \bD\|^2 \|\bbeta\|^2\Bigr),
        \end{align*}
        and the moment of the RHS is $O(n^{-1})$. This concludes the proof of concentration for $|\tilde{\bbeta}\bP\hat{\bbeta}|$. For $\tilde{\bpsi}^\top \tilde\bpsi$, the same argument using the Gaussian Poincar\'e inequality gives
        \begin{align*}
            \bar{\E}\Bigl[\Bigl(
              \bpsi^\top \tilde{\bpsi} - \bar{\E}\bigl[
                    \bpsi^\top \tilde{\bpsi}
                \bigr]
            \Bigr)^2\Bigr] 
            \le  2\sum_{j=1}^p \sum_{i=1}^n   \bar{\E} \Bigl[(\tilde{\bpsi}^\top \frac{\partial  \bpsi}{\partial x_{ij}})^2 + (\bpsi^\top \frac{\partial \tilde\bpsi}{\partial x_{ij}})^2 \Bigr].  
        \end{align*}
        Using the derivative formula and the upper bounds in \Cref{lm:derivative_formula} again, 
        \begin{align*}
           \sum_{j=1}^p \sum_{i=1}^n  (\tilde{\bpsi}^\top \frac{\partial \bpsi}{\partial x_{ij}})^2 
            &\le 2 \Bigl(\|\tilde\bpsi^\top \bD \bX\bA\|^2\|\bpsi\|^2 + \|\tilde{\bpsi}^\top \bV \|^2 \|\hat{\bbeta}\|^2\Bigr),
        \end{align*}
        and the moment of the RHS is $O(n)$. This gives 
        $
                     \bpsi^\top \tilde{\bpsi} - \bar{\E}\bigl[
                    \bpsi^\top \tilde{\bpsi}
                \bigr]= \Op(n^{1/2}).
        $
        Finally, dividing by $|I\cap \tilde I|=n\q^2 + \op(n)$ (see \Cref{rm:gepmetric_dist}), we obtain the concentration of $\bpsi^\top \tilde{\bpsi}$. 
\end{proof}

\section{Conclusion}\label{sec:conclusion}
This paper investigates the asymptotic behavior of bagging unregularized M-estimator for robust and logistic regression under the proportional high-dimensional regime. In particular, we have derived the new nonlinear system equation characterizing the limit of the risk of bagging estimators, revealing how the sub-sample size impacts the performance of the bagging estimator. Throughout the analysis, we assumed that the sub-samples are drawn without replacement. A natural direction for future work is to consider more general weighting schemes, as studied in 
\cite{peter1995learning, clarte2024analysis, karoui2018can}. Of particular interest is the analysis of risk for ensemble methods such as bagging (where we sample with replacement), or other random weighting
schemes where the data-fitting loss for the estimator $\hat{b}_m$ for each $m\in [M]$ is given by $\sum_{i=1}^n w_{m,i} \ell_{y_i}(\bx_i^\top \bb)$, where 
weights $(w_{m,i})_{m\in [M], i\in [n]}$ are sampled independently of the data $(X, y)$. Example includes the iid Poisson weights $w_{m,i} \sim  \text{Poisson}(1)$ (i.i.d.) for each $m\in[M]$ and $i\in [n]$, and independent multinomial weights $(w_{m,1}, \dots w_{m, n}) \sim \text{Multinomial}(n, n, n^{-1})$ for each $m\in [M]$.

\bibliographystyle{alpha}
\bibliography{reference.bib}

\appendix

\section{Additional numerical simulation for robust regression}

\subsection{Other noise distribution}\label{subsec:change_noise_dist}
We change the noise distribution to a $t$-distribution with $\text{df}=3$ and conducted the same experiment as in \Cref{fig:huber}. The additional simulation result is presented in \Cref{fig:huber_df3}, which suggests that the scale of the noise plays the same role in this setting as well. 
\begin{figure}[htbp]
    \centering
    \begin{subfigure}[b]{0.49\textwidth}
        \centering
        \includegraphics[width=\textwidth]{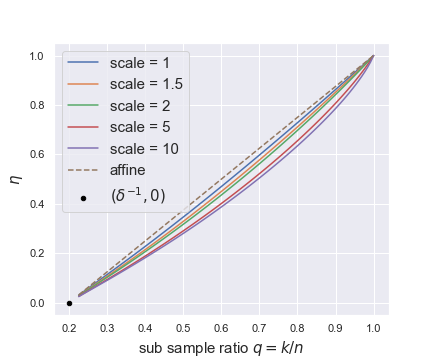}
    \end{subfigure}
    \begin{subfigure}[b]{0.49\textwidth}
        \centering
        \includegraphics[width=\textwidth]{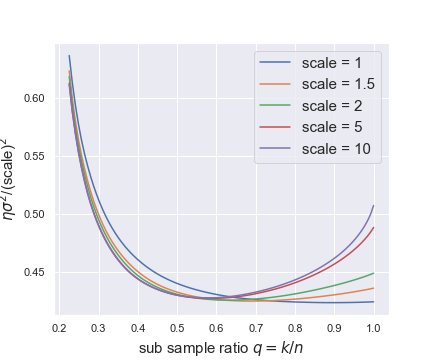}
    \end{subfigure}
    \begin{subfigure}[b]{0.49\textwidth}
        \centering
        \includegraphics[width=\textwidth]{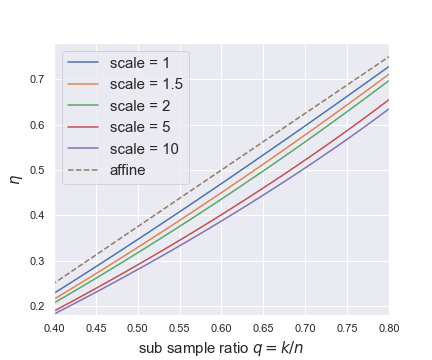}
    \end{subfigure}
    \begin{subfigure}[b]{0.49\textwidth}
        \centering
        \includegraphics[width=\textwidth]{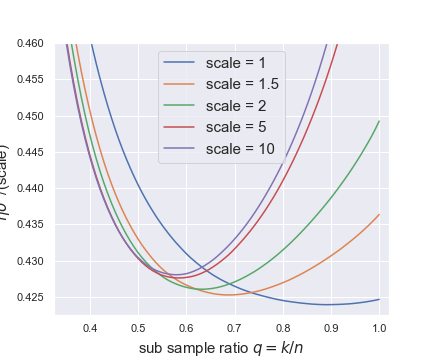}
    \end{subfigure}
    \caption{Plot of ${\q} \mapsto \eta$ and ${\q}\mapsto \sigma^2\eta$
        obtained by solving \eqref{eta_equation_robust} numerically.
        {Different noise distributions are given by} $(\text{scale})\times \text{t-dist (df=3)}$, for scale$\in\{1,{ 1.5, 2, 5,} 10\}$. The dashed line is the affine line $\q\mapsto (q-\delta^{-1})/(1-\delta^{-1})$. 
    {The bottom plots zoom in on a specific region of the top plots.}
    }
    \label{fig:huber_df3}
\end{figure}

\subsection{Pseudo Huber loss}\label{subsec:pseudo_huber}
We adopt the pseudo-Huber loss $\sqrt{1 + x^2}$, which satisfies \Cref{assum:robust}, and replicate the experiment shown in \Cref{fig:huber_compare}. The results, presented in \Cref{fig:pseudo_huber}, further support the validity of \Cref{thm:robust}.

\begin{figure}[htbp]
    \centering
    \begin{subfigure}[b]{0.49\textwidth}
        \centering
        \includegraphics[width=\textwidth]{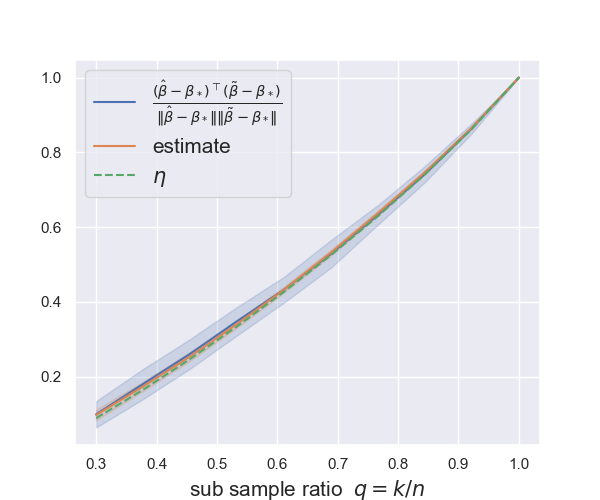}
    \end{subfigure}
    \begin{subfigure}[b]{0.49\textwidth}
        \centering
        \includegraphics[width=\textwidth]{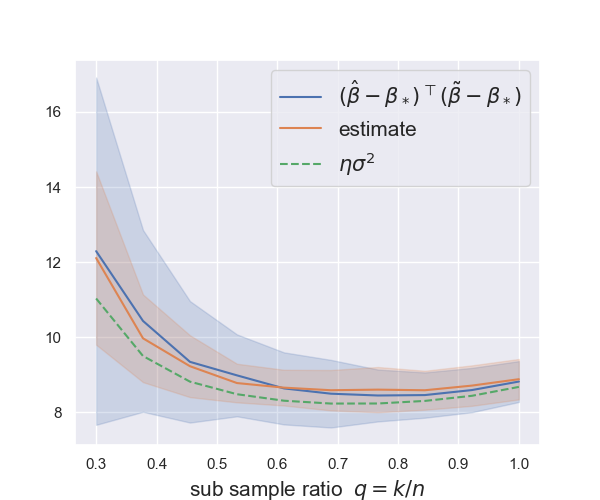}
    \end{subfigure}
    \caption{Comparison of simulation results, theoretical curves
            obtained by solving \eqref{eta_equation_robust} numerically,
    and estimate constructed by \eqref{eq:thm_estimation}, for the pseudo Huber loss $\rho(x)=\sqrt{1+x^2}$. 
     Here, the noise distribution is fixed to $4\times \text{t-dist(df=2)}$ and $(n, p)=(5000, 1000)$. The error bar is standard deviation with $10$ Monte Carlo simulations. 
     }
    \label{fig:pseudo_huber}
\end{figure}

\subsection{Small sample size experiments}
We conducted additional simulation about the robust regression for $n=500, 1000$. \Cref{fig:huber_compare_n_change} suggests that the correlation $(\hat{\bb}-\bbeta_*)^\top (\tilde{\bb}-\bbeta_*)/\|\hat{\bb}-\bbeta_*\|_2 \|\tilde{\bb}-\bbeta_*\|$ is still approximated well by the deterministic solution $\eta$ to the nonlinear system and the estimator \eqref{eq:thm_estimation}. 

\begin{figure}[htbp]
    \centering
    \begin{subfigure}[b]{0.49\textwidth}
        \centering
        \includegraphics[width=\textwidth]{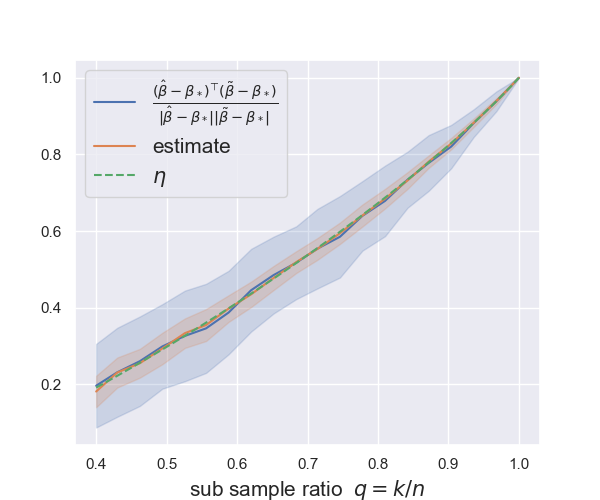}
    \end{subfigure}
    \begin{subfigure}[b]{0.49\textwidth}
        \centering
        \includegraphics[width=\textwidth]{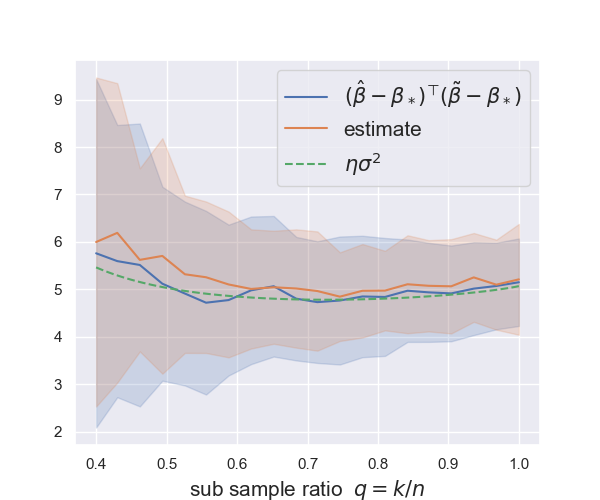}
    \end{subfigure}

    \begin{subfigure}[b]{0.49\textwidth}
        \centering
        \includegraphics[width=\textwidth]{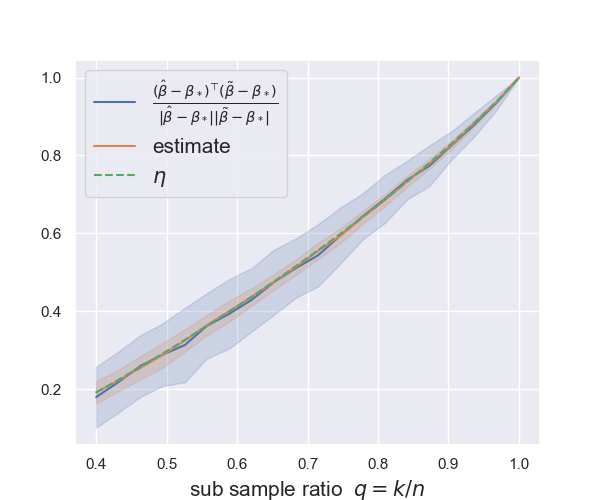}
    \end{subfigure}
    \begin{subfigure}[b]{0.49\textwidth}
        \centering
        \includegraphics[width=\textwidth]{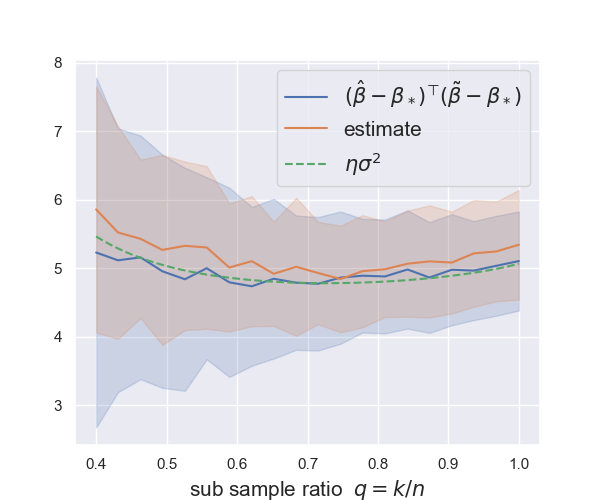}
    \end{subfigure}
    \caption{Comparison of simulation results, theoretical curves
            obtained by solving \eqref{eta_equation_robust} numerically,
    and estimate constructed by \eqref{eq:thm_estimation}. Here, the noise distribution is fixed to $3\times \text{t-dist(df=2)}$. 
     $(n, p)=(500, 100)$ in the top row and $(n, p)=(1000, 200)$ in the bottom row. The error bar is standard deviation with $100$ Monte Carlo simulation. 
     }
    \label{fig:huber_compare_n_change}
\end{figure}

\subsection{Universality}
We have added additional simulations in \Cref{fig:huber_compare_universality} to further examine the universality phenomenon, suggesting that \Cref{thm:robust} continues to hold across various non-Gaussian covariate distributions.

\begin{figure}[htbp]
    \centering
    \begin{subfigure}{0.3\textwidth}
        \centering
        \includegraphics[width=\textwidth]{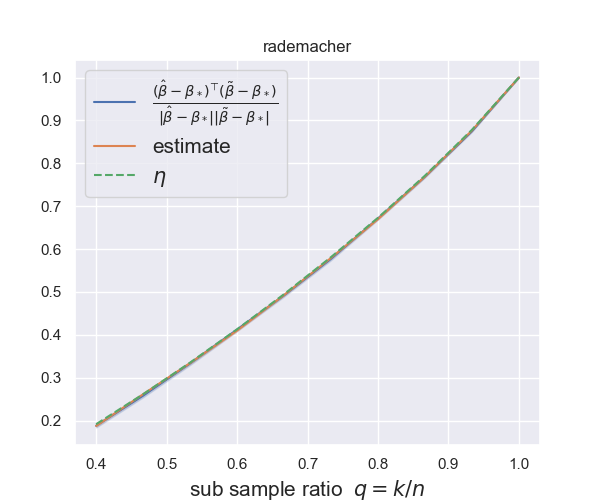}
    \end{subfigure}
    \hfill
    \begin{subfigure}{0.3\textwidth}
        \centering
        \includegraphics[width=\textwidth]{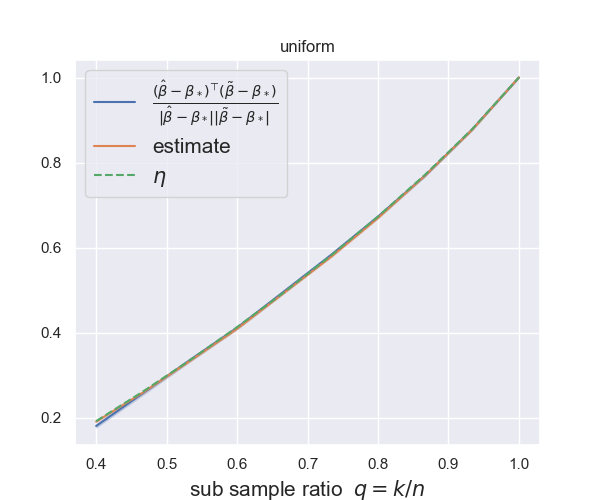}
    \end{subfigure}
    \hfill
    \begin{subfigure}{0.3\textwidth}
        \centering
        \includegraphics[width=\textwidth]{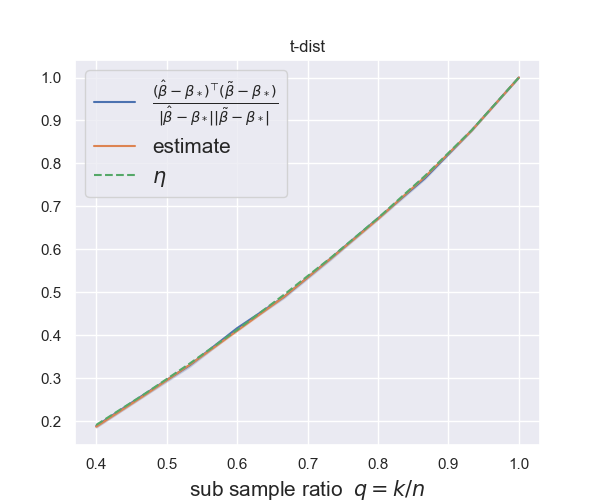}
    \end{subfigure}

    \begin{subfigure}{0.3\textwidth}
        \centering
        \includegraphics[width=\textwidth]{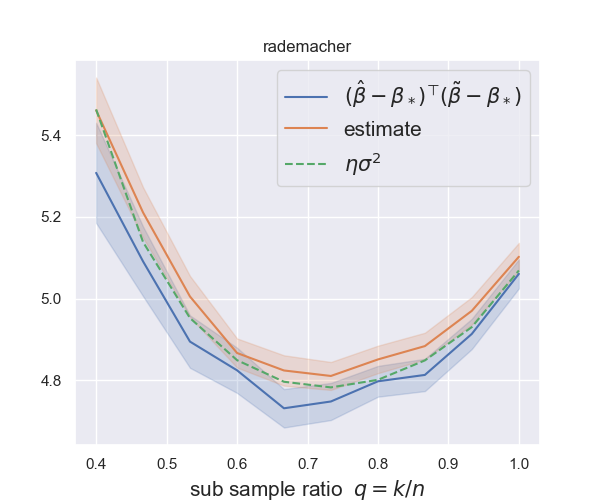}
    \end{subfigure}
    \hfill 
    \begin{subfigure}{0.3\textwidth}
        \centering
        \includegraphics[width=\textwidth]{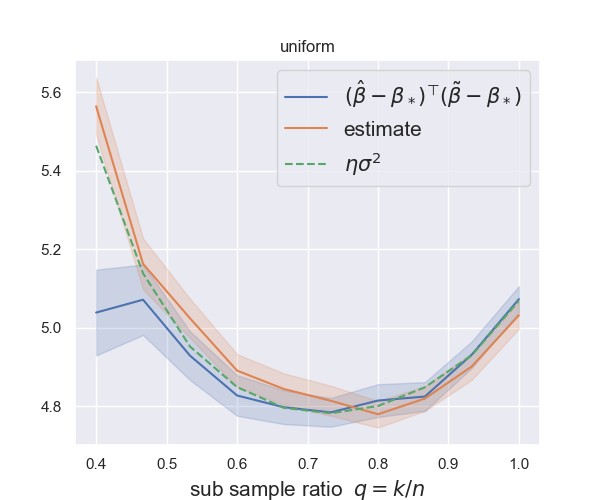}
    \end{subfigure}
    \hfill
    \begin{subfigure}{0.3\textwidth}
        \centering
        \includegraphics[width=\textwidth]{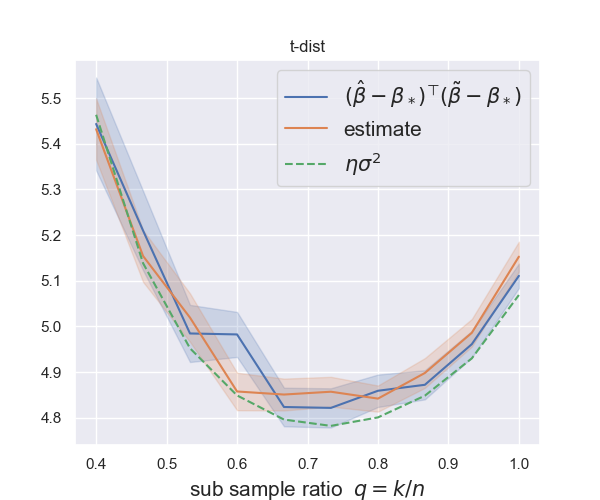}
    \end{subfigure}

    \caption{Comparison of simulation results, theoretical curves
            obtained by solving \eqref{eta_equation_robust} numerically,
    and estimate constructed by \eqref{eq:thm_estimation}. 
    The distribution of the covariate $X$ is set to Rademacher, Uniform, and t-distribution with $\text{df}=4$ (from left to right), normalized to match the first and second moments of $N(0,1)$. The sample size and feature dimension are fixed at 
    $(n, p)$ is fixed to $(5000, 1000)$, and the noise distribution follows t-distribution with $\text{df}=2$. 
    }
    \label{fig:huber_compare_universality}
\end{figure}

\section{Additional numerical simulation for logistic regression}\label{sec:additional_simulation_logit}
We examine the theoretical risk limit $\sigma^2 \eta$ obtained by \eqref{eta_equation_logi} for 
large aspect ratios $\delta = n/p \in \{15, 20, 25, 30\}$ across various signal strengths $|\bbeta_*| \in \{0, 0.1, 0.2, 0.3, 0.4\}$. As shown in \Cref{fig:logit_ushape_fixedpoint}, for $\delta > 20$, the risk curve in $q = k/n$ exhibits a U-shape, highlighting the benefit of subsampling for risk reduction.

\begin{figure}[htpb]
    \centering

    \includegraphics[width=\textwidth]{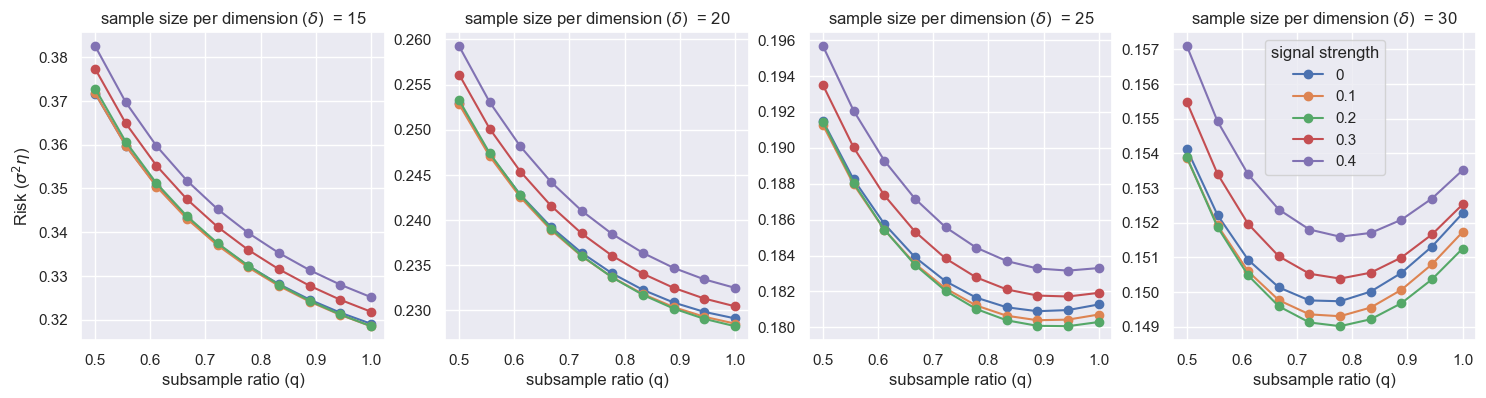}

    \caption{
        The theoretical curves of $\q\mapsto \sigma^2 \eta$
        obtained by solving \eqref{eta_equation_logi} numerically for varying values of the aspect ratio $\delta (=\lim n/p)$ and signal strength $\|\bbeta_*\|$.
        }
    \label{fig:logit_ushape_fixedpoint}
\end{figure}

\end{document}